\tikzstyle{decision} = [diamond, minimum width=3cm, minimum height=1cm, text centered, draw=black]
\tikzstyle{leaf} = [circle, radius=3pt, draw=black, fill=black]
\tikzstyle{arrow} = [thick,->,>=stealth]
\journal{arXiv}
\newcommand{\argmax}{\operatornamewithlimits{argmax}}
\newcommand{\argmin}{\operatornamewithlimits{argmin}}
\newcommand{\prob}{\text{Prob}}
\newcommand{\defeq}{\stackrel{\text{def}}{=}}
\newcommand{\setR}{\mathbb{R}}
\newcommand{\setN}{\mathbb{N}}
\newcommand{\eucNorm}[1]{\| #1 \|}
\newcommand{\RN}[2]{\frac{\mathrm{d}#1}{\mathrm{d}#2}}
\newcommand{\randHat}{\boldsymbol{\hat{\theta}}}
\newcommand{\supp}{\mathop{\mathrm{supp}}}
\newtheorem{thm}{Theorem}[section]
\newtheorem{lemma}[thm]{Lemma}
\newtheorem{cor}{Corollary}[thm]
\theoremstyle{definition}
\newtheorem{defi}[thm]{Definition}
\begin{document}
\begin{frontmatter}
\title{A taxonomy of estimator consistency on discrete estimation problems}
\author[otzma,monash]{Michael Brand}
\ead{research@OtzmaAnalytics.com}
\author[monash]{Thomas Hendrey}
\ead{thomas.hendrey@monash.edu}
\address[otzma]{Otzma Analytics Pty Ltd, Bentleigh East, VIC 3165, Australia}
\address[monash]{Faculty of IT (Clayton), Monash University, Clayton, VIC 3800, Australia}
\begin{abstract}
We describe a four-level hierarchy mapping both all discrete estimation
problems and all estimators on these problems, such that the hierarchy
describes each estimator's consistency guarantees on each problem class.
We show that no estimator is consistent for all estimation problems, but that
some estimators, such as Maximum A Posteriori, are consistent for the
widest possible class of discrete estimation problems. For Maximum Likelihood
and Approximate Maximum Likelihood estimators we show that they do not
provide consistency on as wide a class, but define a sub-class of problems
characterised by their consistency. Lastly, we show that some popular
estimators, specifically Strict Minimum Message Length, do not provide
consistency guarantees even within the sub-class.

\end{abstract}

\end{frontmatter}

\section{Introduction}\label{Sec:Intro}
Consistency has been studied extensively as a desirable property of estimators by proponents of diverse approaches to point estimation. For instance there is an extensive literature investigating assumptions sufficient to assure the consistency of maximum likelihood estimators of which  \cite{Doob1934} is the first, \cite{Wald1949} is the classic source, \cite{Perlman1972} provides a good summary and \cite{SeoLindsay2013} is a more recent development. On the Bayesian side the related issue of ``posterior consistency'' has an extensive literature beginning with \cite{Doob1949} and summarised in \cite{ghosal1997review}. This was related to the consistency of Bayes estimators in \cite{Schwartz1965}, and consistency of Bayes estimators was more directly analysed in \cite{DiaconisFreedman1986}. More recently Dowe \cite{DoweBaxterOliverWallace1998,Dowe2011mml} conjectured that under certain conditions only Bayesian estimation methods would be consistent.

The statistical notion of consistency is typically discussed in the context of a given estimation method and a single problem or class of problems (as in \cite[][Section 6]{Schwartz1965} and \cite{dowe1997resolving}).
By contrast, in this paper we show that estimators fall into natural classes
in terms of the properties they require of an estimation problem in order to
ensure consistency. This allows us to map any estimator according to the
class it belongs to, and, in turn, also to map any estimation problem
according to the same classification, by the properties it exhibits.

Specifically, we define three nested classes of estimation problems. The
widest class is those including all estimation problems. (Here, ``all
estimation problems'' refers to all discrete estimation problems, i.e.
to estimation problems for which both the parameter space and the observation
space are countable. Throughout this paper, this will be the scope of
the discussion.) Smaller than the class of all problems is the
class of problems having \emph{convergent likelihood ratios}. And the smallest
is the class of problems having \emph{distinctive likelihoods}. An estimator
is classified according to the largest of these classes within which it
guarantees consistency for any problem. Thus, we describe estimator
consistency as a hierarchy: at level zero of the hierarchy
are the estimators consistent for
all problems, at level one those that are not at level zero but are
consistent for all problems with convergent likelihood ratios (We call
such estimators \emph{properly consistent}), at level two are those
estimators that are not at level one but are consistent for all problems
with distinctive likelihoods (We call such estimators
\emph{likelihood consistent}), and at level three are
those estimators for which not even
the property of distinctive likelihoods is a guarantee of consistency.

In the paper, we provide several characterisations to each of the problem
classes, taking both Bayesian and frequentist approaches. We then provide
examples for popular estimators and for wide estimator classes mapped into
each level, proving both their consistency within the required problem
class and their inconsistency within the next-largest problem class.

Specifically, we show that
\begin{enumerate}
\item No estimator resides in level zero of the hierarchy, and that, in fact,
no estimator can be consistent for any estimation problem outside the class
of those with convergent likelihood ratios,
\item Maximum A Posteriori (MAP) resides in level one of the hierarchy, being
consistent over all problems that have any consistent estimators, and so do
some frequentist estimators, but not Maximum Likelihood (ML),
\item Maximum Likelihood (ML) estimation resides in level two of the
hierarchy, being consistent for all problems with distinctive likelihoods, and
\item Strict Minimum Message Length (SMML) estimation resides in level three of
the hierarchy, not guaranteeing consistency even for problems with
distinctive likelihoods.
\end{enumerate}

The last result, namely the result regarding SMML, resolves in the negative a
conjecture by Dowe \cite{DoweBaxterOliverWallace1998, Dowe2011mml} to the
effect that Minimum Message Length estimators are consistent on more general
classes of problems than alternatives such as Maximum Likelihood, and
frequentist estimation methods in general.

The remaining parts of this paper are organised as follows.
In Section~\ref{Sec:Def}, we provide general definitions regarding the
point estimation problem, precisely scoping our present results.
In Section~\ref{Sec:PropCon}, we define the class of estimation problems
with convergent likelihood ratios, the corresponding class of
properly consistent
estimators, provide examples, and show that no estimation problem without
convergent likelihood ratios has consistent estimators at all.
In Section~\ref{S:LC}, we define the class of estimation problems with
distinctive likelihoods, the corresponding class of likelihood consistent
estimators, and provide examples.
Lastly, in Section~\ref{S:SMML}, we analyse the Strict Minimum Message Length
estimator, and show an estimation problem with distinctive likelihoods for
which it is provably not consistent.

\section{Definitions}\label{Sec:Def}
\subsection{Estimation Problems}\label{Sec:EstProb}
Point estimation concerns the problem of inferring a single point estimate $\hat{\theta}$ of an unknown parameter of a model given observations generated from the model. Point estimation problems are divided into \emph{small sample problems} (where the task is to make an estimate given a fixed, finite number of observations), and large sample problems (where the task is to provide estimates which have good properties in the limit of infinite observations). While consistency is a concern only in large sample problems, we first introduce small sample problems as they are the most natural context for defining most of the estimators we shall be interested in.

\begin{defi}\label{D:classsmallProblem}
A \emph{discrete, small sample, classical estimation problem} is a tuple $\langle X,\Theta,\mathscr{P}\rangle$ where:
\begin{itemize}
\item $X$ is called the \emph{observation space}, and is a finite product of countable subsets of some Euclidean space, representing possible finite sequences of observations that might be observed.
\item $\Theta$ is called the \emph{parameter space}, and is a countable subset of a Euclidean space representing possible values of the unknown parameter.
\item $\mathscr{P}=(P_\theta)_{\theta\in\Theta}$ is a set of distinct probability distributions over $X$ indexed by $\Theta$, where $P_\theta$ represents the probability distribution over the observations implied by the model with parameter value $\theta$.
\end{itemize}
\end{defi}

The intended interpretation is that we receive an observation (which is a term we will also use when describing a finite sequence of observations), from which we make our estimate, $\hat{\theta}$, of $\theta$. The observation is modelled as a random variable $\mathbf{x}$ whose distribution is $P_\theta$, which depends on the true value of the unknown parameter.

The classical approach to point estimation assumes no prior information regarding the value of the unknown parameter value $\theta$ except for its set of possible values. This is appropriate when one is evaluating ``frequentist'' approaches which make no use of any prior information, or when considering properties such as bias or variance of estimators. In order to evaluate Bayesian approaches in the same framework, one must assume that the prior is selected by the statistician when selecting an estimation method. This is incongruous with (mainstream) Bayesian philosophy, since according to this philosophy the prior distribution should capture the statistician's prior knowledge about the unknown parameter, and therefore the statistician is not free to choose this prior arbitrarily. Such a Bayesian, therefore, should treat the prior as given in the estimation problem, instead of something to be determined freely in choosing an estimation method. Therefore in this paper we shall consider what we call \emph{Bayesian} estimation problems:

\begin{defi}\label{D:bayessmallProblemtup}
A \emph{discrete, small sample, Bayesian estimation problem} is a tuple,\break
$\langle X,\Theta,\mathscr{P},\Pi\rangle$, where $X$, $\Theta$ and $\mathscr{P}$ are defined as in Definition \ref{D:classsmallProblem}, and $\Pi$ is a probability distribution (called the \emph{prior distribution}) over $\Theta$ such that $\Pi(\{\theta\})>0$  for all $\theta\in\Theta$.
\end{defi}

The addition of the prior distribution allows us to treat the value of the unknown parameter as a random variable $\boldsymbol{\theta}$, distributed according to $\Pi$. Note that we assume that the prior distribution is everywhere positive, reflecting the informal notion of $\Theta$ as the set of values that the statistician considers possible values for the unknown parameter. Other than this we make no assumptions about the form of the prior distribution.

The distributions in $\mathscr{P}$ can then be thought of as conditional distributions of $\mathbf{x}$, conditioned on different possible values of $\boldsymbol{\theta}$. As a result we can alternatively, and more simply, conceive of a large sample Bayesian estimation problem as being characterised by the joint distribution on the pair of random variables $(\boldsymbol{\theta},\mathbf{x})$. That is, the following definition is equivalent to Definition \ref{D:bayessmallProblemtup}, and is the one we shall mostly use:\footnote{Note that, when $\Theta$ is uncountable these two definitions are no longer always equivalent, since the conditional probability $\prob(\cdot|\boldsymbol{\theta}=\theta)$ is not necessarily uniquely defined by the joint distribution in that case.}

\begin{defi}\label{D:bayessmallProblemrv}
A \emph{discrete, small sample, Bayesian estimation problem} is a pair of random variables $(\boldsymbol{\theta},\mathbf{x})$, with a given joint probability distribution, with $\boldsymbol{\theta}$ ranging over $\Theta$ and $\mathbf{x}$ ranging over $X$, with $\supp(\boldsymbol{\theta})=\Theta$, and with $\Theta$ and $X$ possessing the same properties as in Definition \ref{D:bayessmallProblemtup}.
\end{defi}
  
While estimators are simplest to define in the small sample case, in this paper we are interested in the asymptotic behaviour of estimators in the limit of infinite data. We shall therefore be concerned with what we shall call \emph{discrete, large sample Bayesian estimation problems}:

\begin{defi}\label{D:bayesProblem}
A \emph{discrete, large sample Bayesian estimation problem} is a tuple \linebreak $\langle (X_n)_{n\in\setN}, \Theta, \mathscr{P},\Pi\rangle$ where each $X_n$ is a countable subset of some Euclidean space, and $X\defeq\prod_{n=1}^\infty X_n$ is the \emph{observation space} (in this case the infinite Cartesian product of the $X_n$), $\mathscr{P}=(P_\theta)_{\theta\in\Theta}$ is a set of distinct probability distributions over $X$ with the associated $\sigma$-algebra being the smallest $\sigma$-algebra containing all \emph{open cylinders} ($C_i[x]\defeq\{x'\in X: x'_i=x\}$, $i\in\setN$, $x\in X_i$), and $\Pi$ satisfies the same conditions as in Definition \ref{D:bayessmallProblemtup}. Alternatively a discrete, large sample Bayesian estimation problem is a pair of random variables $(\boldsymbol{\theta},\mathbf{x})$ as in Definition \ref{D:bayessmallProblemrv} with $\mathbf{x}$ ranging over an observation space which is the product of a countably infinite sequence of countable non-empty subsets of a Euclidean space, where the factors of the product are indexed by $\setN$.
\end{defi}

The intended interpretation is that the statistician will receive a sequence of observations, with the $i^{\text{th}}$ observation being a member of $X_i$. The observations form a sequence of random variables $\mathbf{x}=(\mathbf{x}_i)_{i\in\setN}$ whose distribution depends on the unknown value $\boldsymbol{\theta}$. Note that we have put no restrictions on the form of the distributions on $X$, and so in particular we do not enforce the common requirement that the observations be independent and identically distributed (i.i.d.).

All the problems we consider from this point forward will be both discrete and Bayesian, so we shall leave out these qualifiers, instead dividing problems only into small sample or large sample estimation problems. In both cases we shall use $\prob()$ when referring to probabilities defined in terms of the joint probability distribution referred to in Definitions \ref{D:bayessmallProblemrv} and \ref{D:bayesProblem}. For instance, we may write $\prob(\mathbf{x}=x|\boldsymbol{\theta}=\theta)$ instead of $P_\theta(x)$. 

Note that in all estimation problems we have required $\Theta$ and $X_i$ to be subsets of some Euclidean space. We do this solely to make our results more comprehensible due to the general familiarity with Euclidean spaces. All that is required for the results that follow is that $\Theta$ is a first countable, $T_1$ space and that all subsets of $\Theta$ and $X_i$ are measurable.

Finally, we shall need some additional notation for discussing individual observations or finite sequences of observations in large sample estimation problems:
\begin{itemize}
\item $x_{m:n}\defeq\langle x_m,x_{m+1},\ldots,x_n\rangle$. 
\item $P_\theta^{(n)}(x) \defeq \prob(\mathbf{x}_n=x|\boldsymbol{\theta}=\theta)$
\item $P_\theta^{(m:n)}(x_{m:n})\defeq\prob(\mathbf{x}_{m:n}=x_{m:n}|\boldsymbol{\theta}=\theta)$. 
\end{itemize}

\subsection{Estimators}\label{Sec:Est}
 An \emph{estimator} for a small sample estimation problem is a function $\hat{\theta}:X\rightarrow \Theta$, where $\hat{\theta}(x)$ represents a `best guess' as to the true value of $\boldsymbol{\theta}$, when given the observation $x$. Since the observation $\mathbf{x}$ is a random variable, $\hat{\theta}(\mathbf{x})$ is itself a random variable which we will denote $\randHat$. An estimator for a large sample estimation problem is a sequence of estimators $(\hat{\theta}_n)_{n\in\setN}$ for the sequence of small sample estimation problems $(T_n)_{n\in\setN}$ where $T_n=(\boldsymbol{\theta},\mathbf{x}_{1:n})$. 
 
 Estimators are thus defined relative to a given estimation problem, while we are interested in comparing methods of point estimation over whole classes of estimation problems. We thus require the notion of \emph{estimator classes} which are simply sets of estimators defined over broad classes of estimation problems. For most estimator classes we consider we will define membership explicitly only for small sample estimation problems. In such cases estimators on large sample problems will be considered members of a given estimator class if the sequence of estimators $(\hat{\theta}_n)_{n\in\setN}$ on the small sample problems $(\boldsymbol{\theta},\mathbf{x}_{1:n})_{n\in\setN}$ is eventually in the class of small sample estimators so defined. 
 
 As an example, perhaps the most well-studied class of estimators and one of the main classes we shall consider is the \emph{Maximum Likelihood (ML) estimator class}:
 
  \begin{defi}\label{D:ML}
 An estimator $\hat{\theta}$ for estimation problem $(\boldsymbol{\theta},\mathbf{x})$ is a \emph{Maximum Likelihood (ML) estimator} and so a member of \emph{the ML estimator class}  if, for all $x\in X$ it satisfies the equation:
 \begin{equation}\label{eq:ML}
 \hat{\theta}(x)=\argmax_{\theta\in\Theta}P_\theta(x).
 \end{equation}
 \end{defi}
 
 The ML estimator class is what we shall call a \emph{frequentist} estimator class, since whether an estimator on a problem $(\boldsymbol{\theta},\mathbf{x})$ is a member of the class can be determined independently of the prior distribution on $\boldsymbol{\theta}$. We shall also later introduce \emph{Bayesian} estimator classes whose membership does depend on the prior.
 
\subsection{Consistency}\label{Sec:Cons}
While our estimation problems are Bayesian, our evaluative criterion is the frequentist one of \emph{consistency}

\begin{defi}\label{D:cons1} An estimator $(\hat{\theta}_n)_{n\in\setN}$ is \emph{consistent at $\theta\in\Theta$} if, when $\mathbf{x}$ is distributed according to $P_\theta$, the sequence $(\randHat_n)_{n\in\setN}$ converges in probability to $\theta$. That is if for every neighbourhood $U$ of $\theta$,

\[\lim_{n\to\infty}\prob(\randHat_n\in U|\boldsymbol{\theta}=\theta)=1.\]

An estimator is \emph{consistent} for an estimation problem if it is consistent for all $\theta\in\Theta$.
\end{defi}

\begin{defi}\label{D:consstrong} An estimator $(\hat{\theta}_n)_{n\in\mathbb{N}}$ is \emph{strongly consistent at $\theta\in\Theta$} if, when $\mathbf{x}$ is distributed according to $P_\theta$, the sequence $(\randHat_n)_{n\in\setN}$ converges almost surely to $\theta$. That is if, 

\[\prob(\lim_{n\to\infty}\randHat=\theta|\boldsymbol{\theta}=\theta)=1.\]

If an estimator is \emph{strongly consistent} at all $\theta\in\Theta$ then it is \emph{strongly consistent}.
\end{defi}

In this paper however, we are not primarily interested in the consistency of individual estimators, but rather of estimator classes. Ideally we would define an estimator class as (strongly) consistent if every estimator within the class is (strongly) consistent. However, since our definition of estimation problems in Section \ref{Sec:EstProb} made no assumptions regarding the relationship between the sequence of observations $\mathbf{x}$ and the true value of $\boldsymbol{\theta}$, there can be no guarantee in general that a consistent estimator exists for a given estimation problem. Thus estimator classes can only guarantee consistency on  subclasses of the class of all estimation problems. We will therefore define consistency of estimator classes relative to a given class of large sample estimation problems:

\begin{defi}\label{D:classcons}
Estimator class $\mathscr{E}$ is \emph{(strongly) consistent over a class of estimation problems $\mathscr{C}$} if, for every estimation problem $T\in\mathscr{C}$, there exists at least one estimator in $\mathscr{E}$ for $T$ and all such estimators are (strongly) consistent on $T$.
\end{defi}

Note, however, that an estimator class may fail to contain estimators for a given estimation problem (for instance there may be no maximum likelihood estimator for a problem if the likelihood function for infinitely many $n$ fails to attain its supremum). We therefore also define partial consistency:

\begin{defi}\label{D:partcons}
Estimator class $\mathscr{E}$ is \emph{partially (strongly) consistent over class of estimation problems $\mathscr{C}$} if, for every estimation problem $\mathbb{P}\in\mathscr{C}$, all estimators in $\mathscr{E}$ for $\mathbb{P}$ are (strongly) consistent on $\mathbb{P}$.
\end{defi} 

\section{Proper Consistency}\label{Sec:PropCon}
In this section we introduce our broadest class of estimation problems - those with \emph{consistent posteriors}. The primary results of this section will show that a consistent posterior is a necessary condition for a problem to have consistent estimators, and that it is also a sufficient condition for an estimation problem to have strongly consistent estimators. An estimator class which is consistent over all problems with consistent posteriors we therefore call \emph{properly consistent}. We will then introduce Bayes estimators and show that Bayes estimator classes whose associated loss functions have a property we call \emph{discernment} are properly consistent. 

To conclude this section we will consider proper consistency from a frequentist perspective. We will show that the existence of a consistent posterior is equivalent to the frequentist property of having convergent likelihood ratios. This entails the existence of properly consistent frequentist estimator classes, however we will show that the Maximum Likelihood estimator class is not properly consistent.\\

\subsection{Posterior Consistency}\label{Sec:PostCons}

\begin{defi}\label{D:conspost}
\cite{ghosal1997review} An estimation problem $(\boldsymbol{\theta},\mathbf{x})$ is said to have a
\emph{consistent posterior} if, for every $\theta\in\Theta$, for every neighbourhood $U$ of $\theta$
\begin{equation}\label{Eq:conspost}
\lim_{n\to\infty} \prob(\boldsymbol{\theta}\in U|\mathbf{x}_{1:n})=1\,a.s.
\end{equation}
when $\mathbf{x}$ is distributed according to $P_\theta$. A problem without a consistent posterior is said to have an \emph{inconsistent posterior}.\footnote{Note that a probability conditioned on a random variable is itself a random variable.}
\end{defi} 

That is an estimation problem has a consistent posterior if the Bayesian posterior will, in the limit, concentrate all probability mass on regions about the true value with probability 1.  Since we only consider the discrete case in this paper, Definition \ref{D:conspost} can be simplified using L\'{e}vy's upward theorem to show that a consistent posterior will in fact concentrate all probability mass precisely on the true value.

\begin{lemma}[L\'{e}vy's upward theorem]\label{L:levy} \cite[sec. 14.2]{Williams1991} If $\mathbf{z}$ is a real random variable such that $E(|\mathbf{z}|)<\infty$, and if $(\mathbf{y}_n)_{n\in\mathbb{N}}$ is a random process and $\mathbf{y}=(\mathbf{y}_{1:\infty})$, then 
\[\lim_{n\to\infty}E(\mathbf{z}|\mathbf{y}_{1:n})=E(\mathbf{z}|\mathbf{y})\,a.s.\]
\end{lemma}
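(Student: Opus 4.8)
The plan is to recognise the sequence $M_n \defeq E(\mathbf{z}\mid\mathbf{y}_{1:n})$ as a uniformly integrable martingale with respect to the filtration $\mathcal{F}_n\defeq\sigma(\mathbf{y}_{1:n})$, invoke the martingale convergence theorem to get an almost-sure and $L^1$ limit $M_\infty$, and then identify $M_\infty$ with $E(\mathbf{z}\mid\mathbf{y})=E(\mathbf{z}\mid\mathcal{F}_\infty)$, where $\mathcal{F}_\infty\defeq\sigma\!\left(\bigcup_n\mathcal{F}_n\right)$.

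First I would check that $(M_n,\mathcal{F}_n)_{n\in\setN}$ is a martingale: integrability of each $M_n$ follows from $E(|\mathbf{z}|)<\infty$ via Jensen's inequality for conditional expectation, and the martingale identity $E(M_{n+1}\mid\mathcal{F}_n)=M_n$ is just the tower property, since $\mathcal{F}_n\subseteq\mathcal{F}_{n+1}$. Second, I would establish uniform integrability of $\{M_n\}_{n\in\setN}$. This is the standard fact that conditional expectations of a fixed integrable variable form a uniformly integrable family: one has $|M_n|\le E(|\mathbf{z}|\mid\mathcal{F}_n)$, and then bounds $E\!\left(|M_n|\mathbf{1}_{\{|M_n|>K\}}\right)\le E\!\left(|\mathbf{z}|\mathbf{1}_{\{|M_n|>K\}}\right)$, controlling $\prob(|M_n|>K)\le E(|\mathbf{z}|)/K$ uniformly in $n$ by Markov's inequality and using absolute continuity of the integral of $|\mathbf{z}|$ to make the right-hand side small, uniformly in $n$, as $K\to\infty$. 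With uniform integrability in hand, the martingale convergence theorem (proved via the upcrossing inequality, which I would cite rather than reprove) gives a random variable $M_\infty$ with $M_n\to M_\infty$ both almost surely and in $L^1$.

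The remaining and most delicate step is to show $M_\infty = E(\mathbf{z}\mid\mathcal{F}_\infty)$ almost surely. Measurability of $M_\infty$ with respect to $\mathcal{F}_\infty$ is immediate, being an almost-sure limit of $\mathcal{F}_\infty$-measurable functions. For the defining averaging property I would first verify $E(M_\infty\mathbf{1}_A)=E(\mathbf{z}\mathbf{1}_A)$ for every $A\in\bigcup_n\mathcal{F}_n$: if $A\in\mathcal{F}_m$, then for all $n\ge m$ we have $E(M_n\mathbf{1}_A)=E(\mathbf{z}\mathbf{1}_A)$ by the definition of conditional expectation, and $L^1$ convergence lets me pass to the limit on the left, since $|E(M_n\mathbf{1}_A)-E(M_\infty\mathbf{1}_A)|\le E|M_n-M_\infty|\to 0$. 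Since $\bigcup_n\mathcal{F}_n$ is a $\pi$-system generating $\mathcal{F}_\infty$, a Dynkin (monotone-class) argument extends the identity $E(M_\infty\mathbf{1}_A)=E(\mathbf{z}\mathbf{1}_A)$ to all $A\in\mathcal{F}_\infty$, which together with the measurability gives $M_\infty=E(\mathbf{z}\mid\mathcal{F}_\infty)=E(\mathbf{z}\mid\mathbf{y})$ a.s., as claimed. I expect this final identification — getting the $\pi$-system/Dynkin extension right and correctly matching $\mathcal{F}_\infty$ with $\sigma(\mathbf{y})$ — to be the main obstacle, with the uniform-integrability estimate a close second; the underlying martingale convergence theorem I would treat as a black box, citing \cite[sec. 14.2]{Williams1991}.
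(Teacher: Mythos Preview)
Your proof outline is correct and is essentially the standard textbook argument for L\'{e}vy's upward theorem --- indeed it is the proof given in \cite[sec.~14.2]{Williams1991}, which the paper cites. However, the paper does not actually prove this lemma: it is stated as a known result with a citation and no proof, and is then used as a tool in the proofs of Lemmas~\ref{L:disconspost}, \ref{L:impcons}, and \ref{L:freqcons}. So there is nothing in the paper to compare your proposal against; you have supplied a (correct) proof where the paper simply invokes the literature.
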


\begin{lemma}\label{L:disconspost}
For every estimation problem $T=(\boldsymbol{\theta},\mathbf{x})$, $T$ has a consistent posterior if and only if, for every $\theta\in\Theta$:
\begin{equation}\label{Eq:disconspost}
\lim_{n\to\infty} \prob(\boldsymbol{\theta}=\theta|\mathbf{x}_{1:n})=1\,a.s.
\end{equation}
when $\mathbf{x}$ is distributed according to $P_\theta$.
\end{lemma}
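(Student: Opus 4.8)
The plan is to establish the two directions separately; the ``if'' direction is immediate, while the ``only if'' direction is where the work lies and is exactly what Lévy's upward theorem (Lemma~\ref{L:levy}) is for.

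\emph{The ``if'' direction.} Assume \eqref{Eq:disconspost} holds at every $\theta$. Fix $\theta$ and a neighbourhood $U$ of $\theta$. Since $\theta\in U$, for every $n$ we have $\prob(\boldsymbol{\theta}=\theta\mid\mathbf{x}_{1:n})\le\prob(\boldsymbol{\theta}\in U\mid\mathbf{x}_{1:n})\le 1$, so $\prob(\boldsymbol{\theta}\in U\mid\mathbf{x}_{1:n})$ is squeezed between a sequence converging to $1$ a.s.\ (under $P_\theta$) and the constant $1$; hence it converges to $1$ a.s., which is \eqref{Eq:conspost}. No extra machinery is needed here.

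\emph{The ``only if'' direction.} Fix $\theta$. Using that $\Theta$ is first countable and $T_1$, I would first pick a decreasing sequence $(U_k)_{k\in\setN}$ of open neighbourhoods of $\theta$ that is a neighbourhood basis at $\theta$, and note $\bigcap_k U_k=\{\theta\}$ (any $\theta'\ne\theta$ is separated from $\theta$ by an open set, which contains some basic $U_k$, so $\theta'\notin U_k$). For each fixed $k$, apply Lemma~\ref{L:levy} with $\mathbf{z}=\mathbf{1}[\boldsymbol{\theta}\in U_k]$ (which satisfies $\E|\mathbf{z}|\le 1<\infty$) and $\mathbf{y}_n=\mathbf{x}_n$; since the $\sigma$-algebra generated by $\mathbf{x}_{1:n}$ increases to the one generated by $\mathbf{x}$, this gives $\prob(\boldsymbol{\theta}\in U_k\mid\mathbf{x}_{1:n})\to\prob(\boldsymbol{\theta}\in U_k\mid\mathbf{x})$ a.s. The consistent-posterior hypothesis says $\prob(\boldsymbol{\theta}\in U_k\mid\mathbf{x}_{1:n})\to 1$ a.s.\ under $P_\theta$, so the two a.s.\ limits must agree: $\prob(\boldsymbol{\theta}\in U_k\mid\mathbf{x})=1$ a.s.\ under $P_\theta$, for every $k$. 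Intersecting over the countably many $k$ and using continuity from above of the (discrete, regular) conditional distribution $\prob(\,\cdot\mid\mathbf{x})$, $\prob(\boldsymbol{\theta}=\theta\mid\mathbf{x})=\inf_k\prob(\boldsymbol{\theta}\in U_k\mid\mathbf{x})=1$ a.s.\ under $P_\theta$. Finally, apply Lemma~\ref{L:levy} once more with $\mathbf{z}=\mathbf{1}[\boldsymbol{\theta}=\theta]$ to get $\prob(\boldsymbol{\theta}=\theta\mid\mathbf{x}_{1:n})\to\prob(\boldsymbol{\theta}=\theta\mid\mathbf{x})$ a.s.; combined with the previous identity this yields \eqref{Eq:disconspost}.

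The main obstacle --- and the reason Lemma~\ref{L:levy} is genuinely required rather than a convenience --- is that one cannot interchange $\lim_n$ with the intersection over the neighbourhood basis at finite $n$: from $\prob(\boldsymbol{\theta}\in U_k\mid\mathbf{x}_{1:n})\to 1$ for each $k$ one cannot directly conclude $\prob(\boldsymbol{\theta}=\theta\mid\mathbf{x}_{1:n})=\inf_k\prob(\boldsymbol{\theta}\in U_k\mid\mathbf{x}_{1:n})\to 1$, since an infimum over $k$ of sequences each tending to $1$ need not tend to $1$. Passing instead through the a.s.\ limits under the full data $\mathbf{x}$ --- where each martingale $(\prob(\boldsymbol{\theta}\in U_k\mid\mathbf{x}_{1:n}))_n$ has already converged --- is what makes the collapse $\bigcap_k U_k=\{\theta\}$ legitimate. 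The only other points to watch are minor bookkeeping: ``a.s.\ under $P_\theta$'' statements may be intersected over countably many $k$, and unconditional a.s.\ statements restrict to $P_\theta$-a.s.\ ones because $\Pi(\{\theta\})>0$; and since $\boldsymbol{\theta}$ takes countably many values with all subsets of $\Theta$ measurable, a discrete regular conditional distribution $\prob(\,\cdot\mid\mathbf{x})$ exists, making the use of ``continuity from above'' meaningful.
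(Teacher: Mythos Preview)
Your proof is correct and follows essentially the same route as the paper: the ``if'' direction by squeezing, and the ``only if'' direction by applying L\'evy's upward theorem to reach $\prob(\boldsymbol{\theta}\in U\mid\mathbf{x})=1$ a.s., collapsing from neighbourhoods to the singleton $\{\theta\}$, and then (as you make explicit, while the paper leaves it implicit) applying L\'evy once more to return to finite $n$. The only minor variation is in the collapse step: the paper shows $\prob(\boldsymbol{\theta}=\theta'\mid\mathbf{x})=0$ a.s.\ for each $\theta'\ne\theta$ via the $T_1$ property and then sums over the countable $\Theta$, whereas you intersect a countable decreasing neighbourhood basis and invoke continuity from above of the regular conditional distribution --- both arguments are standard and equally valid in this setting.
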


\begin{proof}
\emph{if}: For any neighbourhood, $U$ of $\theta$, $1\geq\prob(\boldsymbol{\theta}\in U|\mathbf{x}_{1:n})\geq\prob(\boldsymbol{\theta}=\theta|\mathbf{x}_{1:n})$. Therefore $\lim_{n\to\infty}\prob(\boldsymbol{\theta}\in U|\mathbf{x}_{1:n})=1\, a.s.$ by the order limit theorem.

\emph{only if}: Assume $(\boldsymbol{\theta},\mathbf{x})$ has a consistent posterior, and assume that $\mathbf{x}$ is distributed according to $P_\theta$. Now, for every neighbourhood, $U$, $\prob(\boldsymbol{\theta}\in U)=E(\mathbf{1}_{\{\boldsymbol{\theta}\in U\}})$ is the expectation of a random variable bounded between 0 and 1.  We can therefore  apply L\'{e}vy's upward theorem to Definition ~\ref{D:conspost} to get that for every neighbourhood, $U$, of $\theta$, $\prob(\boldsymbol{\theta}\in U|\mathbf{x})=1$ almost surely. However, by the $T_1$ property of $\setR^n$, for every $\theta'\ne\theta$ there exists a neighbourhood $U'$ of $\theta$ such that $\theta'\in U'^c$. Therefore $\prob(\boldsymbol{\theta}=\theta'|\mathbf{x})\leq\prob(\boldsymbol{\theta}\in U'^c|\mathbf{x})=0\, a.s.$ Finally since $\Theta$ is discrete:
\begin{eqnarray*}
\prob(\boldsymbol{\theta}\in U|\mathbf{x})&=&\sum_{\tilde{\theta}\in U}\prob(\boldsymbol{\theta}=\tilde{\theta}|\mathbf{x})\\
&=&\prob(\boldsymbol{\theta}=\theta|\mathbf{x})\ +\!\sum_{\tilde{\theta}\in U\backslash\{\theta\}}\prob(\boldsymbol{\theta}=\tilde{\theta}|\mathbf{x})\\
\therefore\  \prob(\boldsymbol{\theta}=\theta|\mathbf{x})&=&1\  a.s.
\end{eqnarray*}
\end{proof}

Now in order to prove the main result of this subsection (Lemma \ref{L:impcons}) we will need the following (well-known) lemma:

\begin{lemma}\label{L:subsubas}\cite[Thm. 6.3.1(b), p. 172]{Resnick1999}
A sequence of random variables $(\mathbf{y}_n)_{n\in\setN}$ converges in probability to $\mathbf{y}$ if and only if, for every subsequence $(\mathbf{y}_{n_m})_{m\in\setN}$ there exists a further subsequence $(\mathbf{y}_{n_{m_k}})_{k\in\setN}$ which converges almost surely to $\mathbf{y}$.
\end{lemma}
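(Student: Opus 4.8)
The plan is to prove the two implications separately. For the forward direction (``only if'') I would combine a greedy index selection with the first Borel--Cantelli lemma; for the reverse direction (``if'') I would argue by contradiction, using that almost sure convergence implies convergence in probability.

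\emph{Forward direction.} Assume $(\mathbf{y}_n)_{n\in\setN}\to\mathbf{y}$ in probability and fix an arbitrary subsequence $(\mathbf{y}_{n_m})_{m\in\setN}$; it too converges in probability to $\mathbf{y}$, so for each fixed $k\in\setN$ we have $\prob\bigl(\eucNorm{\mathbf{y}_{n_m}-\mathbf{y}}>1/k\bigr)\to0$ as $m\to\infty$. I would then pick indices $m_1<m_2<\cdots$ recursively so that $\prob\bigl(\eucNorm{\mathbf{y}_{n_{m_k}}-\mathbf{y}}>1/k\bigr)<2^{-k}$ for each $k$: at stage $k$ only a single inequality must be met and only finitely many indices need to be skipped to keep $(m_k)$ strictly increasing, so the choice is always possible. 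Since $\sum_k 2^{-k}<\infty$, the first Borel--Cantelli lemma gives that, almost surely, only finitely many of the events $\{\eucNorm{\mathbf{y}_{n_{m_k}}-\mathbf{y}}>1/k\}$ occur; on that almost sure event $\eucNorm{\mathbf{y}_{n_{m_k}}-\mathbf{y}}\le1/k$ for all large $k$, so $(\mathbf{y}_{n_{m_k}})_{k\in\setN}$ converges to $\mathbf{y}$ almost surely, as required.

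\emph{Reverse direction.} Suppose every subsequence of $(\mathbf{y}_n)_{n\in\setN}$ admits a further subsequence converging a.s. to $\mathbf{y}$, yet $(\mathbf{y}_n)$ does not converge in probability to $\mathbf{y}$. Negating convergence in probability produces $\varepsilon>0$, $\delta>0$ and a subsequence $(\mathbf{y}_{n_m})_{m\in\setN}$ with $\prob\bigl(\eucNorm{\mathbf{y}_{n_m}-\mathbf{y}}>\varepsilon\bigr)\ge\delta$ for all $m$. By hypothesis this subsequence has a further subsequence $(\mathbf{y}_{n_{m_k}})_{k\in\setN}$ converging to $\mathbf{y}$ almost surely, hence in probability, so $\prob\bigl(\eucNorm{\mathbf{y}_{n_{m_k}}-\mathbf{y}}>\varepsilon\bigr)\to0$, contradicting the uniform lower bound $\delta$. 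Therefore $(\mathbf{y}_n)\to\mathbf{y}$ in probability.

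\emph{Expected obstacle.} I do not foresee a real difficulty: this is a textbook fact (as the citation to \cite{Resnick1999} signals), and the two arguments above are the standard ones. The only step that needs a sentence of care is the recursive index choice in the forward direction --- checking that the probability bound $2^{-k}$ and strict monotonicity of $(m_k)$ can be arranged simultaneously --- and one should recall, for the contradiction argument, the elementary fact that almost sure convergence implies convergence in probability.
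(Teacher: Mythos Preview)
Your proof is correct and is exactly the standard argument. Note that the paper does not actually prove this lemma: it is stated with a citation to \cite[Thm.~6.3.1(b)]{Resnick1999} and used without proof, so there is no ``paper's own proof'' to compare against. Your forward direction via Borel--Cantelli and your reverse direction by contradiction are precisely the textbook route one finds in Resnick.
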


\begin{lemma}\label{L:impcons}
No estimator is consistent for an estimation problem with an inconsistent posterior.
\end{lemma}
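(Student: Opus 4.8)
The plan is to argue the contrapositive: if some estimator $(\hat\theta_n)_{n\in\setN}$ is consistent for $T=(\boldsymbol\theta,\mathbf{x})$, then $T$ has a consistent posterior, which by Lemma~\ref{L:disconspost} means $\prob(\boldsymbol\theta=\theta\mid\mathbf{x}_{1:n})\to 1$ a.s.\ under $P_\theta$ for every $\theta$. The key idea is that a consistent estimator lets us ``decode'' $\boldsymbol\theta$ from the data in the limit, and the posterior probability of $\boldsymbol\theta$ cannot persistently disagree with such a decoding. First I would fix $\theta\in\Theta$ and, using the $T_1$ (and first-countable) property of $\Theta$, choose a shrinking neighbourhood basis $U_1\supseteq U_2\supseteq\cdots$ of $\theta$ with $\bigcap_k U_k=\{\theta\}$. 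Consistency of $(\hat\theta_n)$ at $\theta$ gives, for each fixed $k$, $\prob(\randHat_n\in U_k\mid\boldsymbol\theta=\theta)\to 1$ as $n\to\infty$.

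Next I would relate the estimator to the posterior. For any event $A$ measurable with respect to $\mathbf{x}_{1:n}$ (in particular $A=\{\randHat_n\in U_k\}$, which depends only on $\mathbf{x}_{1:n}$), the defining property of conditional expectation gives
\[
\prob(\boldsymbol\theta=\theta\mid\boldsymbol\theta=\theta)\,\prob(A\mid\boldsymbol\theta=\theta)\ \le\ \text{(a bound involving }\prob(\boldsymbol\theta\in U_k\mid\mathbf{x}_{1:n})\text{)},
\]
but cleaner is the following averaging step: on the event $\{\randHat_n\in U_k\}$ one would like the conditional probability $\prob(\boldsymbol\theta\in U_k\mid\mathbf{x}_{1:n})$ to be large, yet this need not hold pointwise. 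The robust route is to instead work with a second parameter $\theta'\neq\theta$ and a neighbourhood $U_k$ of $\theta$ with $\theta'\notin U_k$: consistency at $\theta'$ forces $\prob(\randHat_n\in U_k\mid\boldsymbol\theta=\theta')\to 0$. Summing $\prob(\randHat_n\in U_k)=\sum_{\vartheta}\prob(\boldsymbol\theta=\vartheta)\prob(\randHat_n\in U_k\mid\boldsymbol\theta=\vartheta)$ and comparing with $\prob(\boldsymbol\theta\in U_k)$, one shows $\prob(\randHat_n\in U_k\mid\mathbf{x}_{1:n})$ and $\prob(\boldsymbol\theta\in U_k\mid\mathbf{x}_{1:n})$ must agree in the limit in $L^1$. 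Concretely: $E\bigl|\,\mathbf 1_{\{\randHat_n\in U_k\}}-\prob(\boldsymbol\theta\in U_k\mid\mathbf x_{1:n})\,\bigr|\to 0$, because the indicator is $\mathbf x_{1:n}$-measurable and its conditional mean $\prob(\randHat_n\in U_k\mid\mathbf x_{1:n})$ is squeezed toward $\prob(\boldsymbol\theta\in U_k\mid\mathbf x_{1:n})$ once both $\prob(\randHat_n\in U_k,\boldsymbol\theta\notin U_k)$ and $\prob(\randHat_n\notin U_k,\boldsymbol\theta\in U_k)$ go to $0$; these two quantities are controlled by consistency at the $\theta'\notin U_k$ and at $\theta$ respectively (the latter needs $U_k$ to be a neighbourhood of $\theta$), after splitting the expectation over the countably many values of $\boldsymbol\theta$ and invoking dominated convergence.

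From $E\bigl|\mathbf 1_{\{\randHat_n\in U_k\}}-\prob(\boldsymbol\theta\in U_k\mid\mathbf x_{1:n})\bigr|\to 0$ together with $\prob(\randHat_n\in U_k\mid\boldsymbol\theta=\theta)\to 1$, I get $\prob(\boldsymbol\theta\in U_k\mid\mathbf x_{1:n})\to 1$ in probability under $P_\theta$. Then I would pass from convergence in probability to the almost-sure statement required by Definition~\ref{D:conspost} using Lemma~\ref{L:subsubas}: along every subsequence there is a further subsequence converging a.s., and since by Lemma~\ref{L:disconspost} the limiting object $\prob(\boldsymbol\theta\in U_k\mid\mathbf x)$ equals a genuine $\{0,1\}$-valued limit that is monotone in $k$, the full sequence $\prob(\boldsymbol\theta\in U_k\mid\mathbf x_{1:n})\to 1$ a.s. Finally, intersecting over the countable family $\{U_k\}$ and over the countably many $\theta$ yields \eqref{Eq:conspost}, i.e.\ $T$ has a consistent posterior; contrapositively, no estimator is consistent for a problem with an inconsistent posterior.

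The main obstacle I anticipate is the pointwise-vs-average gap: a consistent estimator controls $\prob(\randHat_n\in U_k\mid\boldsymbol\theta=\theta)$ but says nothing directly about the random variable $\prob(\boldsymbol\theta\in U_k\mid\mathbf x_{1:n})$ on individual sample paths, so the $L^1$ (or in-probability) comparison between the estimator's indicator and the posterior — and in particular bounding the ``cross terms'' $\prob(\randHat_n\in U_k,\boldsymbol\theta=\theta')$ for $\theta'\notin U_k$ — is the crux; handling the countably infinite parameter space there (uniformity of the tail of $\Pi$) via dominated convergence is the delicate point. Once that comparison is in hand, upgrading to almost-sure convergence via Lévy's theorem (Lemma~\ref{L:levy}) and the subsequence principle (Lemma~\ref{L:subsubas}) is routine.
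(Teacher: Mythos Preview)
Your strategy is different from the paper's and is viable, but the step you flag as delicate is not the one that actually bites. Dominated convergence with dominating sequence $\Pi(\theta')$ is immediate; the real problem is the claim that consistency at $\theta'\notin U_k$ forces $\prob(\randHat_n\in U_k\mid\boldsymbol{\theta}=\theta')\to 0$. This fails whenever $\theta'\in\partial U_k$: with $\Theta=\mathbb{Q}\cap[0,1]$, $\theta=0$, $U_k=(-1/k,1/k)$ and $\theta'=1/k$, an estimator taking values in $(1/k-1/n,1/k)\cap\mathbb{Q}$ under $P_{\theta'}$ is consistent at $\theta'$ yet satisfies $\prob(\randHat_n\in U_k\mid\boldsymbol{\theta}=\theta')=1$ for every $n$. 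The repair is to exploit countability of $\Theta$: choose the radii of the open balls $U_k$ to avoid the countable set $\{\,\|\theta-\theta'\|:\theta'\in\Theta\,\}$, so that no $\theta'\in\Theta$ lies on $\partial U_k$; then every $\theta'\notin U_k$ is strictly separated from $U_k$ by a neighbourhood and your termwise argument (hence DCT) goes through. With this fix the remaining steps do work: the joint $L^1$ bound transfers to $P_\theta$ because $E[\,|W_n|\,]\ge\Pi(\theta)\,E[\,|W_n|\mid\boldsymbol{\theta}=\theta\,]$, and the upgrade to almost-sure convergence is most cleanly done by L\'evy's theorem alone (the a.s.\ limit $\prob(\boldsymbol{\theta}\in U_k\mid\mathbf{x})$ exists and must agree with the in-probability limit $1$), with no need for Lemma~\ref{L:subsubas} or the reference to Lemma~\ref{L:disconspost}.

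For comparison, the paper's argument is a short direct contradiction that sidesteps the $L^1$ machinery and the boundary issue entirely. Starting from an inconsistent posterior, L\'evy's theorem gives a $\theta^*$ and a set $S$ of positive $P_{\theta^*}$-measure on which $\prob(\boldsymbol{\theta}\ne\theta^*\mid\mathbf{x})>0$. Assuming a consistent estimator, the subsequence principle (Lemma~\ref{L:subsubas}) extracts a subsequence converging a.s.\ to $\theta^*$ under $P_{\theta^*}$ on a set $A$ of full measure; intersecting with $S$ and applying Bayes' theorem isolates a single $\theta'\ne\theta^*$ with $P_{\theta'}(A\cap S)>0$, whereupon a further a.s.-convergent subsequence to $\theta'$ contradicts convergence to $\theta^*$ on $A$. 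This uses only two parameter values at a time and never sums over $\Theta$, which is why no neighbourhood-boundary or uniformity question arises.
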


\begin{proof}
Assume $(\boldsymbol{\theta},\mathbf{x})$ is an estimation problem with an inconsistent posterior. Then, applying L\'{e}vy's upward theorem to Lemma \ref{L:disconspost}, this implies that for some $\theta\in\Theta$, with positive probability  when $\mathbf{x}$ is distributed according to $P_{\theta}$,  $\prob(\boldsymbol{\theta}\ne\theta|\mathbf{x})>0$. Let $\theta^*$ be such a member of $\Theta$ and let $S\defeq\{x\in X|\prob(\boldsymbol{\theta}\ne\theta^*|\mathbf{x}=x)>0\}$ then: 

\begin{equation}\label{Eq:S>0}
 \prob(\mathbf{x}\in S|\boldsymbol{\theta}=\theta^*)>0.
 \end{equation}

Suppose $\hat{\theta}$ is a consistent estimator for the estimation problem, so that for any $\theta$, $\randHat_n(=\hat{\theta}_n(\mathbf{x}_{1:n}))$ converges to $\theta$ in probability, when $\mathbf{x}$ is drawn from $P_\theta$. Therefore, by Lemma \ref{L:subsubas}, for every subsequence $\randHat_{n_m}$ there is a further subsequence $\randHat_{n_{m_k}}$ which converges to $\theta$ almost surely when  $\mathbf{x}$ is distributed according to $P_\theta$. Thus, in particular, there exists a subsequence $\randHat_{n_{m^*}}$ such that $\prob(\randHat_{n_{m^*}}\rightarrow \theta^*|\boldsymbol{\theta}=\theta^*)=1$. Letting $A\defeq\{x\in X|\hat{\theta}_{n_{m^*}}(x_{1:n_{m^*}})\rightarrow\theta^*\}$ we can restate this as:

\begin{equation}\label{Eq:probsub}
\prob(\mathbf{x}\in A|\boldsymbol{\theta}=\theta^*)=1.
\end{equation}

Thus combining \eqref{Eq:S>0} and \eqref{Eq:probsub} we get:

\begin{equation}\label{Eq:AcapS}
\prob(\mathbf{x}\in A\cap S|\boldsymbol{\theta}=\theta^*)>0.
\end{equation}

Since $\prob(\boldsymbol{\theta}=\theta^*)>0$, this implies that $\prob(\mathbf{x}\in A\cap S)>0$. Further by the definition of S, $\prob(\boldsymbol{\theta}\ne\theta^*|\mathbf{x}\in A\cap S)>0$. So by Bayes' theorem:

\begin{equation}\label{Eq:Uc}
\prob(\mathbf{x}\in A\cap S|\boldsymbol{\theta}\ne\theta^*)>0.
\end{equation}

It follows that for some $\theta'\ne\theta^*$

\begin{equation}\label{Eq:theta'}
\prob(\mathbf{x}\in A\cap S|\boldsymbol{\theta}=\theta')>0.
\end{equation}

But again by consistency of $\hat{\theta}$ there is a subsequence $\randHat_{n_{m^*_k}}$ of $\randHat_{n_{m^*}}$ such that 
\begin{equation}\label{Eq:nmk}
\prob(\randHat_{n_{m^*_k}}\!\!\!\rightarrow\,\theta'|\boldsymbol{\theta}=\theta')=1.
\end{equation}

\eqref{Eq:theta'} and \eqref{Eq:nmk} together imply that for some $x\in A$, $\lim_{k\to\infty}\hat{\theta}_{n_{m^*_k}}(x_{1:n_{m^*_k}})=\theta'$ contradicting the definition of A. Thus $\hat{\theta}$ cannot be consistent for this problem, and therefore there can be no consistent estimator for an estimation problem with an inconsistent posterior.
\end{proof}

Lemma \ref{L:impcons} implies that any estimator class which is consistent over the class of problems with consistent posteriors is consistent on all problems for which consistent estimators exist at all. We shall call such estimator classes, \emph{properly consistent}.

\subsection{Bayes Estimators}\label{Sec:BayesEst}

We now introduce \emph{Bayes estimator classes} which form our first examples of Bayesian estimator classes. In the next subsection we will then give sufficient conditions for Bayes estimator class to be properly consistent. \emph{Bayes estimators}  \cite[p.255]{lehmann1998theory} are widely discussed examples of Bayesian estimators that attempt to minimise the posterior expectation of some \emph{loss function} on the parameter space representing the cost of guessing the model incorrectly. Specifically:

\begin{defi}\label{D:loss}
A function $L:\Theta\times\Theta\rightarrow \mathbb{R}^{\ge 0}$ is a \emph{loss function} for estimation problem $(\boldsymbol{\theta},\mathbf{x})$ if, for all $\theta\in\Theta$, $L(\theta,\theta)=0$.
\end{defi}

\begin{defi}\label{D:Bayes}
A \emph{Bayes estimator $\hat{\theta}_{L}$ associated with Loss function $L$} is any estimator satisfying:
\[\hat{\theta}_{L}(x)\in\argmin_{\theta_0} E(L(\boldsymbol{\theta},\theta_0)|\mathbf{x}=x).\]
\end{defi}

Note that while Definition \ref{D:loss} treats the two arguments of the loss function symmetrically, Definition \ref{D:Bayes} does not. The reason is that there is a semantic difference in the two arguments of the loss function. $L(\theta,\theta')$ represents the loss suffered when $\theta'$ is the estimated value and $\theta$ is the true value of $\boldsymbol{\theta}$. However we make no requirement that the loss functions themselves treat the arguments asymmetrically, and in fact many of the most popular loss functions (such as squared distance) do not. Of the loss function we introduce later in this section the discrete loss function is symmetric, while the Kullback-Leibler loss function is not.

Note further that while Definition \ref{D:Bayes} identifies all and only those estimators that are ordinarily called Bayes estimators, there are two important differences between the way we will treat Bayes estimators in this paper and the way they are treated in the literature (for instance in the complete class theorem \cite{Wald1947, Stein1955, Sacks1963}). The first difference was already noted in Section \ref{Sec:EstProb} --- namely that we treat the prior as part of the estimation problem rather than as part of the estimator.  The second difference is in the estimator classes we shall consider. Bayes estimators are typically considered individually or grouped together in a single class of all Bayes estimators (such as in the complete class theorem just mentioned). However, in order to use Bayes estimation one must first select a loss function to use. We will therefore subdivide the class of all Bayes estimators into estimator classes based on their associated loss functions.

The construction of the Bayes estimator classes thus takes a little more work, since (like estimators) loss functions only exist for a given estimation problem. To define Bayes estimator classes which exists across estimation problems with varying $\Theta$, we need the notion of a \emph{general loss function}:

\begin{defi} \label{D:genloss}
A \emph{general loss function} $\mathscr{L}$ is a function from estimation problems to loss functions. 
\end{defi}
The loss function which is the image of estimation problem $T$ under general loss function $\mathscr{L}$ we will denote with $\mathscr{L}^{(T)}$. Thus $\mathscr{L}^{(T)}(\theta,\theta')$ represents the loss from selecting $\theta'$ as an estimate when $\theta$ is the true value when applying general loss function $\mathscr{L}$ to estimation problem $T$.

In practice a loss function for a particular estimation problem is usually chosen using information from only some of the components of an estimation problem, imposing structure on the possible form of the general loss function. In particular general loss functions which are used in practice tend to either be \emph{parameter-based} or \emph{distribution-based}. A \emph{parameter-based loss function} is a general loss function such that $\mathscr{L}^{(T)}(\theta,\theta')$ depends only on the values $\theta$ and $\theta'$ (independently of $T$). A \emph{distribution-based loss function} is a general loss function such that $\mathscr{L}^{(T)}(\theta,\theta')$ depends only on the distributions $P_\theta$ and $P_{\theta'}$ in $T$. For instance the general loss function which puts the discrete metric on every space: 
\[\mathscr{L}_\text{disc}^{(T)}(\theta,\theta')\defeq \left\{\begin{array}{ll}
1 & \text{if }\theta\ne\theta'\\
0 & \text{if }\theta=\theta'
\end{array}
\right.
\]
is a parameter-based loss function. An example of a popular distribution-based loss function is the following, based on Kullback-Leibler divergence:
\[\mathscr{L}_\text{KL}^{(T)}(\theta,\theta')\defeq D_{\text{KL}}(P_\theta,P_{\theta'})\defeq E_{P_\theta}\left(\log\RN{P_\theta}{P_{\theta'}}\right).\]

The Bayes estimator class associated with a given general loss function $\mathscr{L}$ is the class of Bayes estimators whose associated loss function is the image of the estimation problem the estimator is defined on, under the general loss function. A Bayes estimator class whose associated general loss function is parameter-based (distribution-based) we will call a \emph{parameter-based (distribution-based) Bayes estimator class}. Thus the class of Bayes estimators minimising the discrete metric is a parameter-based Bayes estimator class which is more commonly known as the \emph{Maximum A Posteriori (MAP) estimator class}, and the class of Bayes estimators minimising Kullback-Leibler divergence is a distribution-based Bayes estimator class which we will follow \cite{DoweBaxterOliverWallace1998} in calling the \emph{minimum Expected Kullback-Leibler (minEKL) estimator class}. If an individual Bayes estimator is a member of a parameter-based (distribution-based) Bayes estimator class we will call it a \emph{parameter-based (distribution-based) estimator}. This implies that an estimator may be both a parameter-based and a distribution-based Bayes estimator, while an estimator \emph{class} cannot be both parameter-based and distribution-based.

We have dealt explicitly only with small sample estimation problems in defining Bayes estimator classes, which in light of Section \ref{Sec:Est} is sufficient to define Bayes estimators and estimator classes over large sample estimation problems as well. There is an important point to note about the associated loss functions in large sample estimation problems however. In the large sample case the distributions associated with each member of $\Theta$, and so the associated loss functions of distribution-based Bayes estimators, depends on the number of data points observed. Thus a distribution-based Bayes estimator on a large sample estimation does not have a single associated loss function. Instead such an estimator has a sequence of associated loss functions $(L_n)_{n\in\setN}$. While the associated loss function of a parameter-based Bayes estimator does not depend on the number of data points observed, in order to provide a uniform treatment of the two types of Bayes estimator classes we shall consider all Bayes estimators on large sample estimation problems to have an associated \emph{loss function sequence} $(L_n)_{n\in\setN}=(\mathscr{L}^{(T_n)})_{n\in\setN}$ instead of an associated loss function (with parameter-based Bayes estimators' associated loss function sequences being constant).

\subsection{Consistency of Bayes Estimator Classes}

One natural way to show that a Bayes estimator is consistent is to show that: 
\begin{enumerate}
\item The expected loss of the true value converges to zero, while 
\item The expected loss of values far away from the true value are eventually bounded away from zero.
\end{enumerate}
With $\boldsymbol{\theta}=\theta$, this can fail if either the loss $L_n(\theta,\theta')$ can get too small so there is some sequence of estimates not converging to $\theta$ which has expected losses converging to zero, or conversely if $L_n(\theta',\theta)$ can get too large so that $E(L_n(\boldsymbol{\theta},\theta)|\mathbf{x}_{1:n})$ does not converge to zero. This provides the intuition for the following definition of a \emph{discerning} sequence of loss functions, which is designed to avoid both of these issues. 

\begin{defi}\label{D:seqdisc}		
The sequence of loss functions $(L_n)_{n\in\setN}$ is \emph{discerning} if for all $\theta\in\Theta$, and any neighbourhood $U$ of $\theta$, 
\begin{equation}\label{eq:disc}
\liminf_{n\to\infty} \inf_{\theta'\in U^c} \frac{L_n(\theta,\theta')}{K^{(\theta)}_n}>0
\end{equation}
where $K^{(\theta)}_n\defeq \sup_{\theta'\in\Theta}L_n(\theta',\theta)$. Loss function $L$ is \emph{discerning} if the constant sequence $(L)_{n\in\setN}$ (i.e., the sequence $(L_n)_{n\in\setN}$ where for all $n$, $L_n=L$) is discerning. A general loss function sequence $\mathscr{L}$ is \emph{discerning} if, for every large sample estimation problem $T$, the loss function sequence $(\mathscr{L}^{(T_n)})_{n\in\setN}$ is discerning.
\end{defi}

Note that for constant loss function sequences $(L)_{n\in\setN}$ equation \eqref{eq:disc} reduces to 
\[\frac{\inf_{\theta'\in U^c}L(\theta,\theta')}{K^{(\theta)}}>0,\]
where $K^{(\theta)}\defeq\sup_{\theta'\in\Theta}L(\theta',\theta)$. This holds if only if $\inf_{\theta'\in U^c}L(\theta,\theta')>0$ and $K^{(\theta)}<\infty$.

Having a discerning associated loss function sequence is a sufficient condition for the consistency of a Bayes estimator:
 
\begin{lemma}\label{L:disccons}
If $T=(\boldsymbol{\theta},\mathbf{x})$ is a large sample estimation problem with a consistent posterior, and $(L_n)_{n\in\setN}$ is a discerning loss function sequence, then any Bayes estimator on $T$ associated with $(L_n)$, $\hat{\theta}_{L_n}$, is strongly consistent on $T$.
\end{lemma}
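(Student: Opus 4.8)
The plan is to realise, at the level of almost-sure convergence, the two-part intuition stated just before Definition~\ref{D:seqdisc}, so that strong consistency drops out directly. Fix $\theta\in\Theta$ and condition throughout on $\boldsymbol{\theta}=\theta$ (equivalently, take $\mathbf{x}\sim P_\theta$). I would write $g_n(\theta_0)\defeq E(L_n(\boldsymbol{\theta},\theta_0)\mid\mathbf{x}_{1:n})=\sum_{\theta'\in\Theta}L_n(\theta',\theta_0)\,\prob(\boldsymbol{\theta}=\theta'\mid\mathbf{x}_{1:n})$, so that $\randHat_n\in\argmin_{\theta_0}g_n(\theta_0)$, and observe that, since $K_n^{(\theta)}=\sup_{\theta'}L_n(\theta',\theta)$ does not depend on $\theta_0$, the estimator equally minimises the normalised objective $\tilde g_n(\theta_0)\defeq g_n(\theta_0)/K_n^{(\theta)}$ --- which is exactly the quantity that the discerning condition \eqref{eq:disc} controls. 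All subsequent work is done with $\tilde g_n$, restricting to the (all but finitely many) $n$ for which $K_n^{(\theta)}\in(0,\infty)$.

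Two one-line estimates do the heavy lifting. First, at the truth: since $L_n(\theta,\theta)=0$ and $L_n(\theta',\theta)\le K_n^{(\theta)}$ for all $\theta'$, we get $\tilde g_n(\theta)\le\prob(\boldsymbol{\theta}\ne\theta\mid\mathbf{x}_{1:n})$, and because $T$ has a consistent posterior, Lemma~\ref{L:disconspost} yields $\prob(\boldsymbol{\theta}\ne\theta\mid\mathbf{x}_{1:n})\to0$ a.s., hence $\tilde g_n(\theta)\to0$ a.s. Second, away from the truth: for any neighbourhood $U$ of $\theta$ and any $\theta_0\in U^c$, dropping all terms of $g_n(\theta_0)$ except the $\theta'=\theta$ one gives $g_n(\theta_0)\ge\big(\inf_{\theta'\in U^c}L_n(\theta,\theta')\big)\prob(\boldsymbol{\theta}=\theta\mid\mathbf{x}_{1:n})$, and this bound is uniform in $\theta_0$, so $\inf_{\theta_0\in U^c}\tilde g_n(\theta_0)\ge\frac{\inf_{\theta'\in U^c}L_n(\theta,\theta')}{K_n^{(\theta)}}\,\prob(\boldsymbol{\theta}=\theta\mid\mathbf{x}_{1:n})$. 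The discerning hypothesis gives the first factor a strictly positive $\liminf$, say $c>0$, and Lemma~\ref{L:disconspost} gives $\prob(\boldsymbol{\theta}=\theta\mid\mathbf{x}_{1:n})\to1$ a.s., so almost surely $\liminf_n\inf_{\theta_0\in U^c}\tilde g_n(\theta_0)\ge c>0$.

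Combining the two estimates: on the intersection of these probability-one events there is an $N$ with $\tilde g_n(\theta)<c/2<\inf_{\theta_0\in U^c}\tilde g_n(\theta_0)$ for all $n\ge N$, whence the minimiser $\randHat_n$ must lie in $U$ for all $n\ge N$; thus $\randHat_n$ is eventually in $U$ a.s. Finally I would invoke first countability of $\Theta$ to fix a countable neighbourhood base at $\theta$, intersect the corresponding probability-one events over this base, and conclude $\randHat_n\to\theta$ a.s.; since $\theta\in\Theta$ was arbitrary this is strong consistency of $\hat\theta_{L_n}$ on $T$.

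I expect the only real obstacle to be the bookkeeping around the normalisation $K_n^{(\theta)}$: one must argue that a discerning loss sequence cannot have $K_n^{(\theta)}=0$ or $K_n^{(\theta)}=\infty$ for infinitely many $n$ (otherwise $\tilde g_n$ is undefined or \eqref{eq:disc} cannot hold), so that for all large $n$ the quantity $\tilde g_n$ makes sense and $\argmin_{\theta_0}g_n=\argmin_{\theta_0}\tilde g_n$. The remaining care is purely measure-theoretic: ensuring the countably many almost-sure assertions over the neighbourhood base at $\theta$ are taken on one common probability-one event before concluding $\randHat_n\to\theta$ a.s.
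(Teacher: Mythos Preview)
Your proof is correct and follows essentially the same route as the paper's: both arguments normalise by $K_n^{(\theta)}$ and combine the estimate $\tilde g_n(\theta)\le \prob(\boldsymbol{\theta}\ne\theta\mid\mathbf{x}_{1:n})\to 0$ with the lower bound $\tilde g_n(\theta_0)\ge \prob(\boldsymbol{\theta}=\theta\mid\mathbf{x}_{1:n})\cdot\inf_{\theta'\in U^c}L_n(\theta,\theta')/K_n^{(\theta)}$ for $\theta_0\notin U$, invoking the discerning hypothesis to force the minimiser into $U$. The only cosmetic differences are that the paper packages this as a proof by contradiction (fixing a single bad $x^*\in A_{\theta^*}\cap B_{\theta^*}$) whereas you argue directly on almost-sure events and then intersect over a countable neighbourhood base; your explicit flagging of the $K_n^{(\theta)}\in\{0,\infty\}$ bookkeeping is a point the paper handles only partially (it notes $K_n^{(\theta)}<\infty$ but is silent on the degenerate case $K_n^{(\theta)}=0$).
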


\begin{proof}
Let $T=(\boldsymbol{\theta},\mathbf{x})$ be a large sample estimation problem with a consistent posterior, let $(L_n)_{n\in\setN}$ be a discerning loss function sequence, and for each $\theta\in\Theta$, let $A_\theta\defeq\{x\in X|\lim_{n\to\infty}\prob(\boldsymbol{\theta}=\theta|\mathbf{x}_{1:n}=x_{1:n})=1\}$. Then Lemma \ref{L:disconspost} implies:
\begin{equation}\label{eq:Aeq1}
\prob(\mathbf{x}\in A_\theta|\boldsymbol{\theta}=\theta)=1.
\end{equation}

Now suppose that $(\hat{\theta}_{L_n})_{n\in\setN}$ is not strongly consistent. Then there exists a $\theta^*\in\Theta$ such that $(\boldsymbol{\hat{\theta}}_{L_n})_{n\in\setN}$ does not converge almost surely to $\theta^*$ when $\boldsymbol{\theta}=\theta^*$. Thus let  $B_\theta\defeq\{x\in X|\hat{\theta}_n(x_{1:n})\nrightarrow \theta\}$, so that by assumption 
\begin{equation}\label{eq:Bgeq0}
\prob(\mathbf{x}\in B_{\theta^*}|\boldsymbol{\theta}=\theta^*)>0
\end{equation}
Then from \eqref{eq:Aeq1} and \eqref{eq:Bgeq0} we get:
\[
\prob(\mathbf{x}\in A_{\theta^*}\cap B_{\theta^*}|\boldsymbol{\theta}=\theta^*)>0,
\]
and
\[
\prob(\mathbf{x}\in A_{\theta^*}|\text{$\mathbf{x}\in B_{\theta^*}$ and $\boldsymbol{\theta}=\theta^*$})=1.
\]

Therefore $A_{\theta^*}\cap B_{\theta^*}$ is not empty so we may choose $x^*\in  A_{\theta^*}\cap B_{\theta^*}$. Now note that since $(L_n)_{n\in\setN}$ is discerning, for all $\theta\in \Theta$, $K_n^{(\theta)}<\infty$. Further, because by definition $K_n^{(\theta)}=\sup_{\theta'\in\Theta}L_n(\theta',\theta)$, we have that for any $\theta'$, $\frac{L_n(\theta',\theta)}{K^{(\theta)}_n}\leq 1$, and because $L_n$ is a loss function, $\frac{L_n(\theta,\theta)}{K^{(\theta)}_n}=0$. Combining these two facts, we get:

\begin{equation}
E\left(\frac{L_n(\boldsymbol{\theta},\theta)}{K^{(\theta)}_n}\middle|\mathbf{x}_{1:n}=x_{1:n}\right)\leq 1-\prob\left(\boldsymbol{\theta}=\theta\middle|\mathbf{x}_{1:n}=x_{1:n}\right).
\end{equation}
Therefore, since $\lim_{n\to\infty}\prob(\boldsymbol{\theta}=\theta^*|\mathbf{x}_{1:n}=x^*_{1:n})=1$:

\begin{equation}\label{eq:LKto0}
\lim_{n\to\infty}E\left(\frac{L_n(\boldsymbol{\theta},\theta^*)}{K^{(\theta^*)}_n}\middle|\mathbf{x}_{1:n}=x^*_{1:n}\right)=0.
\end{equation}

Now, $E\left(\frac{L_n(\boldsymbol{\theta},\boldsymbol{\hat{\theta}}_{L_n})}{K^{(\theta^*)}_n}\middle|\mathbf{x}_{1:n}=x^*_{1:n}\right)=\sum_{\theta'\in\Theta}\prob(\boldsymbol{\theta}=\theta'|\mathbf{x}_{1:n}=x^*_{1:n})\frac{L_n(\theta',\hat{\theta}_{L_n}(x^*_{1:n}))}{K^{(\theta^*)}_n}$, so
\begin{equation}
E\left(\frac{L_n(\boldsymbol{\theta},\boldsymbol{\hat{\theta}}_{L_n})}{K^{(\theta^*)}_n}\middle|\mathbf{x}_{1:n}=x^*_{1:n}\right)\geq\prob(\boldsymbol{\theta}=\theta^*|\mathbf{x}_{1:n}=x^*_{1:n})\frac{L_n(\theta^*,\hat{\theta}_{L_n}(x^*_{1:n}))}{K_n^{(\theta^*)}}.\label{eq:geqprob}
\end{equation}
Since $x^*\in A_{\theta^*}$, $\lim_{n\to\infty}\prob(\boldsymbol{\theta}=\theta^*|\mathbf{x}_{1:n}=x^*_{1:n})=1$, so \eqref{eq:geqprob} implies:

\begin{equation}\label{eq:Etolimsup}
\limsup_{n\to\infty} E\left(\frac{L_n(\boldsymbol{\theta},\boldsymbol{\hat{\theta}}_{L_n})}{K^{(\theta^*)}_n}\middle|\mathbf{x}_{1:n}=x^*_{1:n}\right)\geq \limsup_{n\to\infty}\frac{L_n(\theta^*,\hat{\theta}_{L_n}(x^*_{1:n}))}{K^{(\theta^*)}_n}.
\end{equation}

However, since $x^*\in B_{\theta^*}$, there exists a neighbourhood $U$ of $\theta^*$, and a subsequence $(\hat{\theta}_{L_{n_k}}(x^*_{1:n_k}))_{k\in\mathbb{N}}$ of $(\hat{\theta}_{L_n}(x^*_{1:n}))_{n\in\mathbb{N}}$, such that $(\hat{\theta}_{L_{n_k}}(x^*_{1:n_k}))_{k\in\mathbb{N}}$ is in $U^c$. Hence

\begin{equation}\label{eq:limsuptoliminf}
\limsup_{n\to\infty}\frac{L_n(\theta^*,\hat{\theta}(x^*_{1:n}))}{K^{(\theta^*)}_n}\geq\liminf_{k\to\infty} \frac{L_{n_k}(\theta^*,\hat{\theta}_{L_{n_k}}(x^*_{1:n_k}))}{K^{(\theta^*)}_{n_k}}\geq\liminf_{k\to\infty}\inf_{\theta'\notin U} \frac{L_{n_k}(\theta^*,\theta')}{K^{(\theta^*)}_{n_k}}.
\end{equation}

But since the liminf of a subsequence is at least as great as the liminf of the original sequence \eqref{eq:limsuptoliminf} implies:

\begin{equation}\label{eq:liminf>0}
\limsup_{n\to\infty}\frac{L_n(\theta^*,\hat{\theta}(x^*_{1:n}))}{K^{(\theta^*)}_n}\geq\liminf_{n\to\infty}\inf_{\theta'\in U^c} \frac{L_n(\theta^*,\theta')}{K^{(\theta^*)}_n}>0,
\end{equation}
where the final inequality follows from $(L_n)_{n\in\setN}$ being discerning. Combining \eqref{eq:Etolimsup} and \eqref{eq:liminf>0} we conclude $\limsup_{n\to\infty} E\left(\frac{L_n(\boldsymbol{\theta},\boldsymbol{\hat{\theta}}_{L_n})}{K^{(\theta^*)}_n}\middle|\mathbf{x}_{1:n}=x^*_{1:n}\right)>0$. Therefore there exists a $c>0$ such that for infinitely many $n\in\setN$,  $E\left(\frac{L_n(\boldsymbol{\theta},\boldsymbol{\hat{\theta}}_{L_n})}{K^{(\theta^*)}_n}\middle|\mathbf{x}_{1:n}=x^*_{1:n}\right)>c$.   In contrast, it follows from \eqref{eq:LKto0} that $\lim_{n\to\infty}E\left(\frac{L_n(\boldsymbol{\theta},\theta^*)}{K^{(\theta^*)}_n}\middle|\mathbf{x}_{1:n}=x^*_{1:n}\right)=0$, so there exists an $N\in\setN$ such that for all $n>N$, $E\left(\frac{L_n(\boldsymbol{\theta},\theta^*)}{K^{(\theta^*)}_n}\middle|\mathbf{x}_{1:n}=x^*_{1:n}\right)<c$. Therefore, for infinitely many $k>N$:

\begin{equation}\label{eq:somek}
E(L_k(\boldsymbol{\theta},\theta^*)|\mathbf{x}_{1:k}=x^*_{1:k})<E(L_k(\boldsymbol{\theta},\boldsymbol{\hat{\theta}}_{L_k})|\mathbf{x}_{1:k}=x^*_{1:k})
\end{equation}

Since $E(L_k(\boldsymbol{\theta},\boldsymbol{\hat{\theta}}_{L_k})|\mathbf{x}_{1:k}=x^*_{1:k})=E(L_k(\boldsymbol{\theta},\hat{\theta}_{L_k}(x^*_{1:k}))|\mathbf{x}_{1:k}=x^*_{1:k})$, equation \eqref{eq:somek} implies:

\begin{equation*}
\hat{\theta}_{L_k}(x^*_{1:k})\ne\argmin_{\theta'}E(L_k(\boldsymbol{\theta},\theta')|\mathbf{x}_{1:k}=x^*_{1:k}),
\end{equation*}

contradicting the definition of $\hat{\theta}_{L_k}$ (Definition \ref{D:Bayes}). Hence, our assumption that $(\hat{\theta}_{L_n})_{n\in\setN}$ is not strongly consistent must be false, proving the theorem.
\end{proof}

Lemma \ref{L:disccons} thus defines a family of estimator classes which are (strongly) properly consistent. It remains to exhibit a member of the family, to show that it is non-empty and give our first example of a (strongly) properly consistent estimator class. Our next Lemma shows that the class of MAP estimators is such a class, and is therefore strongly properly consistent.

\begin{cor}\label{C:MAPcons}
The MAP estimator class is strongly properly consistent.
\end{cor}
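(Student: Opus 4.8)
The plan is to invoke Lemma~\ref{L:disccons}. The MAP estimator class is, by construction, the Bayes estimator class associated with the parameter-based general loss function $\mathscr{L}_\text{disc}$; on any large sample estimation problem $T$ its associated loss function sequence is the constant sequence $(\mathscr{L}_\text{disc}^{(T_n)})_{n\in\setN}$. Consequently, once I verify that this constant sequence is discerning and that the MAP class actually contains an estimator for every estimation problem with a consistent posterior, Lemma~\ref{L:disccons} immediately yields that every MAP estimator on such a problem is strongly consistent, which is exactly the assertion that the MAP class is strongly properly consistent.

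For the discerning property I would use the simplified form of \eqref{eq:disc} noted after Definition~\ref{D:seqdisc}: for a constant loss function sequence it suffices that $\inf_{\theta'\in U^c}\mathscr{L}_\text{disc}^{(T)}(\theta,\theta')>0$ and $K^{(\theta)}=\sup_{\theta'\in\Theta}\mathscr{L}_\text{disc}^{(T)}(\theta',\theta)<\infty$ for every $\theta\in\Theta$ and every neighbourhood $U$ of $\theta$. Both are immediate: since $U$ is a neighbourhood of $\theta$, every $\theta'\in U^c$ satisfies $\theta'\ne\theta$ and hence $\mathscr{L}_\text{disc}^{(T)}(\theta,\theta')=1$, so the infimum is $1$ (or $+\infty$ when $U^c=\emptyset$); and $\mathscr{L}_\text{disc}^{(T)}(\theta',\theta)\le 1$ always, so $K^{(\theta)}\le 1<\infty$. (The degenerate case $\abs{\Theta}=1$ is trivial, every estimator being strongly consistent there.)

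For the existence of a MAP estimator on an arbitrary problem, I would observe that, in a small sample problem, for any $x$ with $\prob(\mathbf{x}=x)>0$ we have $E(\mathscr{L}_\text{disc}(\boldsymbol{\theta},\theta_0)\mid\mathbf{x}=x)=1-\prob(\boldsymbol{\theta}=\theta_0\mid\mathbf{x}=x)$, so Definition~\ref{D:Bayes} reduces to requiring $\hat{\theta}(x)\in\argmax_{\theta_0\in\Theta}\prob(\boldsymbol{\theta}=\theta_0\mid\mathbf{x}=x)$ --- the usual description of MAP. Because $\Theta$ is countable and the posterior is a probability mass function on it, at most finitely many parameters carry posterior mass exceeding half of its supremum, so the argmax is attained; choosing one maximiser for each such $x$ (and any value at observations of probability zero) defines a MAP estimator. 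Hence the MAP class is non-empty on every small sample problem, and therefore, via the construction of Section~\ref{Sec:Est}, on every large sample estimation problem.

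Putting these together: for any large sample problem with a consistent posterior, the MAP class contains an estimator on it and, by Lemma~\ref{L:disccons} applied to the discerning constant sequence $(\mathscr{L}_\text{disc}^{(T_n)})_{n\in\setN}$, every member is strongly consistent; so by Definition~\ref{D:classcons} the MAP class is strongly properly consistent. The only step needing any real care is the existence claim --- the verification that posteriors on a countable parameter space attain their maxima, so that the argmax defining MAP is never empty, together with the remark that the estimator's value at zero-probability observations is irrelevant to consistency --- since all the substantive work is already packaged inside Lemma~\ref{L:disccons}.
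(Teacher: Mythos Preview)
Your proposal is correct and follows essentially the same approach as the paper: both verify that the discrete loss is discerning (so Lemma~\ref{L:disccons} gives strong consistency of every MAP estimator on problems with consistent posteriors) and then separately argue existence of a MAP estimator from the fact that a countable posterior must attain its supremum because its masses sum to one. Your existence argument is the direct version of the paper's proof-by-contradiction, and your remark about zero-probability observations is a welcome extra detail the paper leaves implicit.
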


\begin{proof}
The discrete metric is discerning since for any $\theta$ and $U$, $K^{(\theta)}=1=\inf_{\theta'\in U^c} L(\theta,\theta')$. Thus by Lemma \ref{L:disccons} every MAP estimator on a problem with a consistent posterior is strongly consistent. That is the MAP estimator class is strongly partially consistent over problems with consistent posteriors.

 In order to show that MAP is strongly properly consistent, we must additionally show that for every problem $T$ with a consistent posterior, there exists a MAP estimator for $T$: Suppose for contradiction that no MAP estimator exists for $T$. Then for some $n\in\mathbb{N}$ and $x\in X$, $\argmax_{\theta\in\Theta}\prob(\boldsymbol{\theta}=\theta|\mathbf{x}_{1:n}=x_{1:n})$ does not exist. In other words for all $\theta\in\Theta$, $\prob(\boldsymbol{\theta}=\theta|\mathbf{x}_{1:n}=x_{1:n})<\sup_{\theta'\in\Theta}\prob(\boldsymbol{\theta}=\theta'|\mathbf{x}_{1:n}=x_{1:n})$.  It follows that $\sup_{\theta'\in\Theta}\prob(\boldsymbol{\theta}=\theta'|\mathbf{x}_{1:n}=x_{1:n})>0$, and that for all $\epsilon>0$ there exists infinitely many $\theta\in\Theta$ such that $\prob(\boldsymbol{\theta}=\theta|\mathbf{x}_{1:n}=x_{1:n})>\sup_{\theta'\in\Theta}\prob(\boldsymbol{\theta}=\theta'|\mathbf{x}_{1:n}=x_{1:n})-\epsilon$. This then implies $\sum_{\theta\in\Theta}\prob(\boldsymbol{\theta}=\theta|\mathbf{x}_{1:n}=x_{1:n})=\infty$, but we know that $\sum_{\theta\in\Theta}\prob(\boldsymbol{\theta}=\theta|\mathbf{x}_{1:n}=x_{1:n})=1$ by the laws of conditional probability. Hence, the assumption that no MAP estimator for $T$ exists must be false, and therefore the class of MAP estimators is strongly properly consistent.
\end{proof}

The results concerning posterior consistency can now be neatly summarised in the following main result:

\begin{thm}\label{T:PropCons}
For any estimation problem $T$ the following conditions are all equivalent:
\begin{enumerate}
\item $T$ has a consistent posterior.
\item There exists a consistent estimator for $T$.
\item There exists a strongly consistent estimator for $T$.
\end{enumerate}
\end{thm}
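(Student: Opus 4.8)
The plan is to establish the three equivalences as a cycle $(1)\Rightarrow(3)\Rightarrow(2)\Rightarrow(1)$, assembling the result almost entirely from the lemmas and corollary already proved in this section.

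For $(1)\Rightarrow(3)$ I would invoke Corollary~\ref{C:MAPcons}: the MAP estimator class is strongly properly consistent, so if $T$ has a consistent posterior then (by the definition of strong proper consistency, i.e.\ Definition~\ref{D:classcons}) there exists at least one MAP estimator for $T$ and every such estimator is strongly consistent on $T$; this directly exhibits a strongly consistent estimator for $T$. For $(3)\Rightarrow(2)$ the implication is immediate from the definitions: a strongly consistent estimator converges almost surely to $\theta$ under $P_\theta$ for each $\theta\in\Theta$ (Definition~\ref{D:consstrong}), and almost sure convergence implies convergence in probability, so the very same estimator witnesses consistency in the sense of Definition~\ref{D:cons1}. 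For $(2)\Rightarrow(1)$ I would argue by contraposition: Lemma~\ref{L:impcons} says that no estimator is consistent for an estimation problem with an inconsistent posterior, so if a consistent estimator for $T$ exists, $T$ cannot have an inconsistent posterior, i.e.\ it has a consistent posterior.

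I do not expect a substantive obstacle here, since the heavy lifting has already been done: the necessity direction rests on the subsequence-extraction argument of Lemma~\ref{L:impcons}, and the sufficiency direction on Lemma~\ref{L:disccons} together with the existence argument in Corollary~\ref{C:MAPcons}. The one point that requires a moment's care is that the correct hypothesis in statement $(2)$ is the \emph{existence} of a consistent estimator, not the consistency of all estimators in some class; it is precisely the existence claim inside Corollary~\ref{C:MAPcons} (that a MAP estimator always exists when the posterior is consistent) that closes this gap in the $(1)\Rightarrow(3)$ step, and I would make sure to flag this explicitly rather than let it pass silently.
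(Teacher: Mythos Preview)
Your proposal is correct and follows essentially the same approach as the paper: the paper proves the cycle $3\Rightarrow 2\Rightarrow 1\Rightarrow 3$ using almost sure convergence implies convergence in probability, Lemma~\ref{L:impcons}, and Corollary~\ref{C:MAPcons}, exactly as you do. Your explicit remark about needing the existence part of Corollary~\ref{C:MAPcons} for the $(1)\Rightarrow(3)$ step is a nice touch that the paper leaves implicit.
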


\begin{proof}
$3\implies 2$: Every estimator which is strongly consistent is consistent since almost sure convergence implies convergence in probability.\\
$2\implies 1$: Lemma \ref{L:impcons}\\
$1\implies 3$: Corollary \ref{C:MAPcons} gives an example of a strongly consistent estimator for any problem with a consistent posterior.
\end{proof}  

While we have seen that the MAP estimator class is strongly properly consistent, this does not hold for Bayes estimator classes in general, and, in fact, no distribution-based Bayes estimator class is properly consistent.

\begin{thm}
No distribution-based Bayes estimator class is properly consistent.
\end{thm}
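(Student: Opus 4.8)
The plan is to fix an arbitrary distribution-based general loss function $\mathscr{L}$ and construct a single estimation problem $T$ that has a consistent posterior yet on which the associated Bayes estimator class contains an estimator that is not consistent; by Definition~\ref{D:classcons} this suffices. The one feature of $\mathscr{L}$ I will use is that, since $\mathscr{L}^{(T_n)}$ depends on a pair of parameters only through their distributions in $T_n=(\boldsymbol{\theta},\mathbf{x}_{1:n})$, we have $\mathscr{L}^{(T_n)}(\theta,\theta')=\mathscr{L}^{(T_n)}(\theta,\theta)=0$ for any two parameters with $P_\theta^{(1:n)}=P_{\theta'}^{(1:n)}$. So a distribution-based loss is blind to the difference between parameters that currently induce the same distribution on the observations seen so far, and the whole proof exploits this.

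For the problem, take $\Theta=\setN$ (discrete, so $\{0\}$ is a neighbourhood of $0$), each $X_i=\{0,1\}$, and $\Pi$ any everywhere-positive, strictly decreasing prior such as $\Pi(\{k\})\propto 2^{-k}$. Let $P_0$ be the point mass on $(0,0,0,\dots)$ and, for $k\ge1$, let $P_k$ be the point mass on the sequence with first $k$ coordinates $0$ and all later coordinates $1$; these are pairwise distinct, so $T$ is a legitimate problem. I would check it has a consistent posterior via Lemma~\ref{L:disconspost}: if the true value is $k\ge1$ the data is deterministic and the length-$(k+1)$ prefix $0^{k}1$ is consistent only with $P_k$, giving $\prob(\boldsymbol{\theta}=k\mid\mathbf{x}_{1:n})=1$ for $n\ge k+1$; if the true value is $0$ the data is $0^{\infty}$, the prefix $0^{n}$ has positive probability under $P_0$ and under $P_l$ for every $l\ge n$ only, and Bayes' rule gives $\prob(\boldsymbol{\theta}=0\mid\mathbf{x}_{1:n}=0^{n})=\Pi(\{0\})/\bigl(\Pi(\{0\})+\sum_{l\ge n}\Pi(\{l\})\bigr)\to1$.

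Now fix the prefix $0^{n}$, which occurs almost surely when $\boldsymbol{\theta}=0$, and let $S_n\defeq\{0\}\cup\{l:l\ge n\}$ be the support of the corresponding posterior. Every parameter in $S_n$ has $n$-marginal equal to the point mass on $0^{n}$, so by the key fact $\mathscr{L}^{(T_n)}(\theta,\theta')=0$ for all $\theta,\theta'\in S_n$; hence $E(\mathscr{L}^{(T_n)}(\boldsymbol{\theta},\theta_0)\mid\mathbf{x}_{1:n}=0^{n})=0$ for every $\theta_0\in S_n$, which (as $\mathscr{L}^{(T_n)}\ge0$) is the global minimum in $\theta_0$. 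Define $\hat{\theta}_n$ by $\hat{\theta}_n(0^{n})=n$ and, on every other prefix, as any minimiser of the posterior expected loss --- a minimiser exists since on each positive-probability prefix other than $0^{n}$ the posterior is a point mass. Each $\hat{\theta}_n$ is then a Bayes estimator for $\mathscr{L}^{(T_n)}$, so $(\hat{\theta}_n)_{n\in\setN}$ lies in the distribution-based Bayes estimator class for $T$; but under $\boldsymbol{\theta}=0$ the observations are $0^{\infty}$ almost surely, so $\randHat_n=n$ and $\prob(\randHat_n\in\{0\}\mid\boldsymbol{\theta}=0)=0\not\to1$. Thus this estimator is not consistent at $0$, the class is not consistent over problems with consistent posteriors, and --- $\mathscr{L}$ being arbitrary --- no distribution-based Bayes estimator class is properly consistent.

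The main obstacle is devising $T$: it has to satisfy the two competing demands of lying in the problem class over which proper consistency is required (a consistent posterior, which forces \emph{every} parameter, $0$ included, to become identifiable in the limit) while simultaneously keeping $0$ perpetually tied, in every $n$-marginal, to an infinite tail of other parameters, so that a distribution-based loss can never prefer $0$ over them at any finite stage. The point-mass construction does this, but one still has to confirm that the ensuing ties among posterior expected losses are genuine global minima --- so that a \emph{valid} Bayes estimator is free to realise the bad choice --- and that a Bayes estimator exists on every $T_n$, so the failure is a genuine inconsistency rather than the vacuous case of an empty estimator set. As a sanity check distinguishing this from Corollary~\ref{C:MAPcons}: on the same problem MAP returns $0$ at every stage once $\boldsymbol{\theta}=0$, precisely because $\Pi$ is strictly decreasing, so the inconsistency really is an artefact of forgetting which parameter is which.
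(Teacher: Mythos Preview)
Your proof is correct and follows essentially the same approach as the paper's: both construct a deterministic problem on $\{0,1\}^{\setN}$ with $\Theta=\setN\cup\{0\}$ where $\theta=0$ gives a constant sequence and $\theta=k\ge1$ gives $k$ copies of one symbol followed by the other, so that at every finite stage $\theta=0$ shares its marginal with the tail $\{l\ge n\}$ and a distribution-based loss cannot separate them; your version simply swaps the roles of $0$ and $1$ and is more explicit in verifying posterior consistency and the existence of the Bayes estimator on every prefix.
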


\begin{proof}
Consider the large sample estimation problem $T=(\mathbf{x},\boldsymbol{\theta})$, where $\Theta=\setN\cup\{0\}$, $\prob(\boldsymbol{\theta}=\theta)=2^{-1-\theta}$, $X_n=\{0,1\}$ for all $n\in\setN$,  and the distribution of $\mathbf{x}$ is given by:

\begin{equation*}
\prob(\mathbf{x}_n=1|\boldsymbol{\theta}=\theta)=\left\{\begin{array}{ll}
1 & \text{if }\theta=0\text{ or }n\leq \theta\\
0 & \text{otherwise.}
\end{array}
\right.
\end{equation*}

Thus $\mathbf{x}$ is deterministic when conditioned on $\boldsymbol{\theta}=\theta$, and takes the value of a sequence of $\theta$ ones followed by an infinite sequence of zeros, or, if $\theta=0$, an infinite sequence of ones. Note that since the MAP estimator is consistent on $T$, $T$ has a consistent posterior.

Now consider the estimator $(\hat{\theta}_n)_{n\in\setN}$ for this problem where $\hat{\theta}_n(1^n0^m)=n$. For any sequence of observations, and any distribution-based loss function $\hat{\theta}$ has a posterior expected loss of 0, and so is a member of every distribution-based estimator class. However $\prob(\boldsymbol{\hat{\theta}}_n\rightarrow \infty|\boldsymbol{\theta}=0)=1$, so $\hat{\theta}$ is inconsistent. Therefore, no distribution-based Bayes estimator class is properly consistent.
\end{proof}

\subsection{Frequentist Proper Consistency}
Up until this point, our approach to properly consistent estimator classes has been entirely Bayesian: proper consistency was defined using the Bayesian posterior and we have thus far only considered the requirements for Bayes estimators to be properly consistent. In this subsection we consider a frequentist approach and show that Bayesianism is unnecessary for both defining proper consistency and for attaining properly consistent estimator classes. In particular, we will show that the consistent posterior property is equivalent to the frequentist property of having \emph{convergent likelihood ratios}. 

The equivalence between posterior consistency and convergent likelihood ratios implies that for any properly consistent Bayesian estimator class  $\mathscr{C}$, a frequentist estimator class $\mathscr{C}^{(F)}$ can be constructed by simply choosing a canonical prior $\Pi^{(F)}_\Theta$ for each possible $\Theta$, and then declaring an estimator $\hat{\theta}$ on estimation problem $T=\langle X, \Theta, \mathscr{P},\Pi\rangle$ to be a member of $\mathscr{C}^{(F)}$ if and only if the same estimator on $ T^{(F)}=\langle X, \Theta, \mathscr{P},\Pi^{(F)}_\Theta \rangle$ is a member of $\mathscr{C}$. 

While it is thus possible to form a frequentist properly consistent estimator class, the most popular frequentist estimator class, Maximum Likelihood, is not properly consistent. There are famous examples of the inconsistency of Maximum Likelihood which can be used to show failure of proper consistency. (For instance, in \cite{Hannan1960} a discrete problem is given, where Maximum Likelihood fails to be consistent, despite the existence of consistent estimators for the problem.) In the conclusion of this section, however, we show that the Maximum Likelihood estimator class is not properly consistent by means of an original example, which we introduce because it provides the motivation for our notion of \emph{distinctive likelihoods} to be introduced in the next section.

\begin{defi}\label{D:convratio} A large sample estimation problem has \emph{convergent likelihood ratios} if, for all $\theta,\theta'\in\Theta$ such that $\theta'\ne\theta$,  the likelihood ratio of $\theta'$ over $\theta$ converges almost surely to 0 conditioned on $\theta$ being the true model. That is:
\[\prob\left(\lim_{n\to\infty}\frac{P_{\theta'}^{(1:n)}(\mathbf{x}_{1:n})}{P_{\theta}^{(1:n)}(\mathbf{x}_{1:n})}=0\middle|\boldsymbol{\theta}=\theta\right)=1\]
\end{defi}

Note that, unlike Definition \ref{D:conspost}, Definition \ref{D:convratio} makes no use of the prior distribution on $\boldsymbol{\theta}$.

\begin{lemma}\label{L:freqcons}
A large sample estimation problem has a consistent posterior if and only if it has convergent likelihood ratios.
\end{lemma}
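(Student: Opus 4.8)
The plan is to prove the two directions separately, in both cases exploiting the discreteness of $\Theta$ to pass between the posterior and the likelihood ratios via Bayes' theorem. The key observation is that, for $\mathbf{x}$ distributed according to $P_\theta$ and for any fixed $n$,
\[
\prob(\boldsymbol{\theta}=\theta'\mid\mathbf{x}_{1:n})=\frac{\Pi(\{\theta'\})\,P_{\theta'}^{(1:n)}(\mathbf{x}_{1:n})}{\sum_{\tilde\theta\in\Theta}\Pi(\{\tilde\theta\})\,P_{\tilde\theta}^{(1:n)}(\mathbf{x}_{1:n})},
\]
so that dividing numerator and denominator by $P_\theta^{(1:n)}(\mathbf{x}_{1:n})$ expresses the posterior entirely in terms of the likelihood ratios $R_n^{(\theta')}\defeq P_{\theta'}^{(1:n)}(\mathbf{x}_{1:n})/P_\theta^{(1:n)}(\mathbf{x}_{1:n})$ and the fixed prior weights. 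By Lemma~\ref{L:disconspost}, a consistent posterior is equivalent to $\prob(\boldsymbol{\theta}=\theta\mid\mathbf{x}_{1:n})\to1$ a.s.\ under $P_\theta$, which by the displayed identity is equivalent to $\sum_{\theta'\ne\theta}\Pi(\{\theta'\})R_n^{(\theta')}\big/\Pi(\{\theta\})\to0$ a.s., i.e.\ to $\sum_{\theta'\ne\theta}\Pi(\{\theta'\})R_n^{(\theta')}\to0$ a.s.

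For the direction \emph{convergent likelihood ratios $\Rightarrow$ consistent posterior}: assume for each $\theta'\ne\theta$ that $R_n^{(\theta')}\to0$ a.s.\ under $P_\theta$. Since $\Theta$ is countable, a countable union of null sets is null, so almost surely $R_n^{(\theta')}\to0$ for \emph{all} $\theta'\ne\theta$ simultaneously. I then need to upgrade this pointwise convergence to convergence of the weighted sum $\sum_{\theta'\ne\theta}\Pi(\{\theta'\})R_n^{(\theta')}$. This is the main obstacle, since $\Theta$ may be infinite and dominated convergence does not apply directly to the random quantities $R_n^{(\theta')}$ (they need not be uniformly bounded along a sample path). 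The trick is to take expectations first: $\E(R_n^{(\theta')}\mid\boldsymbol\theta=\theta)=\sum_{x}P_\theta^{(1:n)}(x)\,P_{\theta'}^{(1:n)}(x)/P_\theta^{(1:n)}(x)=\sum_x P_{\theta'}^{(1:n)}(x)\le1$, so $\E\big(\sum_{\theta'\ne\theta}\Pi(\{\theta'\})R_n^{(\theta')}\mid\boldsymbol\theta=\theta\big)\le\sum_{\theta'\ne\theta}\Pi(\{\theta'\})\le1$, uniformly in $n$. Combined with the a.s.\ pointwise convergence $R_n^{(\theta')}\to0$, one applies a Fatou/dominated-convergence argument \emph{conditioned on} a tail event, or more cleanly: fix $\varepsilon>0$, pick a finite $F\subseteq\Theta\setminus\{\theta\}$ with $\sum_{\theta'\notin F\cup\{\theta\}}\Pi(\{\theta'\})<\varepsilon$; the finite sum $\sum_{\theta'\in F}\Pi(\{\theta'\})R_n^{(\theta')}\to0$ a.s.\ since it is a finite sum of a.s.-null sequences, while the tail is bounded in expectation by $\varepsilon$, hence (by Markov, along a subsequence, or by a direct a.s.\ argument using that $\sum_n$ of a summable-in-expectation tail can be controlled) the tail contributes at most $O(\varepsilon)$ a.s.\ in the limsup. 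Letting $\varepsilon\to0$ through a sequence gives $\sum_{\theta'\ne\theta}\Pi(\{\theta'\})R_n^{(\theta')}\to0$ a.s., hence the posterior is consistent. (A slightly slicker route: note $\prob(\boldsymbol\theta=\theta\mid\mathbf{x}_{1:n})$ is bounded in $[0,1]$ and, being a bounded submartingale-like quantity, its a.s.\ limit exists by Lemma~\ref{L:levy} applied to $\mathbf{1}_{\{\boldsymbol\theta=\theta\}}$; then identify the limit as $1$ using the pointwise ratio convergence together with Fatou to handle the denominator.)

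For the direction \emph{consistent posterior $\Rightarrow$ convergent likelihood ratios}: assume $\sum_{\theta'\ne\theta}\Pi(\{\theta'\})R_n^{(\theta')}\to0$ a.s.\ under $P_\theta$. Since every term in this nonnegative sum is dominated by the whole sum, for each individual $\theta'\ne\theta$ we get $\Pi(\{\theta'\})R_n^{(\theta')}\to0$ a.s., and since $\Pi(\{\theta'\})>0$ (priors are everywhere positive, Definition~\ref{D:bayessmallProblemtup}), dividing out the constant gives $R_n^{(\theta')}\to0$ a.s.\ under $P_\theta$. As this holds for every ordered pair $\theta'\ne\theta$, the problem has convergent likelihood ratios. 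This direction is essentially immediate once the Bayes-rule identity is in place; all the real work is in the forward direction's interchange of limit and infinite sum. I expect the cleanest writeup to isolate the identity $\prob(\boldsymbol\theta=\theta\mid\mathbf{x}_{1:n})^{-1}=1+\Pi(\{\theta\})^{-1}\sum_{\theta'\ne\theta}\Pi(\{\theta'\})R_n^{(\theta')}$ as a preliminary display, reducing both directions to statements about that sum, and then to handle the infinite-sum convergence via the uniform expectation bound $\E(R_n^{(\theta')}\mid\boldsymbol\theta=\theta)\le1$.
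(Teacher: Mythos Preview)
Your direction ``consistent posterior $\Rightarrow$ convergent likelihood ratios'' is correct and essentially the paper's argument: once the Bayes identity is in place, each term $\Pi(\{\theta'\})R_n^{(\theta')}$ is dominated by the whole nonnegative sum, and positivity of the prior lets you divide out.

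The gap is in the other direction. Your primary argument splits $S_n\defeq\sum_{\theta'\ne\theta}\Pi(\{\theta'\})R_n^{(\theta')}$ into a finite piece (fine) and a tail $T_n$ with $\E(T_n\mid\boldsymbol{\theta}=\theta)\le\varepsilon$ uniformly in $n$. But a uniform expectation bound does \emph{not} control $\limsup_n T_n$ almost surely: neither ``Markov along a subsequence'' nor ``summable-in-expectation tail'' applies here (the expectations are bounded, not summable), and in general one can have $\E T_n\le\varepsilon$ for all $n$ yet $\limsup_n T_n=1$ a.s. Your parenthetical ``slicker route'' invokes L\'evy, which is the right tool, but ``Fatou to handle the denominator'' gives the inequality in the wrong direction: Fatou on the series yields $\liminf_n S_n\ge\sum_{\theta'}\Pi(\{\theta'\})\liminf_n R_n^{(\theta')}=0$, which is vacuous; you need $\limsup_n S_n\le 0$.

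The paper avoids the interchange altogether. From $R_n^{(\theta')}\to 0$ a.s.\ and the Bayes ratio identity it gets $\prob(\boldsymbol{\theta}=\theta'\mid\mathbf{x}_{1:n})\to 0$ a.s.\ for each fixed $\theta'\ne\theta$ (the denominator $\prob(\boldsymbol{\theta}=\theta\mid\mathbf{x}_{1:n})$ is bounded by $1$). L\'evy then gives $\prob(\boldsymbol{\theta}=\theta'\mid\mathbf{x})=0$ a.s.\ for each such $\theta'$; countability of $\Theta$ makes this hold simultaneously, and the constraint $\sum_{\tilde\theta}\prob(\boldsymbol{\theta}=\tilde\theta\mid\mathbf{x})=1$ forces $\prob(\boldsymbol{\theta}=\theta\mid\mathbf{x})=1$ a.s. A second application of L\'evy brings this back to finite $n$. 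The sum-to-one identity at $n=\infty$ is what replaces your attempted dominated-convergence step at finite $n$.

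If you want to rescue your own route, note that each $R_n^{(\theta')}$ is a nonnegative $P_\theta$-supermartingale, hence so is $T_n$; therefore $T_n\to T_\infty$ a.s., and Fatou on the \emph{expectation} gives $\E(T_\infty)\le\varepsilon$. Since $S_n^F\to 0$ a.s., $S_\infty=T_\infty$ a.s., so $\E(S_\infty)\le\varepsilon$ for every $\varepsilon>0$, whence $S_\infty=0$ a.s. That closes the gap, but it is a genuinely different ingredient from what you wrote.
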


\begin{proof}
\emph{only if}: Suppose first that $T=(\boldsymbol{\theta},\mathbf{x})$ is a large sample estimation problem with a consistent posterior. Then when $\mathbf{x}$ is distributed according $P_\theta$, $\prob(\boldsymbol{\theta}=\theta|\mathbf{x})=1$ almost surely by Lemma ~\ref{L:disconspost} and so $\prob(\boldsymbol{\theta}=\theta'|\mathbf{x})=0$ almost surely for any $\theta'\ne\theta$. Thus, taking a fixed $\theta'\ne\theta$, by L\'{e}vy's upward theorem both of the following hold  with probability 1: $\lim_{n\to\infty}\prob(\boldsymbol{\theta}=\theta|\mathbf{x}_{1:n})=1$ and $\lim_{n\to\infty}\prob(\boldsymbol{\theta}=\theta'|\mathbf{x}_{1:n})=0$. Therefore $\lim_{n\to\infty} \left(\frac{\prob(\boldsymbol{\theta}=\theta'|\mathbf{x}_{1:n})}{\prob(\boldsymbol{\theta}=\theta|\mathbf{x}_{1:n})}\right)=0$ almost surely.

Now, let $S_n$ be the support of $\mathbf{x}_{1:n}$, so that for all $x\in S$ we get $\frac{\prob(\boldsymbol{\theta}=\theta'|\mathbf{x}_{1:n}=x_{1:n})}{\prob(\boldsymbol{\theta}=\theta|\mathbf{x}_{1:n}=x_{1:n})}= \frac{\prob(\boldsymbol{\theta}=\theta')}{\prob(\boldsymbol{\theta}=\theta)}\frac{P_{\theta'}^{(1:n)}(x_{1:n})}{P_{\theta}^{(1:n)}(x_{1:n})}$ by applying Bayes' theorem to the numerator and denominator. Then since $\prob(\mathbf{x}_{1:n}\in S)=1$, $\frac{\prob(\boldsymbol{\theta}=\theta'|\mathbf{x}_{1:n})}{\prob(\boldsymbol{\theta}=\theta|\mathbf{x}_{1:n})}=\frac{\prob(\boldsymbol{\theta}=\theta')}{\prob(\boldsymbol{\theta}=\theta)}\frac{P_{\theta'}^{(1:n)}(\mathbf{x}_{1:n})}{P_{\theta}^{(1:n)}(\mathbf{x}_{1:n})}$ almost surely. Therefore, when $\mathbf{x}$ is distributed according to $P_\theta$, $\lim_{n\to\infty} \frac{\prob(\boldsymbol{\theta}=\theta')}{\prob(\boldsymbol{\theta}=\theta)}\frac{P_{\theta'}^{(1:n)}(\mathbf{x}_{1:n})}{P_{\theta}^{(1:n)}(\mathbf{x}_{1:n})}=0$ almost surely. Since $0<\frac{\prob(\boldsymbol{\theta}=\theta')}{\prob(\boldsymbol{\theta}=\theta)}<\infty$ and this value does not depend on $n$, by the algebraic limit theorem \cite[Thm 2.3.3]{Abbott2000} $\lim_{n\to\infty}\frac{P_{\theta'}^{(1:n)}(\mathbf{x}_{1:n})}{P_{\theta}^{(1:n)}(\mathbf{x}_{1:n})}=0\,a.s.$, proving the first half of the claim.

\emph{if}: Suppose for any $\theta,\theta'\in\Theta$ such that $\theta'\ne\theta$, $\prob\left(\lim_{n\to\infty}\frac{P_{\theta'}^{(1:n)}(\mathbf{x}_{1:n})}{P_{\theta}^{(1:n)}(\mathbf{x}_{1:n})}=0\middle|\boldsymbol{\theta}=\theta\right)=1$. Applying Bayes' theorem this implies that when $\mathbf{x}$ is distributed according to $P_\theta$

\begin{equation}\label{Eqn:flippedRN}
\lim_{n\to\infty}\frac{\prob(\boldsymbol{\theta}=\theta)\prob(\boldsymbol{\theta}=\theta'|\mathbf{x}_{1:n})}{\prob(\boldsymbol{\theta}=\theta')\prob(\boldsymbol{\theta}=\theta|\mathbf{x}_{1:n})}=0\,a.s..
\end{equation}

Since $0<\frac{\prob(\boldsymbol{\theta}=\theta)}{\prob(\boldsymbol{\theta}=\theta')}<\infty$ and this value does not depend on $n$, equation \eqref{Eqn:flippedRN} implies that $\lim_{n\to\infty}\frac{\prob(\boldsymbol{\theta}=\theta'|\mathbf{x}_{1:n})}{\prob(\boldsymbol{\theta}=\theta|\mathbf{x}_{1:n})}=0$ almost surely.

By the algebraic limit theorem, we conclude that $\lim_{n\to\infty}\prob(\boldsymbol{\theta}=\theta'|\mathbf{x}_{1:n})=0$ and so by L\'{e}vy's upward theorem, $\prob(\boldsymbol{\theta}=\theta'|\mathbf{x})=0$ for all $\theta'\ne \theta$.
Since this holds for all $\theta'\ne\theta$ and $\sum_{\tilde{\theta}\in\Theta}\prob(\boldsymbol{\theta}=\tilde{\theta}|\mathbf{x})=1$ because $\Theta$ is discrete, $\prob(\boldsymbol{\theta}=\theta|\mathbf{x})=1$ almost surely. Hence, by L\'{e}vy's upward theorem, when $\mathbf{x}$ is distributed according to $P_\theta$, $\lim_{n\to\infty}\prob(\boldsymbol{\theta}=\theta|\mathbf{x}_{1:n})=1$ almost surely. Thus, the posterior is consistent by Lemma \ref{L:disconspost}.
\end{proof}

\begin{cor}\label{C:feqprop}
If any of the conditions in Theorem \ref{T:PropCons} are satisfied by estimation problem $ T=\langle X, \Theta, \mathscr{P},\Pi\rangle$, then they are also satisfied for any other estimation problem $T'=\langle X, \Theta, \mathscr{P},\Pi'\rangle$, identical to $T$ except for the prior distribution.
\end{cor}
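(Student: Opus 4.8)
The plan is to route all three conditions through the prior-free characterisation supplied by Lemma~\ref{L:freqcons}. By Theorem~\ref{T:PropCons}, conditions~1, 2 and 3 are mutually equivalent for any single estimation problem, so each of them holds for $T$ if and only if $T$ has a consistent posterior, and likewise each holds for $T'$ if and only if $T'$ has a consistent posterior. It therefore suffices to prove that $T$ has a consistent posterior precisely when $T'$ does.

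Next I would apply Lemma~\ref{L:freqcons} to both problems: $T$ has a consistent posterior if and only if it has convergent likelihood ratios, and the same statement holds for $T'$. So the corollary reduces to showing that $T$ has convergent likelihood ratios if and only if $T'$ does.

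Finally, I would unpack Definition~\ref{D:convratio} and observe that every ingredient of the condition it imposes --- the likelihood ratios $P^{(1:n)}_{\theta'}(\mathbf{x}_{1:n})/P^{(1:n)}_{\theta}(\mathbf{x}_{1:n})$ and the distribution of $\mathbf{x}$ under the conditioning event $\boldsymbol{\theta}=\theta$ (which is just $P_\theta$) --- is determined by $X$, $\Theta$ and $\mathscr{P}$ alone. The prior enters only insofar as it must assign positive mass to each $\theta$ so that $\prob(\boldsymbol{\theta}=\theta)$ is a legitimate conditioning event, and this is guaranteed for both $\Pi$ and $\Pi'$ by the definition of a Bayesian estimation problem. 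Since $T$ and $T'$ agree on $X$, $\Theta$ and $\mathscr{P}$, the defining statement of convergent likelihood ratios is word-for-word identical for the two problems; in particular one holds if and only if the other does. Chaining the three equivalences closes the argument.

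There is no genuine obstacle here: the content of the corollary is carried entirely by the equivalence in Lemma~\ref{L:freqcons} together with the observation, already noted immediately after Definition~\ref{D:convratio}, that convergent likelihood ratios is a prior-independent property. The only point needing any care is the harmless remark that both priors are everywhere positive, so conditioning on $\boldsymbol{\theta}=\theta$ is meaningful in both problems.
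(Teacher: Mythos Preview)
Your proposal is correct and follows essentially the same route as the paper: apply Lemma~\ref{L:freqcons} to pass from posterior consistency to convergent likelihood ratios, observe that the latter depends only on $X$, $\Theta$ and $\mathscr{P}$ (not on the prior), and conclude via Lemma~\ref{L:freqcons} again. The paper's version is terser, but your added remark that both priors are everywhere positive (so conditioning on $\boldsymbol{\theta}=\theta$ is legitimate in both problems) is a reasonable point of care.
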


\begin{proof}
By Lemma \ref{L:freqcons} if $T=\langle X, \Theta, \mathscr{P},\Pi\rangle$ satisfies any of the conditions of Theorem \ref{T:PropCons} then is has convergent likelihood ratios. But since whether a problem has convergent likelihood ratios is independent of the problem's prior $T'=\langle X, \Theta, \mathscr{P},\Pi'\rangle$ also has convergent likelihood ratios, and so by Lemma \ref{L:freqcons} satisfies all of the conditions of Theorem \ref{T:PropCons}.
\end{proof}

As an example of convergent likelihood ratios, consider the
large sample estimation problem of estimating
the probability parameter of a sequence of
i.i.d., Bernoulli-distributed random variables. This
will form a running example, so it is worth defining explicitly.

\begin{defi}
Let $p:\Theta\to(0,1)$ be a one-to-one function.

A \emph{binomial probability estimation problem} is a large sample estimation
problem where for each $\theta\in\Theta$, each $\mathbf{x}_i$ is, given
$\boldsymbol{\theta}=\theta$, independently Bernoulli-distributed with
probability parameter $p(\theta)$.
\end{defi}

\begin{lemma}\label{L:Bin_CLR}
Every binomial probability estimation problem has convergent likelihood ratios.
\end{lemma}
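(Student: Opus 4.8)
The plan is to reduce the claim to the strong law of large numbers applied to the running sum of the Bernoulli observations, together with the strict positivity of the Kullback--Leibler divergence between two distinct Bernoulli distributions.

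First I would fix $\theta\ne\theta'$ in $\Theta$, write $p=p(\theta)$ and $p'=p(\theta')$ (both in $(0,1)$, and distinct because $p$ is one-to-one), and condition on $\boldsymbol{\theta}=\theta$. Writing $S_n=\sum_{i=1}^n\mathbf{x}_i$ for the number of ones, the independence assumption gives the explicit likelihood ratio
\[\frac{P_{\theta'}^{(1:n)}(\mathbf{x}_{1:n})}{P_{\theta}^{(1:n)}(\mathbf{x}_{1:n})}=\left(\frac{p'}{p}\right)^{S_n}\left(\frac{1-p'}{1-p}\right)^{n-S_n},\]
so that
\[\frac{1}{n}\log\frac{P_{\theta'}^{(1:n)}(\mathbf{x}_{1:n})}{P_{\theta}^{(1:n)}(\mathbf{x}_{1:n})}=\frac{S_n}{n}\log\frac{p'}{p}+\left(1-\frac{S_n}{n}\right)\log\frac{1-p'}{1-p},\]
where every logarithm is finite because $p,p'\in(0,1)$.

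Next, since conditioned on $\boldsymbol{\theta}=\theta$ the $\mathbf{x}_i$ are i.i.d.\ Bernoulli with parameter $p$, the strong law of large numbers gives $S_n/n\to p$ almost surely. Substituting, the displayed right-hand side converges almost surely to
\[p\log\frac{p'}{p}+(1-p)\log\frac{1-p'}{1-p}=-D_{\text{KL}}\bigl(\text{Bern}(p)\,\|\,\text{Bern}(p')\bigr),\]
which is strictly negative: the KL divergence of two distributions is nonnegative, with equality only when they coincide, and $\text{Bern}(p)\ne\text{Bern}(p')$ since $p\ne p'$. Call this limit $-c$ with $c>0$. Then, almost surely, $\frac1n\log(\text{ratio}_n)<-c/2$ for all sufficiently large $n$, hence $\log(\text{ratio}_n)\to-\infty$ and $\text{ratio}_n\to 0$. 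Since this holds for every pair $\theta\ne\theta'$ in $\Theta$, the problem has convergent likelihood ratios in the sense of Definition~\ref{D:convratio}.

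I do not anticipate a genuine obstacle: the three points needing care are that the observation space $\{0,1\}$ makes the likelihood ratio an elementary closed-form expression, that $p$ and $p'$ lying strictly inside $(0,1)$ keeps all logarithms finite, and that one must invoke the \emph{strict} case of the Gibbs inequality (distinctness of the two Bernoulli laws) to obtain $c>0$ rather than merely $c\ge 0$.
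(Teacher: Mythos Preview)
Your proof is correct and follows essentially the same route as the paper: both fix $\theta\ne\theta'$, use the closed-form Bernoulli likelihood, apply the strong law of large numbers to the empirical proportion of ones, and then use that the ``per-observation'' log-ratio (equivalently, the base of the $n$th power) converges to a strictly negative constant. The only cosmetic difference is that the paper phrases the key inequality as ``$f(t)=t^{p}(1-t)^{1-p}$ has its unique maximum at $t=p$'' and works with the ratio $f(p')/f(p)<1$ raised to the $n$th power, whereas you take logarithms and identify the limit explicitly as $-D_{\mathrm{KL}}(\mathrm{Bern}(p)\|\mathrm{Bern}(p'))$; these are the same statement.
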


\begin{proof}
A Bernoulli distribution admits two possible values. Under the assumption
$\boldsymbol{\theta}=\theta$, one of these values has probability
$p(\theta)$ and the other $1-p(\theta)$. Let $k(n)$ be the number of
observations, among $\mathbf{x}_{1:n}$, to attain the value of
probability $p(\theta)$, and let $\rho(n)=k(n)/n$.

The strong law of large numbers states that $\rho(n)$ converges almost
surely to $p(\theta)$.

Consider, now, that for any $\theta'\in\Theta$,
\begin{equation}\label{Eq:BinL}
P^{(1:n)}_{\theta'}(\mathbf{x}_{1:n})=p(\theta')^{k(n)}(1-p(\theta'))^{n-k(n)}
=\left(p(\theta')^{\rho(n)}(1-p(\theta'))^{1-\rho(n)}\right)^n,
\end{equation}
and that as $\rho(n)$ converges to $p(\theta)$, the value of
$p(\theta')^{\rho(n)}(1-p(\theta'))^{1-\rho(n)}$ converges to
$p(\theta')^{p(\theta)}(1-p(\theta'))^{1-p(\theta)}$.

The function $f(t)=t^{p(\theta)}(1-t)^{1-p(\theta)}$ has a unique maximum
at $t=p(\theta)$. Hence, with probability $1$ given
$\boldsymbol{\theta}=\theta$ and $\theta'\ne\theta$,
\begin{equation}\label{Eq:Bin_ratio_lim}
\lim_{n\to\infty} \frac{p(\theta')^{\rho(n)}(1-p(\theta'))^{1-\rho(n)}}{p(\theta)^{\rho(n)}(1-p(\theta))^{1-\rho(n)}}=\frac{f(p(\theta'))}{f(p(\theta))}<1,
\end{equation}
so combining \eqref{Eq:BinL} and \eqref{Eq:Bin_ratio_lim}, we get
\[
\lim_{n\to\infty} \frac{P^{(1:n)}_{\theta'}(\mathbf{x}_{1:n})}{P^{(1:n)}_{\theta}(\mathbf{x}_{1:n})}=0.
\]
\end{proof}

Finally we provide our example of maximum likelihood inconsistency.

\begin{thm}\label{T:MLimp}
Maximum Likelihood is not properly consistent
\end{thm}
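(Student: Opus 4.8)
The plan is to produce a single large-sample estimation problem $T$ that has convergent likelihood ratios---hence, by Lemma~\ref{L:freqcons} and Theorem~\ref{T:PropCons}, a consistent posterior and at least one consistent estimator---but on which some (in fact, every) Maximum Likelihood estimator is inconsistent. Since a properly consistent estimator class must be consistent over \emph{all} problems with consistent posteriors, the existence of such a $T$ establishes the theorem. For $T$ I would take a binomial probability estimation problem: let $\Theta=\setN$, fix a bijection $p\colon\setN\to\mathbb{Q}\cap[1/4,3/4]$, set $X_n\defeq\{0,1\}$ for all $n$, let $\Pi$ be any prior with $\Pi(\{\theta\})>0$ for every $\theta$ (its choice is immaterial, as neither convergent likelihood ratios nor ML membership depends on it), and let the $\mathbf{x}_i$ be i.i.d.\ with $\prob(\mathbf{x}_i=1\mid\boldsymbol{\theta}=\theta)=p(\theta)$. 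Then Lemma~\ref{L:Bin_CLR} immediately gives that $T$ has convergent likelihood ratios.

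The technical heart of the argument is to identify what an ML estimator does on $T$. Writing $s=s(x_{1:n})\defeq\sum_{i=1}^n x_i$, the likelihood is $P_\theta^{(1:n)}(x_{1:n})=p(\theta)^{s}(1-p(\theta))^{n-s}$, so by Definition~\ref{D:ML} an ML estimate maximises $h(t)\defeq t^{s}(1-t)^{n-s}$ as $t$ ranges over $p(\Theta)=\mathbb{Q}\cap[1/4,3/4]$. On $[0,1]$ the function $h$ is continuous and strictly unimodal with peak at $s/n$ (and strictly monotone in the boundary cases $s\in\{0,n\}$), so over the interval $[1/4,3/4]$ it attains its maximum at the single point $m=m(x_{1:n})\defeq\min\{\max\{1/4,\,s/n\},\,3/4\}$, which is a rational lying in $[1/4,3/4]$ and therefore in $p(\Theta)$. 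Since $p(\Theta)$ is dense in $[1/4,3/4]$ and contains $m$, the supremum of the likelihood over $\Theta$ is attained, and, $p$ being injective, at a unique $\theta$; thus an ML estimator for $T$ exists and every ML estimator satisfies $\hat{\theta}_n(x_{1:n})=p^{-1}(m(x_{1:n}))$. Getting the boundary cases and the density/attainment step exactly right is the one place that needs care; everything else is routine. (Using instead a bijection $p$ onto all of $\mathbb{Q}\cap(0,1)$ would make even the \emph{existence} of an ML estimator fail, because of the all-zeros and all-ones samples; I prefer the bounded version since it exhibits an ML estimator that genuinely exists and is inconsistent, which is the phenomenon that motivates distinctive likelihoods.)

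To finish, set $\theta_0\defeq p^{-1}(1/2)$. Under $\boldsymbol{\theta}=\theta_0$ the $\mathbf{x}_i$ are i.i.d.\ Bernoulli$(1/2)$, so by the strong law $s(\mathbf{x}_{1:n})/n\to 1/2$ almost surely; hence with probability one $s(\mathbf{x}_{1:n})/n\in(1/4,3/4)$ for all sufficiently large $n$, and on that event $\hat{\theta}_n(\mathbf{x}_{1:n})=p^{-1}\!\big(s(\mathbf{x}_{1:n})/n\big)$, which equals $\theta_0$ precisely when $s(\mathbf{x}_{1:n})=n/2$. Consequently $\{\hat{\theta}_n(\mathbf{x}_{1:n})=\theta_0\}\subseteq\{s(\mathbf{x}_{1:n})=n/2\}$, so $\prob(\hat{\theta}_n(\mathbf{x}_{1:n})=\theta_0\mid\boldsymbol{\theta}=\theta_0)\le\binom{n}{\lfloor n/2\rfloor}2^{-n}\to 0$. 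As $\{\theta_0\}$ is a neighbourhood of $\theta_0$ in the discrete space $\Theta$, this contradicts consistency at $\theta_0$ in the sense of Definition~\ref{D:cons1}; hence no ML estimator on $T$ is consistent, so the ML estimator class is not consistent (in the sense of Definition~\ref{D:classcons}) over the class of problems with consistent posteriors, i.e.\ it is not properly consistent.
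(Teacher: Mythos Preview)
Your proof is correct and gives a cleaner counterexample than the paper's. Both exhibit a problem with convergent likelihood ratios on which every ML estimator is inconsistent; the paper, however, builds a more elaborate example in which each $\theta\in\Theta\subset(0,1)$ encodes base-$3$ digit information, the first $s(\theta)$ observations are dictated deterministically by those digits, and thereafter the observations are i.i.d.\ Bernoulli$(\theta)$. In that problem, for every $x_{1:n}$ there is always some $\theta'$ with likelihood exactly $1$, so the ML estimate converges to a definite \emph{wrong} limit, rather than merely failing to settle as in your construction. The paper chooses this design precisely to motivate Section~\ref{S:LC}: it isolates the failure mode---the supremum of the likelihood over $U^c$ never drops below that over $U$---and thereby the definition of \emph{distinctive likelihoods}. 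Your binomial example with a scrambling bijection $p\colon\setN\to\mathbb{Q}\cap[1/4,3/4]$ establishes Theorem~\ref{T:MLimp} more economically (and, pleasantly, the ML estimator exists uniquely with an explicit formula, which Lemma~\ref{L:Bin_DL} does not cover since $p(\theta)\ne\theta$); the underlying mechanism, though---$p$ decouples the Euclidean topology of $\Theta=\setN$ from the likelihood landscape---is somewhat different from the one the paper uses to set up the next section.
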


\begin{proof}
For $\theta$ in the range $0<\theta<1$, let $([\theta]^b_i)_{i\in\setN}$ denote the base
$b$ breakdown of $\theta$, as follows:
\[
\theta=\sum_{i=1}^{\infty} [\theta]^b_i b^{-i},
\]
where for all $i$,  $[\theta]^b_i\in\{0,\ldots,b-1\}$. Then let $S(\theta)=\{i\in\setN:[\theta]^3_i=1\}$ and $S_i=\{\theta\in(0,1):S(\theta)=\{i+1,i+2,\ldots\}\}$. Note that the $S_i$ are disjoint so we can define the function $s$ over their union such that $s(\theta)=i$ if and only if $\theta\in S_i$. 

Now, let $T=(\boldsymbol{\theta},\mathbf{x})$ be a large sample estimation problem where $\Theta=\bigcup_{i=1}^\infty S_i$, $\prob(\boldsymbol{\theta}=\theta)=7\cdot 16^{-s(\theta)}$ and for all $i\in\setN$, $X_i  = \{0,1\}$ with the $\mathbf{x}_i$ independent when conditioned on $\boldsymbol{\theta}$, with conditional distributions given by:
\[
\prob(\mathbf{x}_i=1|\boldsymbol{\theta}=\theta)=\begin{cases}
[\theta]^3_i/2 & \text{if $i\le s(\theta)$} \\
\theta & \text{otherwise}.
\end{cases}
\]

We will prove that ML is inconsistent for this problem, even though it has convergent likelihood ratios (and therefore a consistent posterior).
First, to see that ML is inconsistent, note that for any $\theta\in\Theta$, if $i>s(\theta)$  then after $i$ observations the likelihood of $\theta$ will be smaller than $1$. However, if $\mathbf{x}=x$ then for any $\theta'$ value
for which $\forall j\le i$,  $[\theta']^3_j/2=x_j$ will have the larger likelihood
of $1$. Thus, $\theta$ is not the maximum likelihood solution. As $i$ grows
to infinity, the ML estimate will converge to the value $\sum_{i=1}^{\infty} 2x_i 3^{-i}$. This value is clearly
distinct from $\theta$, differing from it by at least $3^{-s(\theta)}/6$.

To see that the problem has convergent likelihood ratios (and therefore a consistent posterior) consider any $\theta'\in\Theta$ such that $\theta'\ne\theta$, and let $A=\max\{s(\theta),s(\theta')\}$. Then:

\[
\frac{P_{\theta'}^{(1:A+n)}(\mathbf{x}_{1:A+n})}{P_\theta^{(1:A+n)}(\mathbf{x}_{1:A+n})}=\frac{P_{\theta'}^{(1:A)}(\mathbf{x}_{1:A})}{P_\theta^{(1:A)}(\mathbf{x}_{1:A})}\cdot\frac{P_{\theta'}^{(A+1:A+n)}(\mathbf{x}_{A+1:A+n})}{P^{(A+1:A+n)}_\theta(\mathbf{x}_{A+1:A+n})}.
\]
However, $P^{(A+1:A+n)}_\theta$ defines the distribution of each
$\mathbf{x}_i$ in $i\in [A+1,A+n]$ given $\theta$ as an independent
Bernoulli distribution, with probability parameter $p(\theta)=\theta$.
By Lemma~\ref{L:Bin_CLR}, this problem has convergent likelihood ratios,
so with probability $1$ given $\boldsymbol{\theta}=\theta$ and
$\theta'\ne\theta$, we have that
\[
\lim_{n\to\infty}\frac{P_{\theta'}^{(A+1:A+n)}(\mathbf{x}_{1:n})}{P_\theta^{(1:n)}(\mathbf{x}_{1:n})}=0,
\]
and therefore also
\[
\lim_{n\to\infty}\frac{P_{\theta'}^{(1:n)}(\mathbf{x}_{1:n})}{P_\theta^{(1:n)}(\mathbf{x}_{1:n})}=\frac{P_{\theta'}^{(1:A)}(\mathbf{x}_{1:A})}{P_\theta^{(1:A)}(\mathbf{x}_{1:A})}\cdot\lim_{n\to\infty}\frac{P_{\theta'}^{(A+1:A+n)}(\mathbf{x}_{1:n})}{P_\theta^{(1:n)}(\mathbf{x}_{1:n})}=0.
\]
\end{proof}

\section{Likelihood Consistency}\label{S:LC}

The issue that the example in Theorem \ref{T:MLimp}  creates for the consistency of  Maximum Likelihood is straightforward, namely that although the likelihood ratios converge to 0 with the true model likelihood in the denominator, they do not do so uniformly. Therefore at no point are all the likelihood ratios less than 1. Requiring the ratios to converge uniformly gives the class of estimation problems with \emph{distinctive likelihoods}, and estimator classes which are consistent over the class of estimation problems with distinctive likelihoods we call \emph{likelihood consistent}. We show that having distinctive likelihoods is sufficient to ensure the consistency of all maximum likelihood estimators. Unfortunately maximum likelihood estimators do not exist for all estimation problems with distinctive likelihoods so the maximum likelihood estimator class is only partially likelihood consistent. We therefore introduce the class of approximate maximum likelihood estimators \cite[See][Chapter 6]{balakrishnan2014order}, which has members for every estimation problem, and show that the class is strongly likelihood consistent. We in fact show that the approximate maximum likelihood estimator class characterises the class of estimation problems with distinctive likelihoods, in the sense that a problem has distinctive likelihoods if and only if all the approximate maximum likelihood estimators on the problem are strongly consistent.

\subsection{Maximum Likelihood}
\begin{defi}\label{D:distlike}
A large sample estimation problem $T=(\boldsymbol{\theta},\mathbf{x})$, has \emph{distinctive likelihoods} if, for all $\theta\in\Theta$ and neighbourhoods of $\theta$, $U$:
\begin{equation}\label{Eq:DLdef}
\prob\left(\limsup_{n\to\infty}\frac{\sup_{\theta'\notin U}{P_{\theta'}^{(1:n)}}(\mathbf{x}_{1:n})}{\sup_{\tilde{\theta}\in U}P_{\tilde{\theta}}^{(1:n)}(\mathbf{x}_{1:n})}<1\middle|\boldsymbol{\theta}=\theta\right)=1.
\end{equation}
\end{defi}

\begin{lemma}\label{L:Bin_DL}
Every binomial probability estimation problem with $p(\theta)=\theta$
has distinctive likelihoods.
\end{lemma}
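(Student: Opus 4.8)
The plan is to mirror the computation in the proof of Lemma~\ref{L:Bin_CLR}, but now tracking suprema over whole parameter sets rather than a single competitor. Write $\rho(n)\defeq k(n)/n$ for the empirical frequency of $1$'s among $\mathbf{x}_{1:n}$, and set $g(t,r)\defeq t^{r}(1-t)^{1-r}$ for $t\in(0,1)$, $r\in[0,1]$, so that (exactly as in \eqref{Eq:BinL}, with $p(\theta)=\theta$) every likelihood factors as $P^{(1:n)}_{\theta'}(\mathbf{x}_{1:n})=g(\theta',\rho(n))^{n}$. Since $s\mapsto s^{n}$ is increasing on $[0,\infty)$, the ratio inside the probability in \eqref{Eq:DLdef} equals $\bigl(\phi_n/\psi_n\bigr)^{n}$, where $\phi_n\defeq\sup_{\theta'\notin U}g(\theta',\rho(n))$ and $\psi_n\defeq\sup_{\tilde\theta\in U}g(\tilde\theta,\rho(n))$. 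I will use the fact, already noted in the proof of Lemma~\ref{L:Bin_CLR}, that for each $r\in(0,1)$ the map $t\mapsto g(t,r)$ is strictly unimodal with its unique maximum at $t=r$, hence increasing on $(0,r)$ and decreasing on $(r,1)$.

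First I would fix $\theta\in\Theta$ and a neighbourhood $U$ of $\theta$, pick $\delta>0$ small enough that $(\theta-\delta,\theta+\delta)\subseteq(0,1)$ and $(\theta-\delta,\theta+\delta)\cap\Theta\subseteq U$ (so that $\Theta\setminus U\subseteq\{t:|t-\theta|\ge\delta\}$), and condition on $\boldsymbol{\theta}=\theta$. By the strong law of large numbers $\rho(n)\to\theta$ almost surely, so it suffices to show the $\limsup$ in \eqref{Eq:DLdef} is strictly below $1$ on the probability-$1$ event $\{\rho(n)\to\theta\}$; I would fix a sample path in that event and argue deterministically. For the denominator, $\theta\in U$ gives $\psi_n\ge g(\theta,\rho(n))\to g(\theta,\theta)=\theta^{\theta}(1-\theta)^{1-\theta}>0$. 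For the numerator, once $n$ is large enough that $|\rho(n)-\theta|<\delta$, unimodality of $g(\cdot,\rho(n))$ forces its supremum over $\{t\in(0,1):|t-\theta|\ge\delta\}$ to be attained at one of the two points $\theta-\delta$, $\theta+\delta$, so $\phi_n\le\max\{g(\theta-\delta,\rho(n)),g(\theta+\delta,\rho(n))\}\to\max\{g(\theta-\delta,\theta),g(\theta+\delta,\theta)\}=:M$, and $M<g(\theta,\theta)$ by strict unimodality of $g(\cdot,\theta)$.

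I would then combine the two bounds: $\limsup_{n}\phi_n/\psi_n\le M/g(\theta,\theta)=:c<1$, so on this path there is an $N$ with $\phi_n/\psi_n\le(1+c)/2<1$ for all $n\ge N$; hence $\bigl(\phi_n/\psi_n\bigr)^{n}\to 0$ and in particular its $\limsup$ is $0<1$, which is \eqref{Eq:DLdef}. The step needing the most care is the numerator bound --- one must rule out that the suprema of $g(\cdot,\rho(n))$ over parameters far from $\theta$ are comparable to $g(\theta,\theta)$, and this is precisely where strict unimodality of $g(\cdot,\theta)$ --- the same feature that powered Lemma~\ref{L:Bin_CLR} --- is essential, together with the elementary observation that raising a base whose $\limsup$ is below $1$ to the $n$-th power sends the sequence to $0$. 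The degenerate case $\Theta\setminus U=\emptyset$ only makes $\phi_n=0$ and is covered by the convention $\sup\emptyset=0$.
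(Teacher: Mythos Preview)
Your proof is correct and follows essentially the same approach as the paper: express the likelihood as $g(\theta',\rho(n))^n$, use the strong law of large numbers to bring $\rho(n)$ near $\theta$, bound the numerator by the values of $g$ at the endpoints of an interval about $\theta$ via unimodality, and lower-bound the denominator by the value at $\theta$ itself. The only cosmetic differences are that the paper takes $U=(\theta_1,\theta_2)$ directly and finishes by invoking Lemma~\ref{L:Bin_CLR} on the two endpoint ratios, whereas you shrink to a symmetric $\delta$-interval and argue directly that the base ratio has $\limsup<1$ so its $n$-th power tends to $0$; your bookkeeping of the denominator (via $\psi_n\ge g(\theta,\rho(n))$) is in fact slightly cleaner than the paper's intermediate step through $P_{\rho(n)}$.
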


\begin{proof}
Let $\rho(n)$ be as in Lemma~\ref{L:Bin_CLR}, and consider that
\[
P^{(1:n)}_{\theta'}(\mathbf{x}_{1:n})=\left(\theta'^{\rho(n)}(1-\theta')^{1-\rho(n)}\right)^n.
\]

Taken as a function of $\theta'$, this has a unique turning point: a maximum at
$\theta'=\rho(n)$. Recall, however, that by the strong law of large numbers
with probability $1$ the value of $\rho(n)$ converges to $\theta$, when
$\boldsymbol{\theta}=\theta$. Therefore, with probability $1$, it will
eventually be in any neighbourhood $U$ of $\theta$.
If, without loss of generality, we take $U$ to
be the interval $(\theta_1,\theta_2)$, then when $\rho(n)$ is in $U$,
\begin{align*}
\limsup_{n\to\infty} \frac{\sup_{\theta'\notin U} P^{(1:n)}_{\theta'}(\mathbf{x}_{1:n})}{\sup_{\tilde{\theta}\in U} P^{(1:n)}_{\tilde{\theta}}(\mathbf{x}_{1:n})}
&\le\limsup_{n\to\infty}\max\left(\frac{P^{(1:n)}_{\theta_1}(\mathbf{x}_{1:n})}{P^{(1:n)}_{\rho(n)}(\mathbf{x}_{1:n})},\frac{P^{(1:n)}_{\theta_2}(\mathbf{x}_{1:n})}{P^{(1:n)}_{\rho(n)}(\mathbf{x}_{1:n})}\right) \\
&\le\limsup_{n\to\infty}\max\left(\frac{P^{(1:n)}_{\theta_1}(\mathbf{x}_{1:n})}{P^{(1:n)}_{\theta}(\mathbf{x}_{1:n})},\frac{P^{(1:n)}_{\theta_2}(\mathbf{x}_{1:n})}{P^{(1:n)}_{\theta}(\mathbf{x}_{1:n})}\right),
\end{align*}
which we know from Lemma~\ref{L:Bin_CLR} to converge to $0$, and therefore to
eventually be less than $1$.
\end{proof}

\begin{thm}\label{T:MLlikecons}
Maximum Likelihood is strongly partially consistent over the class of problems with distinctive likelihoods.
\end{thm}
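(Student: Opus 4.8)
The plan is to prove the stronger pointwise fact that \emph{every} Maximum Likelihood estimator on a problem with distinctive likelihoods is strongly consistent at every $\theta\in\Theta$; by Definition~\ref{D:partcons} this is exactly strong partial consistency, and problems carrying no ML estimator impose no requirement. I would begin by fixing such a problem $T=(\boldsymbol\theta,\mathbf x)$, an ML estimator $\hat\theta=(\hat\theta_n)_{n\in\setN}$ on it, and a value $\theta\in\Theta$, and then condition on $\boldsymbol\theta=\theta$. Recall that ``ML estimator on a large-sample problem'' only requires \eqref{eq:ML} to hold for all sufficiently large $n$; since consistency is asymptotic, that is all I would use.

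The heart of the argument is a deterministic domination. For all large $n$, $\hat\theta_n(x_{1:n})$ attains $\max_{\vartheta\in\Theta}P^{(1:n)}_\vartheta(x_{1:n})$, so for any neighbourhood $U$ of $\theta$,
\[
P^{(1:n)}_{\hat\theta_n(x_{1:n})}(x_{1:n})\ \ge\ \sup_{\tilde\theta\in U}P^{(1:n)}_{\tilde\theta}(x_{1:n}).
\]
Hence, as soon as $\sup_{\theta'\notin U}P^{(1:n)}_{\theta'}(x_{1:n})<\sup_{\tilde\theta\in U}P^{(1:n)}_{\tilde\theta}(x_{1:n})$, the maximiser cannot lie in $U^c$ (otherwise the left-hand supremum would bound it from above, contradicting the strict inequality), i.e.\ $\hat\theta_n(x_{1:n})\in U$. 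I would then invoke Definition~\ref{D:distlike}: conditioned on $\boldsymbol\theta=\theta$, with probability $1$ the prefix $\mathbf x_{1:n}$ lies in the support of $P^{(1:n)}_\theta$ for every $n$, so the denominator $\sup_{\tilde\theta\in U}P^{(1:n)}_{\tilde\theta}(\mathbf x_{1:n})\ge P^{(1:n)}_\theta(\mathbf x_{1:n})$ is positive and the ratio in \eqref{Eq:DLdef} is well defined; distinctive likelihoods then give that this ratio is, almost surely, eventually strictly below $1$. Combining the two observations, for each fixed neighbourhood $U$ of $\theta$,
\[
\prob(\randHat_n\in U\text{ for all large }n \mid \boldsymbol\theta=\theta)=1.
\]

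To conclude, I would upgrade this per-neighbourhood statement to almost-sure convergence $\randHat_n\to\theta$. This is the one place the standing hypothesis that $\Theta$ is first countable (Section~\ref{Sec:Def}) is used: picking a countable neighbourhood basis $(U_k)_{k\in\setN}$ at $\theta$, each event $\{\randHat_n\in U_k\text{ eventually}\}$ has conditional probability $1$, hence so does their intersection, and on that intersection every neighbourhood of $\theta$ eventually contains $\randHat_n$ --- which is the definition of convergence to $\theta$. As $\theta$ was arbitrary, $\hat\theta$ is strongly consistent on $T$.

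I do not expect a genuine obstacle; the inference ``distinctive likelihoods $\Rightarrow$ the ML estimate is eventually inside $U$'' is immediate once the domination is noted. The points requiring care --- and the ones I would be careful to write out --- are that the denominator in \eqref{Eq:DLdef} is almost surely positive (so the defining ratio makes sense), that the $\argmax$ in \eqref{eq:ML} is genuinely attained for large $n$ so that the maximiser equals $\randHat_n$, and that the passage from a family of almost-sure events indexed by neighbourhoods to a single almost-sure event uses only a countable neighbourhood basis.
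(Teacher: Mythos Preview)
Your proposal is correct and follows essentially the same approach as the paper: use distinctive likelihoods to show that for each neighbourhood $U$ the ratio in \eqref{Eq:DLdef} is eventually below $1$ almost surely, whence the $\argmax$ cannot lie in $U^c$, so $\randHat_n\in U$ eventually. The paper's proof is terser and simply asserts that being eventually in every neighbourhood of $\theta$ amounts to convergence; your explicit invocation of a countable neighbourhood basis to pass from per-$U$ almost-sure statements to a single almost-sure convergence event is a worthwhile clarification (and mirrors what the paper does later in Lemma~\ref{L:consaML}).
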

 \begin{proof}
 Suppose $T=(\boldsymbol{\theta},\mathbf{x})$ is a large sample estimation problem with distinctive likelihoods. Then, by definition, for any $\theta\in\Theta$, for $P_\theta$-almost every $x\in X$ there is some $N\in\setN$ dependent on $x$ such that for all $n>N$, $\frac{\sup_{\theta'\notin U}P_{\theta'}^{(1:n)}(x_{1:n})}{\sup_{\tilde{\theta}\in U}P_{\tilde{\theta}}^{(1:n)}(x_{1:n})}<1$. So for all $n>N$ and all $\theta'\notin U$, 
 \begin{equation}\label{eq:suplike}
 sup_{\tilde{\theta}\in\Theta}P_{\tilde{\theta}}^{(1:n)}(x_{1:n})>P^{(1:n)}_{\theta'}(x_{1:n})
 \end{equation}

Now suppose $\hat{\theta}_{\text{ML}}$ is a maximum likelihood estimator on $T$. Then \eqref{eq:suplike} implies that for any $\theta\in\Theta$ and neighbourhood, $U$, of $\theta$, there exists $N\in\setN$ such that for all $n>N$, $\argmax_{\tilde{\theta}\in\Theta}P_{\tilde{\theta}}^{(1:n)}(x_{1:n})\notin U^c$. Thus either $\hat{\theta}_{\text{ML}}$ either does not exist, or is in every neighbourhood of $\theta$ eventually. In other words, any maximum likelihood estimator on $T$ is strongly consistent.
 \end{proof}
 
The condition of being `partial' in the above result is necessary, because the likelihood function may fail to attain its supremum as we now show:

\begin{lemma}\label{L:MLnexist}
There exists a large sample estimation problem $T$ with distinctive likelihoods such that for all $n\in\setN$, and $x\in X$, $\argmax_{\theta\in\Theta}P_\theta^{(1:n)}(x_{1:n})=\emptyset$. Thus in particular there exists an estimation problem with distinctive likelihoods for which no maximum likelihood estimator exists.
\end{lemma}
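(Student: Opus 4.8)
The plan is to decouple the two requirements: ``distinctive likelihoods'' we get for free by making the problem a binomial probability estimation problem with $p(\theta)=\theta$ (Lemma~\ref{L:Bin_DL}), while ``no ML estimator'' we engineer by choosing the parameter space $\Theta$ to systematically \emph{miss} the empirical frequencies at which the likelihood peaks. Concretely, I would take $T$ to be the binomial probability estimation problem in which $\Theta$ is a countable dense subset of $(0,1)$ consisting entirely of irrationals — for instance $\Theta=\{q+\sqrt2:q\in\mathbb{Q}\}\cap(0,1)$, which is countable, dense in $(0,1)$, and contains no rational number (otherwise $\sqrt2$ would be rational) — equipped with any everywhere-positive prior (enumerate $\Theta=\{\theta_1,\theta_2,\dots\}$ and set $\Pi(\{\theta_j\})=2^{-j}$), with $X_i=\{0,1\}$ and each $\mathbf{x}_i$ conditionally independent Bernoulli with parameter $p(\theta)=\theta$. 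Since this is a binomial probability estimation problem with $p(\theta)=\theta$, Lemma~\ref{L:Bin_DL} immediately yields that $T$ has distinctive likelihoods, so that half of the statement needs no further argument.

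The substance is to show $\argmax_{\theta\in\Theta}P_\theta^{(1:n)}(x_{1:n})=\emptyset$ for every $n$ and every $x\in X$. Fixing $n$ and $x$ and writing $k$ for the number of ones among $x_1,\dots,x_n$, the likelihood is $L(\theta)\defeq\theta^{k}(1-\theta)^{n-k}$, a continuous function on $[0,1]$ whose maximum over $[0,1]$ is attained at a unique point $\theta^\star$ — namely $\theta^\star=k/n$ when $0<k<n$, $\theta^\star=0$ when $k=0$, and $\theta^\star=1$ when $k=n$ — with $L(\theta)<L(\theta^\star)$ strictly for every other $\theta\in[0,1]$ (an elementary calculus check). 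In every case $\theta^\star$ is $0$, $1$, or a rational in $(0,1)$, hence $\theta^\star\notin\Theta$. Because $\Theta$ is dense in $(0,1)$ its closure contains $[0,1]$, so some sequence $\theta_j\in\Theta$ converges to $\theta^\star$; continuity gives $L(\theta_j)\to L(\theta^\star)=\sup_{t\in[0,1]}L(t)\ge\sup_{\theta\in\Theta}L(\theta)$, so $\sup_{\theta\in\Theta}L(\theta)=L(\theta^\star)$. Since every $\theta\in\Theta$ differs from $\theta^\star$ and therefore satisfies $L(\theta)<L(\theta^\star)$, this supremum is not attained, i.e.\ the argmax set is empty. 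Finally, an ML estimator for the large sample problem is a sequence $(\hat\theta_n)_{n\in\setN}$ with each $\hat\theta_n$ an ML estimator for $T_n=(\boldsymbol\theta,\mathbf{x}_{1:n})$, and Definition~\ref{D:ML} forces $\hat\theta_n(x_{1:n})\in\argmax_{\theta\in\Theta}P_\theta^{(1:n)}(x_{1:n})$; as that set is empty for every $n$ and every $x$, no such $\hat\theta_n$, and hence no ML estimator for $T$, exists.

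The one genuinely delicate point, I expect, is the boundary cases $k=0$ and $k=n$: handling them requires that $\Theta$ be dense all the way up to the endpoints of $(0,1)$ — so that $\inf\Theta=0$ and $\sup\Theta=1$ are both unattained — and not merely that $\Theta$ avoid the rationals $k/n$ for $0<k<n$. (If $\Theta$ had, say, a smallest element, then for $k=0$ the likelihood $(1-\theta)^n$ would attain its $\Theta$-supremum there and an ML estimator would exist for $n$ with all observations equal to $0$.) Everything else is routine, and I would also note in passing that one should check $T$ genuinely satisfies Definition~\ref{D:bayesProblem} (the $P_\theta$ are distinct because $p$ is one-to-one, $\Theta$ and the $X_i$ are countable subsets of $\mathbb{R}$, and $\Pi$ is everywhere positive), which is straightforward.
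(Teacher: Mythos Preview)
Your proposal is correct and is essentially the same construction as the paper's: a binomial probability estimation problem with $p(\theta)=\theta$ over a countable dense set of irrationals in $(0,1)$ (the paper uses $\{\theta\in(0,1):\theta-\pi\in\mathbb{Q}\}$ in place of your $\sqrt{2}$-shifted rationals), invoking Lemma~\ref{L:Bin_DL} for distinctive likelihoods and the rationality of the likelihood maximiser for non-attainment. If anything, your treatment of the boundary cases $k=0$ and $k=n$ is more explicit than the paper's, which glosses over them.
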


\begin{proof}
 Let $T=(\boldsymbol{\theta},\mathbf{x})$ be a large sample estimation problem where $\Theta=\{\theta\in (0,1):\theta-\pi\in\mathbb{Q}\}$ and $\mathbf{x}_{1:n}$ is a Binomial random variable with probability parameter equal to $\boldsymbol{\theta}$.  Let $(k_n)_{n\in\setN}$ be a sequence of functions, with $k_n$ taking a sequence of $n$ observations and returning the number of ones observed. Thus $\mathbf{k}_n\defeq k_n(\mathbf{x}_{1:n})$ is a random variable representing the number of ones in the first $n$ observations (a sufficient statistic for this problem). The likelihood function given $\mathbf{k}_n=k$, is the restriction of the function $L(\eta;k,n)=\binom{n}{k}\eta^k (1-\eta)^{n-k}$ to $\Theta$. $L$ is differentiable in $\theta$ and has a single turning point at $\theta=\frac{k}{n}$ (a rational value) which is the unique global maximum. Hence, since $\Theta$ is dense in $[0,1]$ but contains no rational numbers, the likelihood function does not obtain its supremum over $\Theta$, and so, in particular, no maximum likelihood estimator exists for this problem.

From Lemma~\ref{L:Bin_DL}, however, $T$ is known to have distinctive
likelihoods.
\end{proof}
 
 \subsection{Approximate Maximum Likelihood}
Lemma \ref{L:MLnexist} provides our motivation for considering the class of \emph{approximate} Maximum Likelihood estimators:
 
 \begin{defi}\label{D:approxML}
 $(\hat{\theta}_n)_{n\in\setN}$ is an \emph{approximate maximum likelihood estimator} on estimation problem $(\boldsymbol{\theta},\mathbf{x})$ if:
\begin{equation*}
 \prob\left(\liminf_{n\to\infty} \frac{P_{\hat{\theta}_n}^{(1:n)}(\mathbf{x}_{1:n})}{\sup_{\theta'\in\Theta}P_{\theta'}^{(1:n)}(\mathbf{x}_{1:n})}=1\right)=1
 \end{equation*}
\end{defi}

 \begin{lemma}\label{L:existaML}
 For any large sample estimation problem $T$ there exists an approximate maximum likelihood estimator on $T$.
 \end{lemma}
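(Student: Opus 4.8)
The plan is to build the estimator by brute force: at each sample size $n$ we pick any value of $\theta$ whose likelihood is within a factor $1-1/n$ of the supremal likelihood, and we let this tolerance shrink to zero as $n\to\infty$. Concretely, for each $n\in\setN$ and each $x_{1:n}$ with $\prob(\mathbf{x}_{1:n}=x_{1:n})>0$, I would set $\hat{\theta}_n(x_{1:n})$ to be the first element, in some fixed enumeration of the countable set $\Theta$, satisfying
\[
P_{\hat{\theta}_n(x_{1:n})}^{(1:n)}(x_{1:n})\ge\left(1-\tfrac1n\right)\sup_{\theta'\in\Theta}P_{\theta'}^{(1:n)}(x_{1:n});
\]
for the remaining $x_{1:n}$ I would let $\hat{\theta}_n$ take an arbitrary fixed value. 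Such an element exists whenever $\prob(\mathbf{x}_{1:n}=x_{1:n})>0$, because then $\prob(\mathbf{x}_{1:n}=x_{1:n})=\sum_{\theta}\Pi(\{\theta\})P_\theta^{(1:n)}(x_{1:n})>0$ forces $P_\theta^{(1:n)}(x_{1:n})>0$ for at least one $\theta$ (recall $\Pi$ is everywhere positive), so the supremum in the denominator is a strictly positive real, necessarily at most $1$. Since all subsets of $\Theta$ and of the $X_i$ are measurable, each $\hat{\theta}_n$ is automatically a legitimate estimator, and no form of choice beyond ``the first element that works'' is needed.

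It then remains to check the defining property of Definition~\ref{D:approxML}. For every $x\in X$ such that $x_{1:n}$ lies in the support of $\mathbf{x}_{1:n}$ for all $n$, the ratio $P_{\hat{\theta}_n(x_{1:n})}^{(1:n)}(x_{1:n})/\sup_{\theta'\in\Theta}P_{\theta'}^{(1:n)}(x_{1:n})$ is sandwiched between $1-1/n$ and $1$, so its $\liminf$ over $n$ equals $1$. Hence the event in Definition~\ref{D:approxML} contains the event $\{\,\forall n:\ \mathbf{x}_{1:n}\in\supp(\mathbf{x}_{1:n})\,\}$, and the proof is finished once we note that this latter event has probability $1$: for each fixed $n$ the complement of $\supp(\mathbf{x}_{1:n})$ is a countable union of singletons of probability $0$, so $\prob(\mathbf{x}_{1:n}\notin\supp(\mathbf{x}_{1:n}))=0$, and a countable intersection of probability-$1$ events has probability $1$. (Since the quantities involved depend on $\mathbf{x}$ alone, the unconditional probability suffices; alternatively one can run the same argument conditioned on $\boldsymbol{\theta}=\theta$ and integrate over $\theta$.)

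I do not anticipate a genuine obstacle here, as the construction is essentially dictated by the definition. The only two points requiring a word of care are (i) that on the support the denominator $\sup_{\theta'\in\Theta}P_{\theta'}^{(1:n)}(x_{1:n})$ is a strictly positive real, so that a $(1-1/n)$-approximate maximiser genuinely exists, and (ii) the routine observation that discarding the null set of observation prefixes lying outside the supports of the marginals $\mathbf{x}_{1:n}$ costs nothing. Consequently I expect the written proof to be very short.
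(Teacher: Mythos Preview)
Your proposal is correct and follows essentially the same approach as the paper: at stage $n$ choose any $\theta$ whose likelihood is within a multiplicative factor $1-\tfrac{1}{n}$ of the supremum, then sandwich the ratio between $1-\tfrac{1}{n}$ and $1$. If anything, you are slightly more careful than the paper's proof---you explicitly handle the null set where the supremal likelihood vanishes and you avoid any appeal to choice by using a fixed enumeration of the countable $\Theta$, whereas the paper simply asserts the bound ``for all $x\in X$'' and selects $\theta^*(x_{1:n},\epsilon)$ without specifying how.
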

 
 \begin{proof}
For any large sample estimation problem $(\boldsymbol{\theta},\mathbf{x})$, $x\in X$, $n\in\setN$, and $\epsilon>0$ there exists a $\theta^*(x_{1:n},\epsilon)\in\Theta$ such that $\eucNorm{\sup_{\theta'\in\Theta}P_\theta^{(1:n)}(x_{1:n})-P_{\theta^*(x_{1:n},\epsilon)}^{(1:n)}(x_{1:n})}<\epsilon$ by the definition of the supremum. Let $\hat{\theta}(x_{1:n})=\theta^*(x_{1:n},\frac{1}{n}\sup_{\theta'\in\Theta}P_{\theta'}^{(1:n)}(x_{1:n}))$. Then for all $x\in X$:
\begin{eqnarray*}
1\ge\liminf_{n\to\infty}\frac{P_{\hat{\theta}_n(x_{1:n})}^{(1:n)}(x_{1:n})}{\sup_{\theta'\in\Theta}P_{\theta'}^{(1:n)}(x_{1:n})}&\geq& \liminf_{n\to\infty}\frac{n-1}{n}\\
&=& 1.
\end{eqnarray*}

A fortiori 
\begin{equation*}
\prob\left(\liminf_{n\to\infty} \frac{P_{\hat{\theta}_n}^{(1:n)}(\mathbf{x}_{1:n})}{\sup_{\theta'\in\Theta}P_\theta^{(1:n)}(\mathbf{x}_{1:n})}=1\right)=1
\end{equation*}
so $(\hat{\theta}_n)_{n\in\setN}$ is an approximate maximum likelihood estimator.
\end{proof}
 
 \begin{lemma}\label{L:consaML}
 The class of approximate maximum likelihood estimators is strongly likelihood consistent.
 \end{lemma}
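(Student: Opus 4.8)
The plan is to verify directly the two requirements of Definition~\ref{D:classcons} for the class $\mathscr{E}$ of approximate maximum likelihood estimators over the class of estimation problems with distinctive likelihoods. The existence clause — that every such problem has at least one estimator in $\mathscr{E}$ — is already provided by Lemma~\ref{L:existaML}, so all the work lies in showing that an \emph{arbitrary} approximate maximum likelihood estimator $(\hat{\theta}_n)_{n\in\setN}$ on a problem $T=(\boldsymbol{\theta},\mathbf{x})$ with distinctive likelihoods is strongly consistent; i.e.\ that for each fixed $\theta\in\Theta$, $\hat{\theta}_n(\mathbf{x}_{1:n})\to\theta$ almost surely when $\mathbf{x}\sim P_\theta$.

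First I would fix $\theta$ and condition on $\boldsymbol{\theta}=\theta$ throughout, and set up the two almost-sure facts I intend to combine. The defining inequality of $(\hat{\theta}_n)$ (Definition~\ref{D:approxML}) is stated unconditionally, but since the prior is everywhere positive we have $\prob(\boldsymbol{\theta}=\theta)>0$, so any event of unconditional probability $1$ still has conditional probability $1$ given $\boldsymbol{\theta}=\theta$. Moreover each $P^{(1:n)}_{\theta'}(x_{1:n})$ is a probability and hence lies in $[0,1]$, so $\sup_{\theta'\in\Theta}P^{(1:n)}_{\theta'}(x_{1:n})$ is finite, and it is positive for $P_\theta$-almost every $x$ (since then $P^{(1:n)}_\theta(x_{1:n})>0$), which makes the ratios below well defined almost surely. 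Finally, because the ratio $P^{(1:n)}_{\hat{\theta}_n}(x_{1:n})/\sup_{\theta'\in\Theta}P^{(1:n)}_{\theta'}(x_{1:n})$ never exceeds $1$, the $\liminf$ in Definition~\ref{D:approxML} being equal to $1$ means this ratio in fact converges to $1$.

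Next, I would fix a neighbourhood $U$ of $\theta$ and combine. Since $\sup_{\tilde{\theta}\in U}P^{(1:n)}_{\tilde{\theta}}(x_{1:n})\le\sup_{\theta'\in\Theta}P^{(1:n)}_{\theta'}(x_{1:n})$, the distinctive-likelihoods hypothesis \eqref{Eq:DLdef} yields
\[
\limsup_{n\to\infty}\frac{\sup_{\theta'\notin U}P^{(1:n)}_{\theta'}(\mathbf{x}_{1:n})}{\sup_{\theta'\in\Theta}P^{(1:n)}_{\theta'}(\mathbf{x}_{1:n})}<1\qquad\text{a.s.}
\]
Hence for $P_\theta$-almost every $x$ there exist $\delta=\delta(x)>0$ and $N_1=N_1(x)$ with $\sup_{\theta'\notin U}P^{(1:n)}_{\theta'}(x_{1:n})<(1-\delta)\sup_{\theta'\in\Theta}P^{(1:n)}_{\theta'}(x_{1:n})$ for all $n>N_1$; and, by the previous paragraph, $N_2=N_2(x)$ with $P^{(1:n)}_{\hat{\theta}_n}(x_{1:n})>(1-\delta)\sup_{\theta'\in\Theta}P^{(1:n)}_{\theta'}(x_{1:n})$ for all $n>N_2$. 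For $n>\max(N_1,N_2)$ this gives $P^{(1:n)}_{\hat{\theta}_n}(x_{1:n})>\sup_{\theta'\notin U}P^{(1:n)}_{\theta'}(x_{1:n})$, which forces $\hat{\theta}_n(x_{1:n})\in U$ (were it outside $U$ it would be one of the $\theta'\notin U$ and its likelihood would be bounded by that supremum). So off a $P_\theta$-null set $E_U$, $\hat{\theta}_n(x_{1:n})$ lies in $U$ eventually.

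Finally, since $\Theta$ lies in a first-countable $T_1$ space, I would take a countable neighbourhood basis $U_1\supseteq U_2\supseteq\cdots$ at $\theta$, put $E=\bigcup_k E_{U_k}$ (still $P_\theta$-null), and note that for $x\notin E$ and every $k$, $\hat{\theta}_n(x_{1:n})$ is eventually in $U_k$; as every neighbourhood of $\theta$ contains some $U_k$, this is precisely $\hat{\theta}_n(x_{1:n})\to\theta$. Thus $(\hat{\theta}_n)$ is strongly consistent at every $\theta\in\Theta$, hence strongly consistent on $T$; combined with Lemma~\ref{L:existaML} this is the claimed strong likelihood consistency. The only mildly delicate step is the one handled in the third paragraph — making the two almost-sure statements cooperate for a common $x$ with a common $\delta$, then aggregating over the countably many basic neighbourhoods — but this is bookkeeping rather than a genuine obstacle.
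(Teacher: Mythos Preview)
Your proposal is correct and follows essentially the same approach as the paper's proof: combine the distinctive-likelihoods bound with the approximate-ML ratio to force $\hat{\theta}_n\in U$ eventually, then intersect over a countable neighbourhood basis. The only cosmetic difference is that the paper normalises both ratios by $\sup_{\theta'\in U}P^{(1:n)}_{\theta'}$ (using that this eventually equals $\sup_{\theta'\in\Theta}P^{(1:n)}_{\theta'}$), whereas you normalise both by $\sup_{\theta'\in\Theta}P^{(1:n)}_{\theta'}$; your explicit $\delta$ and handling of the unconditional-to-conditional passage are, if anything, slightly tidier.
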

\begin{proof}
  Suppose $T=(\boldsymbol{\theta},\mathbf{x})$ is a large sample estimation problem with distinctive likelihoods and that $(\hat{\theta}_n)_{n\in\setN}$ is an approximate maximum likelihood estimator on $T$. Then, for each $\theta\in\Theta$, let $\mathscr{B}(\theta)$ be a countable neighbourhood basis of $\theta$ (which must exist since $\Theta$ is first countable).
  
  For a given $\theta^*\in\Theta$ and $U\in\mathscr{B}(\theta^*)$, because $T$ has distinctive likelihoods, 
  \begin{equation*}
  	\prob\left(\limsup_{n\to\infty}\frac{\sup_{\theta'\notin U}{P_{\theta'}^{(1:n)}}(\mathbf{x}_{1:n})}{\sup_{\tilde{\theta}\in U}P_{\tilde{\theta}}^{(1:n)}(\mathbf{x}_{1:n})}<1\middle|\boldsymbol{\theta}=\theta^*\right)=1,
  \end{equation*} 
and, by definition, if $\hat{\theta}$ is an approximate maximum likelihood estimator on $T$, then 
\begin{equation*}
\prob\left(\liminf_{n\to\infty} \frac{P_{\hat{\theta}_n(\mathbf{x}_{1:n})}^{(1:n)}(\mathbf{x}_{1:n})}{\sup_{\theta'\in\Theta}P_{\theta'}^{(1:n)}(\mathbf{x}_{1:n})}=1\right)=1.
\end{equation*}

Combining the above two equations we derive that with probability 1, when $\mathbf{x}$ is distributed according $P_\theta$:

\begin{equation}\label{eq:neweq}
\begin{aligned}
	\liminf_{n\to\infty} \frac{P_{\hat{\theta}_n(\mathbf{x}_{1:n})}^{(1:n)}(\mathbf{x}_{1:n})}{\sup_{\theta'\in\Theta}P_{\theta'}^{(1:n)}(\mathbf{x}_{1:n})}
&=\liminf_{n\to\infty} \frac{P_{\hat{\theta}_n(\mathbf{x}_{1:n})}^{(1:n)}(\mathbf{x}_{1:n})}{\sup_{\theta'\in U}P_{\theta'}^{(1:n)}(\mathbf{x}_{1:n})}\\
	&> \limsup_{n\to\infty}\frac{\sup_{\theta'\notin U}{P_{\theta'}^{(1:n)}}(\mathbf{x}_{1:n})}{\sup_{\tilde{\theta}\in U}P_{\tilde{\theta}}^{(1:n)}(\mathbf{x}_{1:n})}.
\end{aligned}
\end{equation}
Inequality \eqref{eq:neweq} implies that $\hat{\theta}_n$ is eventually in $U$ with probability 1 when $\mathbf{x}$ is distributed according to $P_\theta$. Therefore for all $\theta\in\Theta$ and $U$ in $\mathscr{B}(\theta)$ 
\begin{equation*}
\prob\left(\liminf_{n\to\infty} \mathbb{I}_{\{\hat{\theta}_n(\mathbf{x}_{1:n})\in U\}}\middle|\boldsymbol{\theta}=\theta\right)=1.
\end{equation*} 
Because $\mathscr{B}(\theta)$ is a countable neighbourhood basis, for all $\theta\in\Theta$, we conclude that
\begin{equation*}
\prob\left(\hat{\theta}_n(\mathbf{x}_{1:n})\rightarrow \boldsymbol{\theta}\middle|\boldsymbol{\theta}=\theta\right)=\prob\left(\bigcap_{U\in\mathscr{B}(\theta)}\liminf_{n\to\infty} \mathbb{I}_{\{\hat{\theta}_n(\mathbf{x}_{1:n})\in U\}}\middle|\boldsymbol{\theta}=\theta\right)=1.
\end{equation*}
That is $\hat{\theta}$ is strongly consistent.
\end{proof}
	
  \begin{cor}\label{C:distimpconv}
 Every estimation problem with distinctive likelihoods has a consistent posterior.
 \end{cor}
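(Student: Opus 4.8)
The plan is to deduce this corollary directly from the machinery already built up, rather than to argue about posteriors from scratch. The key observation is that ``distinctive likelihoods'' has already been tied, via Lemma~\ref{L:consaML}, to the strong consistency of a concrete estimator class, and that Theorem~\ref{T:PropCons} converts the existence of a strongly consistent estimator into the consistent-posterior property. So the proof is really just a chain of three citations.

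First I would invoke Lemma~\ref{L:existaML}: given an arbitrary large sample estimation problem $T$ with distinctive likelihoods, there exists at least one approximate maximum likelihood estimator $(\hat{\theta}_n)_{n\in\setN}$ on $T$. Second, since $T$ has distinctive likelihoods, Lemma~\ref{L:consaML} (the class of approximate maximum likelihood estimators is strongly likelihood consistent) tells us that this particular estimator $(\hat{\theta}_n)_{n\in\setN}$ is strongly consistent on $T$. Third, I would apply the implication $3\implies 1$ of Theorem~\ref{T:PropCons}: the existence of a strongly consistent estimator for $T$ implies that $T$ has a consistent posterior. Chaining these gives the claim.

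I do not anticipate any real obstacle here: every ingredient is already in place, and no new estimate or limiting argument is required. The only thing to be slightly careful about is that Lemma~\ref{L:consaML} is phrased as ``strongly likelihood consistent'' (i.e.\ \emph{every} approximate maximum likelihood estimator on a distinctive-likelihoods problem is strongly consistent), so one must first secure the \emph{existence} of such an estimator — which is exactly what Lemma~\ref{L:existaML} provides — before concluding. Equivalently, one could phrase it as: the approximate maximum likelihood estimator class is strongly consistent (not merely strongly partially consistent) over the class of problems with distinctive likelihoods, and a strongly consistent estimator class is in particular non-empty on each such problem, so Theorem~\ref{T:PropCons} applies. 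Either phrasing yields a two-line proof.
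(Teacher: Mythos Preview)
Your proposal is correct and essentially identical to the paper's own proof: the paper also chains Lemma~\ref{L:existaML} and Lemma~\ref{L:consaML} to produce a (strongly) consistent estimator, and then concludes via the equivalence with posterior consistency. The only cosmetic difference is that the paper cites Lemma~\ref{L:impcons} directly rather than the packaged implication $3\Rightarrow 1$ of Theorem~\ref{T:PropCons}, which amounts to the same thing.
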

 \begin{proof}
 By Lemmas \ref{L:existaML} and  \ref{L:consaML} every problem with distinctive likelihoods has a consistent estimator, and by Lemma \ref{L:impcons} every estimation problem with a consistent estimator has a consistent posterior.
 \end{proof}

 \begin{thm}\label{T:approxML}
 Large sample estimation problem $T$ has distinctive likelihoods if and only if all approximate maximum likelihood estimators on $T$ are strongly consistent.
 \end{thm}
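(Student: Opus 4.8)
The plan is to treat the two implications separately. The ``if'' direction---that distinctive likelihoods forces every approximate maximum likelihood estimator on $T$ to be strongly consistent---needs no new work: it is precisely Lemma~\ref{L:consaML} (with Lemma~\ref{L:existaML} supplying existence of such estimators), specialised to the single problem $T$. So the content of the theorem is the ``only if'' direction, which I would prove by contraposition: assuming $T$ does \emph{not} have distinctive likelihoods, I will construct an approximate maximum likelihood estimator on $T$ that fails to be strongly consistent.

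Negating Definition~\ref{D:distlike} yields a point $\theta^*\in\Theta$ and a neighbourhood $U$ of $\theta^*$ for which the event
\[
E\defeq\left\{\limsup_{n\to\infty}\frac{\sup_{\theta'\notin U}P_{\theta'}^{(1:n)}(\mathbf{x}_{1:n})}{\sup_{\tilde{\theta}\in U}P_{\tilde{\theta}}^{(1:n)}(\mathbf{x}_{1:n})}\ge 1\right\}
\]
satisfies $\prob(E\mid\boldsymbol{\theta}=\theta^*)>0$. Here $\Theta\setminus U\neq\emptyset$ (otherwise the ratio is $0$ and $E$ is null), and, conditioning on $\boldsymbol{\theta}=\theta^*$, we may restrict to the full-probability set where $M_n(x_{1:n})\defeq\sup_{\theta\in\Theta}P_{\theta}^{(1:n)}(x_{1:n})>0$ for every $n$ (indeed $M_n\ge P_{\theta^*}^{(1:n)}>0$ almost surely). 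With $m_n(x_{1:n})\defeq\sup_{\theta'\notin U}P_{\theta'}^{(1:n)}(x_{1:n})$, $s_n(x_{1:n})\defeq\sup_{\tilde{\theta}\in U}P_{\tilde{\theta}}^{(1:n)}(x_{1:n})$ and $\delta_n(x_{1:n})\defeq 1-m_n/M_n\in[0,1]$, the identity $M_n=\max(m_n,s_n)$ gives $m_n/M_n=\min(m_n/s_n,\,1)$, so on $E$ one has $\liminf_{n\to\infty}\delta_n=0$.

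The estimator is built so as to be globally near-optimal while ``spending'' a vanishing budget to step outside $U$ whenever doing so is cheap. Define excursion times recursively: $n_0\defeq 0$, and, given $n_{k-1}$, set $n_k\defeq\min\{n>n_{k-1}:\delta_n(x_{1:n})<1/k\}$ when this set is nonempty, leaving $n_k$ (and all later indices) undefined otherwise; since whether $n=n_k$ depends only on $x_{1:n}$, the excursion set $S(x)\defeq\{n_1,n_2,\dots\}$ is determined causally. Let $\hat{\theta}_n(x_{1:n})$ be, for $n\in S(x)$, some $\theta'\notin U$ with $P_{\theta'}^{(1:n)}(x_{1:n})>m_n(x_{1:n})-\tfrac1n M_n(x_{1:n})$ (which exists, since $m_n>0$ at such $n$), and, for $n\notin S(x)$, some $\theta'$ with $P_{\theta'}^{(1:n)}(x_{1:n})>(1-\tfrac1n)M_n(x_{1:n})$; on the null set $\{M_n=0\}$ set $\hat{\theta}_n$ arbitrarily. (Measurability of $E$, of $S$, and of these selections is automatic because $\Theta$ is countable.) For $n\notin S$ the likelihood ratio $P_{\hat{\theta}_n}^{(1:n)}/M_n$ exceeds $1-\tfrac1n$, and for $n=n_k\in S$ it exceeds $m_n/M_n-\tfrac1n=(1-\delta_n)-\tfrac1n>1-\tfrac1k-\tfrac1n$, with $k\to\infty$ as $n_k\to\infty$; since this ratio never exceeds $1$, it converges to $1$ along the entire sequence, for $P_\theta$-almost every $x$ and every $\theta$, so $\hat{\theta}$ is an approximate maximum likelihood estimator (Definition~\ref{D:approxML}). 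On $E$, $\liminf_n\delta_n=0$ makes every $n_k$ well-defined, so $S(x)$ is infinite, and $\hat{\theta}_n(x_{1:n})\notin U$ for each $n\in S(x)$; hence $\hat{\theta}_n(x_{1:n})$ does not converge to $\theta^*$ on $E$, and since $\prob(E\mid\boldsymbol{\theta}=\theta^*)>0$, $\hat{\theta}$ is not strongly consistent at $\theta^*$. This completes the contrapositive.

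I expect the one genuinely delicate point to be the choice of excursion rule. A naive ``maximise the likelihood, breaking ties towards $\Theta\setminus U$'' estimator need not work: $\limsup_n m_n/s_n\ge 1$ is entirely compatible with $m_n/s_n$ staying strictly below $1-\tfrac1n$ for every $n$, in which case such an estimator would leave $U$ only finitely often and could well remain consistent. Tying the excursion threshold $1/k$ to the running count of past excursions, rather than to $n$, is exactly what reconciles ``infinitely many excursions on $E$'' with ``the likelihood ratio still tending to $1$''. The remaining ingredients---the identity $m_n/M_n=\min(m_n/s_n,1)$, the treatment of sample points outside the support, and the measurability remarks---are routine.
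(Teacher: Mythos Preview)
Your proposal is correct and takes essentially the same approach as the paper: the ``only if'' direction is Lemma~\ref{L:consaML}, and for the ``if'' direction you argue by contraposition, constructing an approximate maximum likelihood estimator that maintains a running counter $k$ and makes an excursion into $U^c$ whenever $m_n/M_n$ exceeds the threshold $1-1/k$, exactly as the paper does with its recursive $k(x_{1:n})$ and threshold $(k-1)/k$. Your write-up is in fact slightly tidier than the paper's: you make the key identity $m_n/M_n=\min(m_n/s_n,1)$ explicit and deduce $\liminf_n\delta_n=0$ on $E$, which cleanly shows every $n_k$ is defined there, whereas the paper's intermediate claim that on $B^*$ one has $\sup_{\Theta}=\sup_{U^{*c}}$ for infinitely many $n$ is not literally forced by $\limsup\ge 1$ (though its conclusion, that $k(x_{1:n})\to\infty$, is still correct for the same reason you give).
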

 
 \begin{proof}
 The `only if' part of this result is Lemma \ref{L:consaML}. For the `if' we suppose $T=(\boldsymbol{\theta},\mathbf{x})$ does not have distinctive likelihoods and will show that there exists an approximate maximum likelihood estimator which is not strongly consistent on $T$.
 
 Since $T$ does not have distinctive likelihoods there is some $\theta^*\in \Theta$, and neighbourhood $U^*$ of $\theta^*$ such that with positive probability when $\mathbf{x}$ is distributed according to $P_{\theta^*}$:  
 $\limsup_{n\to\infty}\frac{\sup_{\theta'\notin U^*}P_{\theta'}^{(1:n)}(\mathbf{x}_{1:n})}{\sup_{\tilde{\theta}\in U^*}P_{\tilde{\theta}}^{(1:n)}(\mathbf{x}_{1:n})}\geq 1$. Let $B^*=\{x\in X: \limsup_{n\to\infty}\frac{\sup_{\theta'\notin U^*}P_{\theta'}^{(1:n)}(x_{1:n})}{\sup_{\tilde{\theta}\in U^*}P_{\tilde{\theta}}^{(1:n)}(x_{1:n})}\geq 1\}$, so that the previous sentence can be rewritten more compactly as:
\begin{equation}\label{eq:ndistlike}
\prob(\mathbf{x}\in B^*|\boldsymbol{\theta}=\theta^*)>0.
\end{equation}

 Now, by Lemma \ref{L:existaML} there exists an approximate maximum likelihood estimator on $T$, $\hat{\theta}$. Using $\hat{\theta}$, we construct an approximate maximum likelihood estimator, $\hat{\theta}^*$, on $T$ which is not strongly consistent, together with a function $k_n:X_{1:n}\rightarrow \setN$ recursively on $n$ as follows:

\begin{eqnarray*}
k(x_1)&=&1\\
\hat{\theta}^*(x_1)&=&\hat{\theta}(x_1)\\
 \end{eqnarray*}
 and, for $n\in\setN$, if there exists a $\theta'\in U^{*c}$ such that  $\frac{P_{\theta'}(x_{1:n+1})}{\sup_{\tilde{\theta}\in \Theta}P_{\tilde{\theta}}(x_{1:n+1})}>\frac{k(x_{1:n})-1}{k(x_{1:n})}$, set $\hat{\theta}^*(x_{1:n+1})$ to be such a $\theta'$ and $k(x_{1:n+1})=k(x_{1:n})+1$.
 
 Otherwise set: 
 \begin{eqnarray*}
k(x_{1:n+1})&=&k(x_{1:n})\\
 \hat{\theta}^*(x_{1:n+1})&=&\hat{\theta}(x_{1:n+1}),
 \end{eqnarray*}

By the axiom of choice an estimator exists satisfying this recursive definition. According to this construction $\hat{\theta}^*$ is an approximate maximum likelihood estimator since if $\lim_{n\to\infty}k(x_{1:n})<\infty$, then $\hat{\theta^*}=\hat{\theta}$ eventually, and if $\lim_{n\to\infty}k(x_{1:n})=\infty$, then for those $n$ such that $k(x_{1:n+1})=1+k(x_{1:n})$, by construction $\frac{P_{\hat{\theta}^*(x_{1:n+1})}(x_{1:n+1})}{\sup_{\tilde{\theta}\in\Theta} P_{\tilde{\theta}}(x_{1:n+1})}>\frac{k(x_{1:n})-1}{k(x_{1:n})}$ which converges to 1 as $n$ (and therefore $k(x_{1:n})$) goes to infinity. Secondly, if $x\in B^*$ then, for infinitely many $n$, $\sup_{\tilde{\theta}\in \Theta}P_{\tilde{\theta}}^{(1:n)}(x_{1:n})=\sup_{\tilde{\theta}\notin U^*}P_{\tilde{\theta}}^{(1:n)}(x_{1:n})$. Therefore $\hat{\theta}^*(x_{1:n})$ is in $U^{*C}$ infinitely often, implying that $\hat{\theta}^*$ does not converge to $\theta^*$ so: $\prob(\hat{\theta}^*\nrightarrow \theta^*|\boldsymbol{\theta}=\theta^*)\geq\prob(\mathbf{x}\in B^*|\boldsymbol{\theta}=\theta^*)>0$ (by \eqref{eq:ndistlike}). Therefore $\hat{\theta}^*$ is an approximate maximum likelihood estimator which is not strongly consistent on $T$.
 
 \end{proof}
 
\begin{thm} There exist large sample estimation problems on which all approximate maximum likelihood estimators are consistent but which do not have distinctive likelihoods.
\end{thm}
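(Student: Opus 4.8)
The target is a problem $T$ that fails to have distinctive likelihoods --- so that Theorem~\ref{T:approxML} already guarantees some approximate maximum likelihood estimator on $T$ is not strongly consistent --- while every approximate maximum likelihood estimator on $T$ is nonetheless (weakly) consistent. Note at the outset that $T$ must have convergent likelihood ratios: otherwise, by Lemmas~\ref{L:freqcons} and~\ref{L:impcons}, $T$ has no consistent estimator at all, whereas by Lemma~\ref{L:existaML} approximate maximum likelihood estimators for $T$ do exist, so the claim would be false. Hence the construction must, in particular, keep all likelihood ratios tending to $0$.

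First I would record what actually drives weak consistency of \emph{every} approximate maximum likelihood estimator. It suffices that there be a fixed $\epsilon\in(0,1)$ such that, for every $\theta^{*}\in\Theta$ and every neighbourhood $U$ of $\theta^{*}$,
\[
\lim_{n\to\infty}\prob\!\left(\sup_{\theta'\notin U}P^{(1:n)}_{\theta'}(\mathbf{x})\ \ge\ (1-\epsilon)\sup_{\tilde\theta\in\Theta}P^{(1:n)}_{\tilde\theta}(\mathbf{x})\ \middle|\ \boldsymbol\theta=\theta^{*}\right)=0 .
\]
Indeed, if $\hat\theta$ is an approximate maximum likelihood estimator on $T$ then, $P_{\theta^{*}}$-almost surely, there is a finite $N=N(\mathbf{x})$ with $P^{(1:n)}_{\hat\theta_{n}(\mathbf{x}_{1:n})}(\mathbf{x}_{1:n})\ge(1-\epsilon)\sup_{\tilde\theta}P^{(1:n)}_{\tilde\theta}(\mathbf{x}_{1:n})$ for all $n>N$; so whenever $n>N$ and $\hat\theta_{n}(\mathbf{x}_{1:n})\notin U$, the event in the display occurs. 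Hence $\prob(\hat\theta_{n}\notin U\mid\boldsymbol\theta=\theta^{*})\le\prob(N\ge n\mid\boldsymbol\theta=\theta^{*})+\prob(\text{display}\mid\boldsymbol\theta=\theta^{*})$, and both terms vanish (the first since $N<\infty$ a.s., the second by hypothesis). So it remains to build $T$ with (i) convergent likelihood ratios, (ii) the displayed ``$\epsilon$-distinctiveness in probability'', and (iii) a failure of distinctive likelihoods: for some $\theta^{*}$ and some neighbourhood $U$ of it, with positive probability $\limsup_{n}\big(\sup_{\theta'\notin U}P^{(1:n)}_{\theta'}(\mathbf{x})\big)\big/\big(\sup_{\tilde\theta\in U}P^{(1:n)}_{\tilde\theta}(\mathbf{x})\big)\ge1$.

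For the construction I would take $\Theta=\{\star\}\cup\{\phi_{m}:m\ge1\}$ with the ``true'' parameter $\star$ isolated (so $U=\{\star\}$ is an admissible neighbourhood, with $\sup_{\tilde\theta\in U}P^{(1:n)}_{\tilde\theta}=P^{(1:n)}_{\star}$), and with the ``trap'' parameters $\phi_{m}$ pairwise distinct and bounded away from $\star$. The distributions are to be tuned so that: no two distributions in $\mathscr P$ share a tail, which makes every pairwise likelihood ratio tend to $0$ almost surely, giving (i); at each fixed time $n$ the probability (under $\star$) that some $\phi_{m}$ has likelihood at least $(1-\epsilon)$ times the running maximum is small, giving (ii); yet such near-ties recur --- with positive probability, for infinitely many $n$ some $\phi_{m}$ has likelihood essentially equal to the maximum --- giving (iii). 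Then (i)+(ii) together with the auxiliary claim above give weak consistency of every approximate maximum likelihood estimator on $T$, while (iii) with Theorem~\ref{T:approxML} shows that $T$ lacks distinctive likelihoods, as required.

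The principal difficulty is making (ii) and (iii) coexist. A \emph{single} fixed alternative distribution has a geometrically decaying likelihood ratio, so it is within a constant factor of the truth only on geometrically rare --- hence summable --- events, which would make (iii) impossible; one is therefore forced to use infinitely many traps whose ``distinguishing rates'' against $\star$ tend to $0$. The shape of the solution I would aim for is: the $m$-th trap $\phi_{m}$ stays ``alive'' (positive likelihood) but is essentially dormant except near one designated time scale, at which the event that $\phi_{m}$'s likelihood rises to the level of $\star$'s is atypical with probability of order $1/m$. Then these competitiveness probabilities tend to $0$ --- and, once the indexing is arranged so that at any fixed $n$ only boundedly many $\phi_{m}$ are candidates, the supremum over $m$ still tends to $0$, giving (ii) --- while they are not summable, so by Borel--Cantelli and the independence of successive observations (iii) holds. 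The real work is in the bookkeeping: simultaneously keeping every pairwise likelihood ratio a.s.\ convergent to $0$ (no shared tails, with enough uniformity that the supremum over traps does not stabilise at a positive value), arranging the per-time competitiveness probabilities to be both vanishing and non-summable, and controlling $\sup_{m}P^{(1:n)}_{\phi_{m}}(\mathbf{x})$ at every finite $n$.
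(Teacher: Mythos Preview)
Your outline is correct and matches the paper's approach almost exactly: the paper takes $\Theta=\{0\}\cup\setN$ (your $\star$ is $0$, your traps $\phi_{m}$ are the positive integers), $X_{i}=\{0,1,2\}$, and, under $\boldsymbol\theta=0$, independent observations with $\prob(\mathbf{x}_{n}=1\mid\boldsymbol\theta=0)=1/n$; the trap $\theta=k$ is arranged so that on $\{0,1\}$-strings $P^{(1:n)}_{k}=P^{(1:n)}_{0}/n$ for $k>n$, while $P^{(1:n)}_{n}=\tfrac{n}{n-1}P^{(1:n)}_{0}$ when $x_{n}=1$, and a third symbol $2$ kills every trap's likelihood after its designated time. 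Thus at time $n$ the only competitive alternative is $\theta=n$, competitive precisely on $\{x_{n}=1\}$, an event of probability $1/n$ under $\theta=0$: non-summable, so Borel--Cantelli gives your (iii); vanishing, so your (ii) holds; and the third symbol is exactly the device that makes your ``only boundedly many candidates at each $n$'' precise and gives (i).

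The only gap is that you stop at the outline and explicitly defer ``the real work \ldots\ the bookkeeping'', which is in fact the bulk of the paper's argument --- the concrete construction above and the separate verification that under each $\boldsymbol\theta=k\ge1$ every approximate ML estimator is eventually equal to $k$ almost surely. Your displayed sufficient condition for weak consistency of \emph{all} approximate ML estimators is a clean abstraction the paper does not isolate; the paper instead argues the $\boldsymbol\theta=0$ and $\boldsymbol\theta\ne0$ cases directly.
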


\begin{proof}
Consider a large sample estimation problem $T=(\boldsymbol{\theta},\mathbf{x})$ such that $\Theta=\setN\cup\{0\}$, and $X_i=\{0,1,2\}$, where $(\mathbf{x}_n)_{n\in\setN}$ is a sequence of independent random variables when conditioned on $\boldsymbol{\theta}$, with conditional distributions given by:

\begin{equation*}
\prob(\mathbf{x}_n=x|\boldsymbol{\theta}=0)=\left\{\begin{array}{ll}
\frac{n-1}{n} & \text{if }x=0\\
\frac{1}{n} & \text{if }x=1\\
0 & \text{if }x=2,
\end{array}
\right.
\end{equation*}

and for $k\in\setN$:

\begin{equation*}
\prob(\mathbf{x}_n=x|\boldsymbol{\theta}=k)=\left\{\begin{array}{ll}
1 & \text{if }x=1\text{ and either }n=k\text{ or }n=1\\
\left(\frac{n-1}{n}\right)^2 & \text{if }1<n<k\text{ and }x=0\\
\frac{n-1}{n^2} & \text{if }1<n<k\text{ and }x=1\\
\frac{1}{n} & \text{if }1<n<k\text{ and }x=2\\
1 & \text{if }n>k\text{ and }x=2\\
0 & \text{otherwise}.
\end{array}
\right.
\end{equation*}

The critical finite observation sequences to consider in this estimation problem are those consisting of only zeroes and ones, which we will divide into those where the final observation, $x_n$, is 0 and those where it is 1. Note that the conditional distributions have been defined above so that when $x_i\in\{0,1\}$ and $1<i<k$, $\prob(\mathbf{x}_i=x_i|\boldsymbol{\theta}=k)=\frac{i-1}{i}\prob(\mathbf{x}_i=x_i|\boldsymbol{\theta}=0)$. Therefore, when $x_{1:n}$ is a finite observation sequence of zeroes and ones, and $n<k$ the likelihood $P_\theta^{(1:n)}(x_{1:n})$ is given by
\begin{equation}\label{eq:end0}
\begin{aligned}
P_k^{(1:n)}(x_{1:n})&=\prod_{i=2}^n \left(\frac{i-1}{i}\right)P_0^{(1:n)}(x_{1:n})\\
&=\frac{1}{n}(P_0^{(1:n)}(x_{1:n})).
\end{aligned} 
\end{equation}

Note that for $n=1$, \eqref{eq:end0} still holds, since $P_\theta^{(1)}(1)=1$ for all $\theta\in\Theta$.

For a finite observation sequences of length $n$ ending in 0 the only possible values of $\theta$ with positive likelihood are 0 and those greater than $n$. If a finite observation sequence ends in $1$ then  $\theta=n$ also has positive likelihood, which is $1$ if $n=1$ and otherwise is given by the following equation.
\begin{equation}\label{eq:end1}
\begin{aligned}
P_n^{(1:n)}(x_{1:n})&=P_n^{(1:n-1)}(x_{1:n-1})\\
&=\frac{1}{n-1}(P_0^{(1:n-1)}(x_{1:n-1}))\\
&=\frac{n}{n-1}(P_0^{(1:n)}(x_{1:n})).
\end{aligned}
\end{equation}

Therefore the maximum likelihood estimate of $\theta$ is 0 in the case of finite observation sequences of zeroes and ones ending in 0, and is $n$ in the case of finite observation sequences of zeroes and ones, of length at least 2, ending in 1 (for $n=1$ every member of $\Theta$ has maximum likelihood). Now, $\sum_{i=1}^\infty P^i_0(1)=\sum_{i=1}^\infty \frac{1}{i}=\infty$, so, by the second Borel-Cantelli lemma, $\prob(\limsup_{n\to\infty} \{\mathbf{x}_n=1\}|\boldsymbol{\theta}=0)=1$. Therefore, when $\theta=0$ the maximum likelihood estimate is non-zero infinitely often, so the maximum likelihood estimate, while it exists, is not strongly consistent. Thus, by Theorem \ref{T:MLlikecons}, $T$ does not have distinctive likelihoods.

It remains to show that all approximate maximum likelihood estimates are consistent on $T$. We show that all approximate maximum likelihood estimators converge in probability to $\boldsymbol{\theta}$ by dividing into the cases where $\boldsymbol{\theta}=0$ and where $\boldsymbol{\theta}\ne 0$. Considering the latter case first, note that when $\boldsymbol{\theta}=k\ne 0$, $\mathbf{x}_{k:\infty}=(1,2,2,2,\ldots)$ with probability 1, so: \begin{equation}
\lim_{n\to\infty}P_k^{(1:n)}(\mathbf{x}_{1:n})=P_k^{(1:k-1)}(\mathbf{x}_{1:k-1})>0.\label{eq:Pk+}
\end{equation} 
Also for any $i$ less than $k$, $P_i^{(k)}(1)=0$ and for any $i>k$, $P_i^{(i)}(2)=0$, while $P_k^{(k)}(1)=P_k^{(i)}(2)=1$ for all $i>k$. Therefore when $\boldsymbol{\theta}=k$, $\theta\leq n$, $k\leq n$ and $\theta\ne k$, $P_\theta^{(1:n)}(\mathbf{x}_{1:n})=0$ with probability 1. Hence, with probability 1 when $\mathbf{x}$ is distributed according to $P_k$:
\begin{equation}\label{eq:Psup0}
\begin{aligned}
\lim_{n\to\infty}\sup_{\theta\in\Theta\setminus\{k\}}P_\theta^{(1:n)}(\mathbf{x}_{1:n})
&=\lim_{n\to\infty}\sup_{\theta>n}P_\theta^{(1:n)}(\mathbf{x}_{1:n})\\
&\leq\lim_{n\to\infty}(P_k^{(1:k-1)}(\mathbf{x}_{1:k-1}))(P_{n+1}^{(k)}(1))(P_{n+1}^{(k+1)}(2))^{n-k}\\
&=0.
\end{aligned}
\end{equation}

Combining \eqref{eq:Pk+} and \eqref{eq:Psup0}, for any $k\ne 0$: 
\begin{equation*}
\prob\left(\liminf_{n\to\infty}\frac{\sup_{\theta\ne k} P_\theta^{(1:n)}(\mathbf{x}_{1:n})}{P_k^{(1:n)}(\mathbf{x}_{1:n})}=0\middle|\boldsymbol{\theta}=k\right)=1,
\end{equation*} 
and so in particular also
\begin{equation*}
\prob\left(\liminf_{n\to\infty}\frac{\sup_{\theta\ne k} P_\theta^{(1:n)}(\mathbf{x}_{1:n})}{\sup_{\theta\in\Theta} P_\theta^{(1:n)}(\mathbf{x}_{1:n})}=0\middle|\boldsymbol{\theta}=k\right)=1,
\end{equation*} 
because it is always the case that $P_k^{(1:n)}(\mathbf{x}_{1:n})\le \sup_{\theta\in\Theta} P_\theta^{(1:n)}(\mathbf{x}_{1:n})$.

Now, if $\theta_n$ is a sequence in $\Theta$ not eventually equal to $k$, then, with probability $1$ when $\boldsymbol{\theta}=k$,

\begin{equation*}
\liminf_{n\to\infty}\frac{P_{\theta_n}^{(1:n)}(\mathbf{x}_{1:n})}{ \sup_{\theta\in\Theta} P_\theta^{(1:n)}(\mathbf{x}_{1:n})}\leq\liminf_{n\to\infty}\frac{\sup_{\theta\ne k}P_{\theta}^{(1:n)}(\mathbf{x}_{1:n})}{ \sup_{\theta\in\Theta} P_\theta^{(1:n)}(\mathbf{x}_{1:n})}=0.
\end{equation*} 

By contrast, for any approximate maximum likelihood estimator $\hat{\theta}_n$ it is the case by Definition~\ref{D:approxML} that when $\boldsymbol{\theta}=k$
then, with probability $1$,
\begin{equation*} \liminf_{n\to\infty}\frac{P_{\hat{\theta}_n(\mathbf{x}_{1:n})}^{(1:n)}(\mathbf{x}_{1:n})}{\sup_{\theta\in\Theta}P_\theta^{(1:n)}(\mathbf{x}_{1:n})}=1.
\end{equation*}
Thus, for any $k>0$, every approximate maximum likelihood estimator must eventually be equal to $k$ with probability 1 conditioned on $\boldsymbol{\theta}=k$. A fortiori, every approximate maximum likelihood estimator converges in probability to $\boldsymbol{\theta}$ when $\boldsymbol{\theta}\ne 0$.  

When $\boldsymbol{\theta}=0$, $\mathbf{x}_n\in\{0,1\}$ for all $n\in\setN$. Note that by the second Borel-Cantelli lemma, $\prob(\limsup_{n\to\infty} \{\mathbf{x}_n=0\}|\boldsymbol{\theta}=0)=1$. Thus, let $(\mathbf{x}_{\mathbf{n}_m})_{m\in\setN}$ be the infinite subsequence of $(\mathbf{x}_n)_{n\in\setN}$ such that $\mathbf{x}_{\mathbf{n}_m}=0$ for all $m\in\setN$. Thus for all $m\in\setN$, $\mathbf{x}_{1:\mathbf{n}_m}$ is a finite observation sequence of zeroes and ones ending in 0.  Above we noted that given such a sequence of length $n$, the only possible values for $\theta$ with positive likelihood are 0 and those values greater than $n$. Further the ratio between the likelihood of 0 and that of any value greater than $n$ is given by \eqref{eq:end0}. Therefore with probability 1 when conditioned on $\boldsymbol{\theta}=0$  
\begin{equation*}
\liminf_{k\to\infty}\frac{\sup_{\theta\ne 0}P_\theta^{(1:n_k)}(\mathbf{x}_{1:n_k})}{P_0^{(1:n_k)}(\mathbf{x}_{1:n_k})}\leq\liminf_{k\to\infty}\frac{1}{\mathbf{n}_k}=0.
\end{equation*}
Therefore, for any approximate maximum likelihood estimator $\hat{\theta}$, $\hat{\theta}_{n_k}$ is eventually equal to 0, when conditioned on $\boldsymbol{\theta}=0$. Therefore: 
\begin{eqnarray*}
	\lim_{n\to\infty} \prob(\hat{\theta}=0|\boldsymbol{\theta}=0)&\geq& \lim_{n\to\infty}\prob(\mathbf{x}_n=0|\boldsymbol{\theta}=0) \\
	&=&\lim_{n\to\infty}\frac{n-1}{n}=1.
\end{eqnarray*}
Thus, for any approximate maximum likelihood estimator $\hat{\theta}$:
\begin{eqnarray*}
	\lim_{n\to\infty}\prob(\hat{\theta}_n=\boldsymbol{\theta})&=&\prob(\boldsymbol{\theta}=0)\lim_{n\to\infty}\prob(\hat{\theta}_n=0|\boldsymbol{\theta}=0)\\
	&&\quad+\prob(\boldsymbol{\theta}\ne 0)\lim_{n\to\infty}\prob(\hat{\theta}_n=\boldsymbol{\theta}|\boldsymbol{\theta}\ne 0)\\
		&=& 1
\end{eqnarray*}
That is, all approximate maximum likelihood estimators are consistent on $T$, despite $T$ not having distinctive likelihoods. 
\end{proof}

\section{Strict Minimum Message Length}\label{S:SMML}
In the discrete case SMML is ordinarily defined as the (or an) estimator with the minimum \emph{expected message length}, as defined below \cite[p. 155]{Wallace2005}:

\begin{defi}\label{D:messageLength}
The \emph{expected message length} of an estimator $\hat{\theta}_n$ given $n$ data points is:\footnote{For readers familiar with Wallace's notation of \cite{Wallace2005}, we give a translation: $I_1(\hat{\theta}_n)=I_1$ (the only difference being Wallace's use of $I_1$ leaves reference to a particular estimator implicit), $x_{1:n}=x$ (again the only difference here is that we make the number of observations explicit), $\prob(\mathbf{x}_{1:n}=x_{1:n})=r(x)$, $\hat{\theta}_n(x_{1:n})=m(x)$, and finally $P^{(1:n)}_{\theta}(x_{1:n})=f(x|\theta)$, so that $P_{\hat{\theta}_n(x_{1:n})}^{(1:n)}(x_{1:n})=f(x|m(x))$.}
\begin{equation*}\label{eq:messageLength}
I_1(\hat{\theta}_n(\mathbf{x}_{1:n}))\defeq H(\hat{\theta}_n)\,\ -\!\!\sum_{x_{1:n}\in X_{1:n}}\prob(\mathbf{x}_{1:n}=x_{1:n})\log(P^{(1:n)}_{\hat{\theta}_n(x_{1:n})}(x_{1:n})),
\end{equation*}
where $H$ represents Shannon's entropy.\footnote{As is commonly done in information theory, here and everywhere we take any expression evaluating to ``$0 \log 0$'' to equal $0$.}
\end{defi}

By Shannon's source coding theorem \cite{shannon1948}, $I_1(\hat{\theta}_n)$ is a lower bound on the expected length of a two-part \emph{explanation message}, conveying first the estimate $\hat{\theta}_n$ using an encoding designed only given knowledge of the estimation problem and estimation method, and then conveying the observations $\mathbf{x}_{1:n}$ using an encoding which is efficient assuming $\mathbf{x}_{1:n}$ is distributed according to $P_{\hat{\theta}_n}^{(1:n)}(\mathbf{x}_{1:n})$. By the same theorem this is necessarily greater than or equal to the expected length of a one-part message conveying just the observations $\mathbf{x}_{1:n}$. 

One consequence of this is that if $H(\mathbf{x}_{1:n})$ is infinite, then $I_1$ is also infinite for every estimator, and so cannot be used to compare estimators. We thus consider the standard definition of SMML in the discrete case in terms of $I_1$ to be implicitly restricted to cases where the observations have finite entropy. Since we wish to make no such restriction we shall instead define SMML in terms of the \emph{excess expected message length} analogously to the standard treatment of the continuous case as in \cite[][p.~168]{Wallace2005} or \cite[][p.~3]{brand2016neymanscott}:

\begin{defi}\label{D:eMessageLength}
The \emph{excess expected message length} of an estimator $\hat{\theta}_n$ is given by the equation:
\begin{equation*}
I_2(\hat{\theta}_n)\defeq H(\hat{\theta}_n(\mathbf{x}_{1:n}))\,\  + \!\!\sum_{x_{1:n}\in X_{1:n}}\prob(\mathbf{x}_{1:n}=x_{1:n})\log\left(\frac{\prob(\mathbf{x}_{1:n}=x_{1:n})}{P^{(1:n)}_{\hat{\theta}_n(x_{1:n})}(x_{1:n})}\right)
\end{equation*}
\end{defi}

We now define the SMML estimator class by minimisation of $I_2$.

\begin{defi}\label{D:SMML}
An estimator $(\hat{\theta}_n)$ is a \emph{strict minimum message length} estimator if, for all $n\in\setN$:
\begin{equation*}
I_2(\hat{\theta}_n)=\inf_{F\in \Theta^{X_{1:n}}}I_2(F),
\end{equation*}
where $\Theta^{X_{1:n}}$ is the set of all functions from $X_{1:n}$ to $\Theta$ - that is, the set of all estimators for $n$ observations.
\end{defi}

Note that when $H(\mathbf{x}_{1:n})$ is finite $I_2(\hat{\theta}_n)$ equals $I_1(\hat{\theta}_n)-H(\mathbf{x}_{1:n})$, and because $H(\mathbf{x}_{1:n})$ does not depend on $\hat{\theta}_n$ the two functions necessarily reach their minimum at the same $\hat{\theta}_n$ values. When $H(\mathbf{x}_{1:n})<\infty$, this definition therefore agrees with that of \cite{Wallace2005}, while when $H(\mathbf{x}_{1:n})=\infty$ this definition remains applicable even though the definition of \cite{Wallace2005} does not.

 Intuitively, $I_2(\hat{\theta}_n)$ gives  the expected difference between an explanation message as described above, and an efficient one part message, encoding the same data. Since it is always possible to reconstruct the data from an explanation message, $I_2$ is bounded below by 0. We show this below along with the conditions under which this lower bound for $I_2$ is obtained. We  use these conditions to construct estimation problems for which the SMML estimators have 0 excess expected message length, simplifying the task of analysing their consistency. The proof relies on the well-known \emph{Gibbs inequality}:
 
 \begin{lemma}[Gibbs' Inequality]\label{L:Gibbs}
If $\mathbf{P}$ and $\mathbf{Q}$ are probability distributions over $\Omega$ ($\Omega$ countable) then
\begin{equation*}
\sum_{\omega\in\Omega} \mathbf{P}(\omega)\log\left(\frac{\mathbf{P}(\omega)}{\mathbf{Q}(\omega)}\right)\geq 0,
\end{equation*}
with equality if and only if $\mathbf{P}(\omega)=\mathbf{Q}(\omega)$ for all $\omega\in\Omega$.
\end{lemma}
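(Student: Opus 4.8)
\section*{Proof proposal}

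The plan is to reduce Gibbs' inequality to the elementary estimate $\ln t\le t-1$ for $t>0$, which is strict unless $t=1$ (seen by noting $g(t)=t-1-\ln t$ has $g(1)=0$ and $g'(t)=1-1/t$, so $t=1$ is its unique minimiser); since the sign of the displayed sum is unchanged by the choice of base for the logarithm (as long as it exceeds $1$), I would work with natural logarithms throughout. First I would clear the degenerate case: if $\mathbf{P}(\omega_0)>0$ while $\mathbf{Q}(\omega_0)=0$ for some $\omega_0$, then under the standing convention that term equals $+\infty$, so the whole sum is $+\infty$ and both the inequality and its strictness are immediate. Hence I may assume $\mathbf{Q}(\omega)=0\Rightarrow\mathbf{P}(\omega)=0$ and set $\Omega'=\{\omega:\mathbf{P}(\omega)>0\}$, so that the sum effectively ranges over $\Omega'$, on which $\mathbf{Q}>0$.

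For each $\omega\in\Omega'$ I would apply $\ln t\le t-1$ at $t=\mathbf{Q}(\omega)/\mathbf{P}(\omega)$ and multiply through by $\mathbf{P}(\omega)>0$ to obtain the termwise bound $\mathbf{P}(\omega)\ln\frac{\mathbf{P}(\omega)}{\mathbf{Q}(\omega)}\ge\mathbf{P}(\omega)-\mathbf{Q}(\omega)$, with equality exactly when $\mathbf{P}(\omega)=\mathbf{Q}(\omega)$. Let $c(\omega)\ge0$ be the gap between the two sides. Summing over $\Omega'$, the right-hand side sums to $1-\sum_{\omega\in\Omega'}\mathbf{Q}(\omega)\ge0$, so either $\sum_{\omega\in\Omega'}\mathbf{P}(\omega)\ln\frac{\mathbf{P}(\omega)}{\mathbf{Q}(\omega)}=+\infty$ and we are done, or that sum is finite and equals $\big(1-\sum_{\omega\in\Omega'}\mathbf{Q}(\omega)\big)+\sum_{\omega\in\Omega'}c(\omega)$, a sum of two non-negative quantities, hence $\ge0$. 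This proves the inequality.

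For the equality clause, $\sum_{\omega\in\Omega'}\mathbf{P}(\omega)\ln\frac{\mathbf{P}(\omega)}{\mathbf{Q}(\omega)}=0$ forces both non-negative summands above to vanish: $\sum_{\omega\in\Omega'}\mathbf{Q}(\omega)=1$ (so $\mathbf{Q}$ too is supported on $\Omega'$) and $\sum_{\omega\in\Omega'}c(\omega)=0$, whence $c\equiv0$ on the countable set $\Omega'$, i.e.\ $\mathbf{P}=\mathbf{Q}$ on $\Omega'$; together these give $\mathbf{P}=\mathbf{Q}$ throughout $\Omega$, and the converse (every term $0$ when $\mathbf{P}=\mathbf{Q}$) is trivial. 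The one point that needs care --- the closest thing to an obstacle here --- is the bookkeeping with infinite sums of signed terms: one must check that the termwise inequality may legitimately be summed and rearranged (safe because the negative part of the left-hand side is dominated by the summable $|\mathbf{P}(\omega)-\mathbf{Q}(\omega)|$, so the sum is well-defined in $(-\infty,+\infty]$, and $\sum_{\omega\in\Omega'}(\mathbf{P}(\omega)-\mathbf{Q}(\omega))$ is finite, legitimising the split) and that a termwise ``$\ge$'' together with equality of the two sums forces termwise equality (safe because the gap function is non-negative with vanishing sum).
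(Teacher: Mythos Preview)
Your proof is correct and follows the standard textbook derivation of Gibbs' inequality via the elementary bound $\ln t\le t-1$. The paper, however, does not prove this lemma at all: it is stated as a well-known result and used without proof in the argument for Lemma~\ref{L:SMML}. So there is nothing to compare against; your write-up simply fills in what the paper takes for granted, and your care with the summability of the signed terms and the equality clause is appropriate.
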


\begin{lemma}\label{L:SMML}
\begin{equation*}
I_2(\hat{\theta}_n)\geq 0
\end{equation*}
with equality if and only if, for every $\theta\in\Theta$ such that $\prob(\hat{\boldsymbol{\theta}}_n=\theta)>0$, for every $x_{1:n}$ such that $\hat{\theta}_n(x_{1:n})=\theta$,
\begin{equation}\label{eq:condeqlike}
\prob(\mathbf{x}_{1:n}=x_{1:n}|\boldsymbol{\hat{\theta}}_n=\theta)=P_{\theta}^{(1:n)}(x_{1:n}).
\end{equation}
\end{lemma}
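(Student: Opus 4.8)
The plan is to realise $I_2(\hat\theta_n)$ as a Kullback--Leibler divergence between two genuine probability distributions on the (countable) product set $\Theta\times X_{1:n}$, so that both the inequality and its equality case fall out of Gibbs' inequality (Lemma~\ref{L:Gibbs}). Write $r(x)\defeq\prob(\mathbf{x}_{1:n}=x)$, $m(x)\defeq\hat\theta_n(x_{1:n})$, and $q(\theta)\defeq\prob(\boldsymbol{\hat{\theta}}_n=\theta)=\sum_{x:\,m(x)=\theta}r(x)$. On $\Theta\times X_{1:n}$ introduce $R(\theta,x)\defeq r(x)\,\mathbb{I}_{\{m(x)=\theta\}}$, which is the (degenerate, supported on a graph) joint law of the pair $(\boldsymbol{\hat{\theta}}_n,\mathbf{x}_{1:n})$, and $\tilde{Q}(\theta,x)\defeq q(\theta)\,P^{(1:n)}_\theta(x)$. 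The preliminary point that makes the whole argument go through is that $\tilde{Q}$ is itself a probability distribution: summing first over $x$ and then over $\theta$,
\[
\sum_{\theta\in\Theta}\sum_{x\in X_{1:n}}\tilde{Q}(\theta,x)=\sum_{\theta\in\Theta}q(\theta)\sum_{x\in X_{1:n}}P^{(1:n)}_\theta(x)=\sum_{\theta\in\Theta}q(\theta)=1,
\]
since each $P^{(1:n)}_\theta$ sums to $1$ over $X_{1:n}$ and $q$ sums to $1$ over $\Theta$.

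The next step is to verify the identity $I_2(\hat\theta_n)=\sum_{\theta,x}R(\theta,x)\log\frac{R(\theta,x)}{\tilde{Q}(\theta,x)}$. As $R$ is supported on the graph $\{(m(x),x)\}$, the right-hand side collapses to $\sum_{x\in X_{1:n}}r(x)\log\frac{r(x)}{q(m(x))\,P^{(1:n)}_{m(x)}(x)}$; splitting the logarithm turns this into $\sum_x r(x)\log\frac{r(x)}{P^{(1:n)}_{m(x)}(x)}-\sum_x r(x)\log q(m(x))$, and regrouping the second sum by the value of $m(x)$ identifies it with $-\sum_\theta q(\theta)\log q(\theta)=H(\boldsymbol{\hat{\theta}}_n)$, recovering exactly the expression of Definition~\ref{D:eMessageLength}. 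One should check here that the various countable sums are simultaneously meaningful under the convention $0\log 0=0$; this causes no trouble, because the negative part of a relative entropy between two probability distributions is bounded below (by $-1$), so $\sum_{\theta,x}R\log(R/\tilde{Q})$ is always well defined in $[0,\infty]$.

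Given the identity, Gibbs' inequality immediately yields $I_2(\hat\theta_n)=D_{\mathrm{KL}}(R\,\|\,\tilde{Q})\ge 0$, and its equality clause says this is zero if and only if $R(\theta,x)=\tilde{Q}(\theta,x)$ for every $(\theta,x)$. I would then unpack this pointwise equality. For pairs with $m(x)\ne\theta$ it reads $q(\theta)\,P^{(1:n)}_\theta(x)=0$, so for every $\theta$ with $q(\theta)>0$ the distribution $P^{(1:n)}_\theta$ is supported inside $\hat\theta_n^{-1}(\theta)$; for pairs with $m(x)=\theta$ it reads $r(x)=q(\theta)\,P^{(1:n)}_\theta(x)$, that is $\prob(\mathbf{x}_{1:n}=x_{1:n}\mid\boldsymbol{\hat{\theta}}_n=\theta)=P^{(1:n)}_\theta(x_{1:n})$ whenever $q(\theta)>0$ and $\hat\theta_n(x_{1:n})=\theta$. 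The second statement already forces the first (summing it over $x_{1:n}\in\hat\theta_n^{-1}(\theta)$ puts total $P^{(1:n)}_\theta$-mass $1$ on that set), so the equality condition reduces precisely to~\eqref{eq:condeqlike}.

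The part needing the most care is not conceptual but the bookkeeping around infinite and degenerate quantities: one must allow $H(\boldsymbol{\hat{\theta}}_n)$, $H(\mathbf{x}_{1:n})$, or individual logarithm terms to be infinite --- in particular the case $P^{(1:n)}_\theta(x_{1:n})=0$ for some $x$ with $r(x)>0$, which forces $I_2(\hat\theta_n)=+\infty$ and is consistent with~\eqref{eq:condeqlike} failing --- and confirm that the passage from Definition~\ref{D:eMessageLength} to $D_{\mathrm{KL}}(R\,\|\,\tilde{Q})$ remains valid across all these cases. Once that is pinned down, the result is a one-line consequence of Gibbs' inequality and its equality condition.
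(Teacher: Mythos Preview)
Your argument is correct and close in spirit to the paper's, but the packaging is different. The paper rewrites $I_2(\hat\theta_n)$ as a convex combination over $\theta\in\Theta^*$ of the relative entropies $\sum_{x\in\hat\theta_n^{-1}(\theta)}\prob(\mathbf{x}_{1:n}=x\mid\hat{\boldsymbol\theta}_n=\theta)\log\frac{\prob(\mathbf{x}_{1:n}=x\mid\hat{\boldsymbol\theta}_n=\theta)}{P_\theta^{(1:n)}(x)}$ and then applies Gibbs' inequality once per $\theta$; you instead realise $I_2$ as a single KL divergence $D_{\mathrm{KL}}(R\,\|\,\tilde Q)$ on the product space $\Theta\times X_{1:n}$ and apply Gibbs once. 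The two are related by the chain rule for relative entropy (the $\Theta$-marginals of $R$ and $\tilde Q$ coincide, so the conditional term reproduces the paper's decomposition). Your route is slightly cleaner in that both $R$ and $\tilde Q$ are honest probability distributions on the same countable set, so Lemma~\ref{L:Gibbs} applies verbatim; in the paper's per-$\theta$ application the second distribution $P_\theta^{(1:n)}$ restricted to $\hat\theta_n^{-1}(\theta)$ is only a sub-probability, which still yields the inequality but makes the equality analysis marginally more delicate. Conversely, the paper's decomposition makes the equality condition~\eqref{eq:condeqlike} drop out immediately fibre by fibre, whereas you need the extra (easy) step of checking that~\eqref{eq:condeqlike} already forces $P_\theta^{(1:n)}$ to be supported on $\hat\theta_n^{-1}(\theta)$.
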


\begin{proof}
		
Let $\Theta^*=\{\theta\in\Theta:\prob(\hat{\boldsymbol{\theta}}_n=\theta)>0\}$.
For $\theta\in\Theta^*$,
let $\hat{\theta}_n^{-1}(\theta)=\{x_{1:n}\in X_{1:n}|\hat{\theta}_n(x_{1:n})=\theta\}$ and note that for all $x_{1:n}\in\hat{\theta}^{-1}(\theta)$, $\prob(\mathbf{x}_{1:n}=x_{1:n}|\hat{\boldsymbol{\theta}}_n=\theta)=\prob(\mathbf{x}_{1:n}=x_{1:n})/\prob(\hat{\boldsymbol{\theta}}_n=\theta)$, and furthermore that
\[
\prob(\mathbf{x}_{1:n}=x_{1:n})=\sum_{\theta\in\Theta^*}\prob(\hat{\boldsymbol{\theta}}_n=\theta)\prob(\mathbf{x}_{1:n}=x_{1:n}|\hat{\boldsymbol{\theta}}_n=\theta).
\]

Using this, we get
\begin{equation}\label{eq:SMMLGibbs}
\begin{aligned}
I_2(\hat{\theta}_n)&=H(\hat{\theta}_n(\mathbf{x}_{1:n})) + \!\!\!\sum_{x_{1:n}\in X_{1:n}}\!\!\!\prob(\mathbf{x}_{1:n}=x_{1:n})\log\left(\frac{\prob(\mathbf{x}_{1:n}=x_{1:n})}{P^{(1:n)}_{\hat{\theta}_n(x_{1:n})}(x_{1:n})}\right)\\
&=-\sum_{\theta\in\Theta^*} \prob(\hat{\boldsymbol{\theta}}_n=\theta)\log(\prob(\hat{\boldsymbol{\theta}}_n=\theta))+\sum_{\theta\in\Theta^*}\prob(\hat{\boldsymbol{\theta}}_n=\theta)\left[\vphantom{\sum_{x_{1:n}\in\hat{\theta}^{-1}_n(\theta)}\prob(\mathbf{x}_{1:n}=x_{1:n}|\hat{\boldsymbol{\theta}}_n=\theta)\log\left(\frac{\prob(\mathbf{x}_{1:n}=x_{1:n})}{P^{(1:n)}_{\hat{\theta}_n(x_{1:n})}(x_{1:n})}\right)}\right. \\
&\qquad\qquad\qquad\qquad \left.\sum_{x_{1:n}\in\hat{\theta}^{-1}_n(\theta)}\prob(\mathbf{x}_{1:n}=x_{1:n}|\hat{\boldsymbol{\theta}}_n=\theta)\log\left(\frac{\prob(\mathbf{x}_{1:n}=x_{1:n})}{P^{(1:n)}_{\hat{\theta}_n(x_{1:n})}(x_{1:n})}\right)\right]\\
&=\sum_{\theta\in\Theta^*} \prob(\hat{\boldsymbol{\theta}}_n=\theta)\left[\sum_{x_{1:n}\in\hat{\theta}^{-1}_n(\theta)}\prob(\mathbf{x}_{1:n}=x_{1:n}|\hat{\boldsymbol{\theta}}_n=\theta)\vphantom{\log\left(\frac{\prob(\mathbf{x}_{1:n}=x_{1:n}|\hat{\boldsymbol{\theta}}_n=\theta)}{P^{(1:n)}_{\hat{\theta}_n(x_{1:n})}(x_{1:n})}\right)}\right.\\
&\qquad\qquad\qquad\qquad\qquad\qquad\qquad\qquad\left.\log\left(\frac{\prob(\mathbf{x}_{1:n}=x_{1:n}|\hat{\boldsymbol{\theta}}_n=\theta)}{P^{(1:n)}_{\hat{\theta}_n(x_{1:n})}(x_{1:n})}\right)\right]
\end{aligned}
\end{equation}

By Gibbs' inequality the inner sum in \eqref{eq:SMMLGibbs} is non-negative, and is 0 for exactly those values of $\theta$ such that equation \eqref{eq:condeqlike} holds for every $x_{1:n}$ such that $\hat{\theta}_n(x_{1:n})=\theta$. Therefore $I_2(\hat{\theta}_n)=0$ if and only if \eqref{eq:condeqlike} holds for all $\theta\in\Theta^*$.
\end{proof}

\begin{cor}\label{C:SMML} If $I_2(\hat{\theta}_n)=0$ and $\theta_1,\theta_2\in\Theta$ are distinct, and both $\prob(\hat{\boldsymbol{\theta}}_n=\theta_1)$ and $\prob(\hat{\boldsymbol{\theta}}_n=\theta_2)$ are positive, then $\supp\left(P_{\theta_1}^{(1:n)}\right)\cap\supp\left(P_{\theta_2}^{(1:n)}\right)=\emptyset$.

Therefore $\{\supp\left(P_\theta^{(1:n)}\right):\prob(\hat{\boldsymbol{\theta}}_n=\theta)>0\}$ partitions $\supp(\mathbf{x}_{1:n})$. 
\end{cor}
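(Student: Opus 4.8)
The plan is to simply unpack Lemma~\ref{L:SMML}. Write $\Theta^*\defeq\{\theta\in\Theta:\prob(\hat{\boldsymbol{\theta}}_n=\theta)>0\}$ and, for $\theta\in\Theta^*$, let $\hat{\theta}_n^{-1}(\theta)\defeq\{x_{1:n}\in X_{1:n}:\hat{\theta}_n(x_{1:n})=\theta\}$. Since $I_2(\hat{\theta}_n)=0$, Lemma~\ref{L:SMML} supplies, for every $\theta\in\Theta^*$ and every $x_{1:n}\in\hat{\theta}_n^{-1}(\theta)$, the identity $\prob(\mathbf{x}_{1:n}=x_{1:n}\mid\hat{\boldsymbol{\theta}}_n=\theta)=P_\theta^{(1:n)}(x_{1:n})$.

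The one step worth isolating is the support containment $\supp(P_\theta^{(1:n)})\subseteq\hat{\theta}_n^{-1}(\theta)$ for each $\theta\in\Theta^*$. To get it I would sum the displayed identity over $x_{1:n}\in\hat{\theta}_n^{-1}(\theta)$: the left-hand side sums to $\prob(\mathbf{x}_{1:n}\in\hat{\theta}_n^{-1}(\theta)\mid\hat{\boldsymbol{\theta}}_n=\theta)=1$, because $\hat{\boldsymbol{\theta}}_n=\theta$ holds exactly when $\mathbf{x}_{1:n}\in\hat{\theta}_n^{-1}(\theta)$; hence $\sum_{x_{1:n}\in\hat{\theta}_n^{-1}(\theta)}P_\theta^{(1:n)}(x_{1:n})=1$. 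Since $P_\theta^{(1:n)}$ is a probability distribution on $X_{1:n}$ of total mass $1$, it must vanish off $\hat{\theta}_n^{-1}(\theta)$, which is the claimed containment. This normalisation argument is the only place any care is needed; everything else is bookkeeping.

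With the containment in hand the first assertion is immediate: if $\theta_1\ne\theta_2$ both lie in $\Theta^*$, then $\hat{\theta}_n^{-1}(\theta_1)$ and $\hat{\theta}_n^{-1}(\theta_2)$ are disjoint (preimages of distinct points under the function $\hat{\theta}_n$), and the containments $\supp(P_{\theta_1}^{(1:n)})\subseteq\hat{\theta}_n^{-1}(\theta_1)$ and $\supp(P_{\theta_2}^{(1:n)})\subseteq\hat{\theta}_n^{-1}(\theta_2)$ then force $\supp(P_{\theta_1}^{(1:n)})\cap\supp(P_{\theta_2}^{(1:n)})=\emptyset$.

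For the partition claim it remains to check that $\bigcup_{\theta\in\Theta^*}\supp(P_\theta^{(1:n)})=\supp(\mathbf{x}_{1:n})$ and that each block is non-empty. For the inclusion ``$\subseteq$'': if $x_{1:n}\in\supp(P_\theta^{(1:n)})$ with $\theta\in\Theta^*$, then $x_{1:n}\in\hat{\theta}_n^{-1}(\theta)$ by the containment, so $\prob(\mathbf{x}_{1:n}=x_{1:n})=\prob(\hat{\boldsymbol{\theta}}_n=\theta)\,\prob(\mathbf{x}_{1:n}=x_{1:n}\mid\hat{\boldsymbol{\theta}}_n=\theta)=\prob(\hat{\boldsymbol{\theta}}_n=\theta)\,P_\theta^{(1:n)}(x_{1:n})>0$. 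For ``$\supseteq$'': if $\prob(\mathbf{x}_{1:n}=x_{1:n})>0$, set $\theta\defeq\hat{\theta}_n(x_{1:n})$; then $\prob(\hat{\boldsymbol{\theta}}_n=\theta)\ge\prob(\mathbf{x}_{1:n}=x_{1:n})>0$ so $\theta\in\Theta^*$, and $P_\theta^{(1:n)}(x_{1:n})=\prob(\mathbf{x}_{1:n}=x_{1:n}\mid\hat{\boldsymbol{\theta}}_n=\theta)=\prob(\mathbf{x}_{1:n}=x_{1:n})/\prob(\hat{\boldsymbol{\theta}}_n=\theta)>0$. Finally, for each $\theta\in\Theta^*$ the positivity of $\prob(\hat{\boldsymbol{\theta}}_n=\theta)$ means some $x_{1:n}\in\hat{\theta}_n^{-1}(\theta)$ has positive probability, and the same computation gives $P_\theta^{(1:n)}(x_{1:n})>0$, so the block is non-empty. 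I do not expect any genuine obstacle; the proof is a short consequence of Lemma~\ref{L:SMML}.
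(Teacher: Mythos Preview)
Your proof is correct and follows essentially the same strategy as the paper: both invoke Lemma~\ref{L:SMML} to relate $\supp(P_\theta^{(1:n)})$ to the preimage $\hat{\theta}_n^{-1}(\theta)$, and then use disjointness of preimages under a function. The only cosmetic difference is that the paper argues a biconditional $\hat{\theta}_n(x_{1:n})=\theta\iff P_\theta^{(1:n)}(x_{1:n})>0$ directly from the identity, whereas you obtain the needed containment $\supp(P_\theta^{(1:n)})\subseteq\hat{\theta}_n^{-1}(\theta)$ via the clean normalisation argument (summing both sides to $1$); your route is arguably more explicit, but the content is the same.
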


\begin{proof}
For any $\theta\in\Theta$ if $\prob(\hat{\boldsymbol{\theta}}_n\!=\!\theta)\!>\!0$ and $I_2(\hat{\theta}_n)=0$ then, by Lemma \ref{L:SMML},
\begin{equation*}
\prob(\mathbf{x}_{1:n}=x_{1:n}|\hat{\boldsymbol{\theta}}_n=\theta)=P_{\theta}^{(1:n)}(x_{1:n}).
\end{equation*}
More specifically, for such $\theta$, $\prob(\mathbf{x}_{1:n}\!=\!x_{1:n}|\hat{\boldsymbol{\theta}}_n\!=\!\theta)\!>\!0$ if and only if $P_{\theta_1}^{(1:n)}(x_{1:n})\!>\!0$. But 	$\prob(\mathbf{x}_{1:n}\!=\!x_{1:n}|\hat{\boldsymbol{\theta}}_n\!=\!\theta)\!>\!0$ if and only if $\hat{\theta}_n(x_{1:n})\!=\!\theta$. Therefore, $\hat{\theta}_n(x_{1:n})\!=\theta$ if and only if $P_\theta^{(1:n)}(x_{1:n})>0$.

 Therefore, if $\theta_1,\theta_2\in\Theta$ are distinct, both $\prob(\hat{\boldsymbol{\theta}}=\theta_1)$ and $\prob(\hat{\boldsymbol{\theta}}=\theta_2)$ are positive and $x_{1:n}\in\supp\left(P_{\theta_1}^{(1:n)}\right)\cap\supp\left(P_{\theta_2}^{(1:n)}\right)$, then $\hat{\theta}_n(x_{1:n})=\theta_1$ and $\hat{\theta}_n(x_{1:n})=\theta_2$ contradicting the distinctness of $\theta_1$ and $\theta_2$.

 We conclude that $\supp\left(P_{\theta_1}^{(1:n)}\right)\cap\supp\left(P_{\theta_2}^{(1:n)}\right)=\emptyset$. It follows immediately that $\{\supp\left(P_\theta^{(1:n)}\right):\prob(\hat{\boldsymbol{\theta}}_n=\theta)>0\}$ partitions $\supp(\mathbf{x}_{1:n})$ because for every $x_{1:n}\in\supp(\mathbf{x}_{1:n})$, $\prob(\hat{\boldsymbol{\theta}}_n=\hat{\theta}_n(x_{1:n}))>0$ and $x_{1:n}\in \supp\left(P_{\hat{\theta}_n(x_{1:n})}^{(1:n)}\right)$.
\end{proof} 

We introduce SMML as an example for an estimator class that does not guarantee
consistency even in the case of problems with distinctive likelihoods.
Specifically, we prove the following.

\begin{thm}\label{T:SMMLinc}
The class of SMML estimators is not partially consistent over the class of estimation problems with distinctive likelihoods. In fact there exist estimation problems with distinctive likelihoods on which all SMML estimators are inconsistent.
\end{thm}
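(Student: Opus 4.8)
The plan is to exhibit one explicit large sample estimation problem $T=(\boldsymbol{\theta},\mathbf{x})$ which has distinctive likelihoods but on which every SMML estimator is forced to be a data-independent sequence of point estimates converging to a value outside $\Theta$, hence inconsistent at every parameter. The lever is Corollary~\ref{C:SMML}: if the problem is built so that the infimum in Definition~\ref{D:SMML} equals $0$ and is attained for every $n$, then (since $I_2\ge 0$ always by Lemma~\ref{L:SMML}) every SMML estimator $(\hat{\theta}_n)$ has $I_2(\hat{\theta}_n)=0$, so Corollary~\ref{C:SMML} says $\{\supp(P_\theta^{(1:n)}):\prob(\hat{\boldsymbol{\theta}}_n=\theta)>0\}$ partitions $\supp(\mathbf{x}_{1:n})$, and Lemma~\ref{L:SMML} forces the conditional distributions of the data given the estimate to equal the corresponding $P_\theta^{(1:n)}$ exactly.

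The construction I propose is the following. Let $\Theta=\{1/m:m\in\setN\}$, $X_n=\{0,1\}$ for all $n$, and $\Pi(1/m)=2^{-m}$. Fix pairwise distinct $q^{(m)}\in(\tfrac14,\tfrac34)$, set $\bar q_i=(\sum_{j\le i}2^{-j}q^{(j)})/(\sum_{j\le i}2^{-j})$, and let $M$ be the process in which $\mathbf{x}_1$ is a fair coin and, independently, $\mathbf{x}_i\sim\mathrm{Bern}(\bar q_{i-1})$ for $i\ge 2$. Declare that, conditioned on $\boldsymbol{\theta}=1/m$, the $\mathbf{x}_i$ are independent, distributed as under $M$ for $i\le m$ and as $\mathrm{Bern}(q^{(m)})$ for $i>m$; pick the $q^{(m)}$ generically, so that every pair of Bernoulli parameters that needs to differ does, which keeps the $P_{1/m}$ distinct and makes $\supp(P_{1/m}^{(1:n)})=\{0,1\}^n$ for all $m,n$. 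A short induction on $n$ --- each step reducing to the fact that the $\Pi$-weighted average of $q^{(1)},\dots,q^{(i)}$ is $\bar q_i$ --- shows the Bayesian marginal $\prob(\mathbf{x}_{1:n}=\cdot)$ equals $M^{(1:n)}$; hence $P_{1/m}^{(1:n)}=\prob(\mathbf{x}_{1:n}=\cdot)$ exactly when $m\ge n$, and $P_{1/m}^{(1:n)}\ne\prob(\mathbf{x}_{1:n}=\cdot)$ when $m<n$.

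With $T$ so defined, the SMML side is bookkeeping. By Corollary~\ref{C:SMML} two distinct estimate values of positive probability would need disjoint supports, but every $\supp(P_{1/m}^{(1:n)})$ is the whole cube, so an $I_2=0$ estimator is constant at each stage $n$; Lemma~\ref{L:SMML} then forces that constant $\theta$ to satisfy $P_\theta^{(1:n)}=\prob(\mathbf{x}_{1:n}=\cdot)$, i.e.\ $\theta=1/m$ with $m\ge n$. Since $\hat{\theta}_n\equiv 1/n$ has $H(\hat{\theta}_n)=0$ and all log-ratios equal to $0$, the value $0$ is attained, so every SMML estimator is of the form $\hat{\theta}_n\equiv 1/m(n)$ with $m(n)\ge n$; thus $\hat{\theta}_n(\mathbf{x}_{1:n})=1/m(n)\le 1/n\to 0$ regardless of the data, and since $0\notin\Theta$ while each $1/k$ is isolated in $\Theta$, $\prob(\hat{\boldsymbol{\theta}}_n\in\{1/k\}\mid\boldsymbol{\theta}=1/k)\to 0$ for every $k$, so no SMML estimator is consistent on $T$. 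For distinctive likelihoods, fix $\boldsymbol{\theta}=1/m$ and take the neighbourhood $U=\{1/m\}$: the ratio $P_{1/m'}^{(1:n)}(\mathbf{x}_{1:n})/P_{1/m}^{(1:n)}(\mathbf{x}_{1:n})$ is a product over coordinates of factors equal to $1$ for $i\le\min(m,m')$ and otherwise $\log$-bounded with conditional mean $-D_{\text{KL}}$ between two genuinely distinct Bernoulli laws (so bounded away from $0$); the product therefore has negative drift of order $-(n-m)$, and a Hoeffding bound together with a union bound over the at most $n$ distinct competitor likelihoods at stage $n$ and Borel--Cantelli give $\sup_{\theta'\notin U}P_{\theta'}^{(1:n)}(\mathbf{x}_{1:n})/\sup_{\tilde\theta\in U}P_{\tilde\theta}^{(1:n)}(\mathbf{x}_{1:n})\to 0$ almost surely, which is more than Definition~\ref{D:distlike} requires.

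The step I expect to cost the most is finding the construction and checking it, because its two demands fight each other: distinctive likelihoods wants the observations to identify $\boldsymbol{\theta}$ uniformly, whereas forcing $I_2$ down to $0$ through a rigid, almost-trivial partition wants ``catch-all'' parameters that agree with the Bayesian mixture of the rest --- and any parameter that were \emph{globally} such a mixture would itself violate distinctive likelihoods. The reconciliation here is to let the catch-all parameter $1/m$ mimic the marginal only on an initial block of length $m$ and then betray itself through the tail $\mathrm{Bern}(q^{(m)})$; verifying that this yields a legitimate coherent family of processes, that the marginal is exactly $M$, and that the tails genuinely restore the uniform likelihood domination needed in Definition~\ref{D:distlike} is the real content. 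Everything downstream of Corollary~\ref{C:SMML} is routine.
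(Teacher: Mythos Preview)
Your strategy mirrors the paper's: engineer a problem in which, for every $n$, some $P_\theta^{(1:n)}$ coincides with the marginal $\prob(\mathbf{x}_{1:n}=\cdot)$, so that a constant estimator achieves $I_2=0$ and Corollary~\ref{C:SMML} forces every SMML estimator to be data-independent and divergent. But your specific construction does not deliver this.

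The claim that the Bayesian marginal equals the product measure $M^{(1:n)}$ is false for $n\ge 4$. The inductive step you sketch needs
\[
\sum_{m\le n-1}q^{(m)}\,\prob\!\left(\boldsymbol{\theta}=\tfrac{1}{m}\,\middle|\, \mathbf{x}_{1:n-1},\ m\le n-1\right)=\bar q_{n-1}
\]
for every $x_{1:n-1}$, i.e.\ that the \emph{posterior}-weighted average of $q^{(1)},\dots,q^{(n-1)}$ equal the \emph{prior}-weighted average $\bar q_{n-1}$. This holds at $n\le 3$ only by accident (because $P_{1/1}^{(1:2)}=P_{1/2}^{(1:2)}$, so the posterior on $\{1,2\}$ is still the prior); once the small-$m$ likelihoods separate it breaks. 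Concretely, with $x_3=x_4=1$ a direct computation gives
\[
\prob(\mathbf{x}_{1:4}=x_{1:4})-M^{(1:4)}(x_{1:4})=\frac{(q^{(1)}-q^{(2)})^{2}}{6}\, M^{(1)}(x_1)M^{(2)}(x_2),
\]
which is nonzero whenever $q^{(1)}\ne q^{(2)}$. The obstruction is structural: a nontrivial mixture of distinct product measures is never itself a product measure, so no fixed product $M$ can equal the marginal of your model. Consequently no $P_{1/m}^{(1:n)}$ equals the marginal for $n\ge 4$, your constant estimators do not have $I_2=0$, and Corollary~\ref{C:SMML} yields nothing.

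The paper resolves exactly this tension by \emph{abandoning conditional independence on the mimicking block}: for $i<\theta$ it sets
\[
\prob(\mathbf{x}_i=\cdot\mid\boldsymbol{\theta}=\theta,\mathbf{x}_{1:i-1})=\prob(\mathbf{x}_i=\cdot\mid\boldsymbol{\theta}\le i,\mathbf{x}_{1:i-1}),
\]
a self-consistent recursive definition that builds dependence on past observations directly into each $P_\theta$ and thereby reproduces the actual marginal exactly, not a product surrogate. That is the missing idea in your attempt; without it the $I_2=0$ certificate is unavailable and the SMML analysis cannot get started.
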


\begin{proof}
Let $T=(\boldsymbol{\theta},\mathbf{x})$ be a large sample estimation problem, with $\Theta=\setN$, and $X_i=\{0,1,2\}$ for all $i\in\setN$. Let the prior be defined by the probabilities $\prob(\boldsymbol{\theta}=\theta)=2^{-\theta}$, and the probabilities for the observations be given by:
\begin{equation*}
\prob\left(\mathbf{x}_n=x_n|\boldsymbol{\theta}=\theta,\mathbf{x}_{1:n-1}=x_{1:n-1}\right)=\left\{\begin{array}{ll}
\frac{1}{\theta} & \text{if }n> 2\lceil\frac{\theta}{2}\rceil, x_n=1\\[2pt]
\frac{\theta-1}{\theta} & \text{if }n>2\lceil\frac{\theta}{2}\rceil, x_n=0\\[2pt]
1 & \text{if }n\in\{\theta,2\lceil\frac{\theta}{2}\rceil\}, x_n=2\\[2pt]
p(n, x_n, x_{1:n-1}) & \text{if }n<\theta\\[2pt]
0 & \text{otherwise}
\end{array}
\right.
\end{equation*}
Where
\begin{equation}\label{eq:postp}
p(n, x_n,x_{1:(n-1)})=\prob(\mathbf{x}_n=x_n|\boldsymbol{\theta}\leq n,\mathbf{x}_{1:(n-1)}=x_{1:(n-1)}).
\end{equation}

The distribution of $\mathbf{x}_n$ conditioned on $\boldsymbol{\theta}=\theta$ for $n,\theta\leq 10$ is represented in Table \ref{Table:SMML}. 
\renewcommand{\arraystretch}{1.3}
\newcolumntype{C}[1]{>{\centering\arraybackslash}p{#1}}
\newlength{\mx}
\settowidth{\mx}{$p(3,\cdot, \mathbf{x}_{1:2})$}
\begin{table}[bh]
	\centering
	\resizebox{\textwidth}{!}{%
		\begin{tabular}{|c|c|C{\mx}C{\mx}C{\mx}C{\mx}C{\mx}C{\mx}C{\mx}C{\mx}C{\mx}C{\mx}}
			\cline{3-12}
			\multicolumn{1}{c}{}&& \multicolumn{10}{c}{n}\\
			\cline{3-12}
			\multicolumn{1}{c}{}&& 1 & 2 & 3 & 4 & 5 & 6 & 7 & 8 & 9 & 10\\
			\hline
			\multirow{10}{*}{$\theta$} &  1 & \textbf{2} & \textbf{2} & \textbf{1} & \textbf{1} & \textbf{1} & \textbf{1} & \textbf{1} & \textbf{1} & \textbf{1} & \textbf{1}\\
			& 2 & $p(1,\cdot)$ & \textbf{2} & $\frac{1}{2}$ & $\frac{1}{2}$ & $\frac{1}{2}$ & $\frac{1}{2}$ & $\frac{1}{2}$ & $\frac{1}{2}$ & $\frac{1}{2}$ & $\frac{1}{2}$\\
			\cdashline{2-12}
			& 3 & $p(1,\cdot)$ & $p(2,\cdot,\mathbf{x}_1)$ & \textbf{2} & \textbf{2} & $\frac{1}{3}$ & $\frac{1}{3}$ & $\frac{1}{3}$ & $\frac{1}{3}$ & $\frac{1}{3}$ & $\frac{1}{3}$\\
			& 4 & $p(1,\cdot)$ & $p(2,\cdot,\mathbf{x}_1)$ & $p(3,\cdot, \mathbf{x}_{1:2})$ & \textbf{2} & $\frac{1}{4}$ & $\frac{1}{4}$ & $\frac{1}{4}$ & $\frac{1}{4}$ & $\frac{1}{4}$ & $\frac{1}{4}$\\
			\cdashline{2-12}
			& 5 & $p(1,\cdot)$ & $p(2,\cdot,\mathbf{x}_1)$ & $p(3,\cdot, \mathbf{x}_{1:2})$ & $p(4,\cdot,\mathbf{x}_{1:3})$ & \textbf{2} & \textbf{2} & $\frac{1}{5}$ & $\frac{1}{5}$ & $\frac{1}{5}$ & $\frac{1}{5}$ \\
			& 6 & $p(1,\cdot)$ & $p(2,\cdot,\mathbf{x}_1)$ & $p(3,\cdot, \mathbf{x}_{1:2})$ & $p(4,\cdot,\mathbf{x}_{1:3})$  & $p(5,\cdot,\mathbf{x}_{1:4})$ & \textbf{2} & $\frac{1}{6}$ & $\frac{1}{6}$ & $\frac{1}{6}$ & $\frac{1}{6}$\\
			\cdashline{2-12}
			& 7 & $p(1,\cdot)$ & $p(2,\cdot,\mathbf{x}_1)$ & $p(3,\cdot, \mathbf{x}_{1:2})$ & $p(4,\cdot,\mathbf{x}_{1:3})$  & $p(5,\cdot,\mathbf{x}_{1:4})$ & $p(6,\cdot,\mathbf{x}_{1:5})$ & \textbf{2} & \textbf{2} & $\frac{1}{7}$ & $\frac{1}{7}$ \\
			& 8 & $p(1,\cdot)$ & $p(2,\cdot,\mathbf{x}_1)$ & $p(3,\cdot, \mathbf{x}_{1:2})$ & $p(4,\cdot,\mathbf{x}_{1:3})$  & $p(5,\cdot,\mathbf{x}_{1:4})$ & $p(6,\cdot,\mathbf{x}_{1:5})$ & $p(7,\cdot,\mathbf{x}_{1:6})$ & \textbf{2} & $\frac{1}{8}$ & $\frac{1}{8}$\\
			\cdashline{2-12}
			& 9 & $p(1,\cdot)$ & $p(2,\cdot,\mathbf{x}_1)$ & $p(3,\cdot, \mathbf{x}_{1:2})$ & $p(4,\cdot,\mathbf{x}_{1:3})$  & $p(5,\cdot,\mathbf{x}_{1:4})$ & $p(6,\cdot,\mathbf{x}_{1:5})$ & $p(7,\cdot,\mathbf{x}_{1:6})$ & $p(8,\cdot,\mathbf{x}_{1:7})$ & \textbf{2} & \textbf{2} \\
			& 10 & $p(1,\cdot)$ & $p(2,\cdot,\mathbf{x}_1)$ & $p(3,\cdot, \mathbf{x}_{1:2})$ & $p(4,\cdot,\mathbf{x}_{1:3})$  & $p(5,\cdot,\mathbf{x}_{1:4})$ & $p(6,\cdot,\mathbf{x}_{1:5})$ & $p(7,\cdot,\mathbf{x}_{1:6})$ & $p(8,\cdot,\mathbf{x}_{1:7})$ & $p(9,\cdot,\mathbf{x}_{1:8})$ & \textbf{2}\\
			\cdashline{2-12}
	\end{tabular}}
	\caption{An aid to the visualisation of the distribution of $\mathbf{x}_n$ given $\boldsymbol{\theta}=\theta$. The bold entries in the table show the values of $\mathbf{x}_n$ which have probability 1 when conditioned on $\boldsymbol{\theta}=\theta$. The non-bold numbers in the table are used to represent that $\mathbf{x}_n$ is a Bernoulli random variable with $\prob(\mathbf{x}_n=1)$ given by the entry. The entries below the diagonal all have distributions given by \eqref{eq:postp}. In general, unlike the other entries in the table, the entries below the diagonal depend on the actual values of earlier observations, and can be calculated as a posterior weighted mixture of the distributions for lower values of $\theta$. Note rows are grouped into pairs by the dashed lines, according to the value of $\lceil\theta/2\rceil$. }
	\label{Table:SMML}
\end{table}

To give some intuition for this problem, consider that the distribution of $\mathbf{x}_{1:(\theta-1)}$ when conditioned on $\boldsymbol{\theta}=\theta$ is by construction identical to its unconditional distribution, by the definition of the function $p$. Note in particular (since it will be important later) that this implies that for any $A,B\subseteq X$:
\begin{equation}\label{eq: condmarg}
\prob(\mathbf{x}_{1:n}\in A|\mathbf{x}_{1:n}\in B)=\prob(\mathbf{x}_{1:n}\in A|\text{$\mathbf{x}_{1:n}\in B$ and $\boldsymbol{\theta}\leq n$}).
\end{equation}
Further, regarding the distribution of $\mathbf{x}$ conditioned on $\boldsymbol{\theta}=\theta$, note that with probability 1 $\mathbf{x}_\theta=2$, and if $\theta$ is odd, then also $\mathbf{x}_{\theta+1}=2$. Finally, if $\theta$ is even the conditional distribution of $\mathbf{x}_{(\theta+1):\infty}$  (or $\mathbf{x}_{(\theta+2):\infty}$ if $\theta$ is odd), is the distribution of a Binomial process with probability parameter $\frac{1}{\theta}$.

This example has been specifically constructed so that the SMML estimates can be calculated using Lemma \ref{L:SMML}, since the problem set-up allows for estimators whose excess expected message length is 0. Consider in particular  the estimator $\hat{\theta}_n(x_{1:n})=n+1$. Then for $n\in\setN$ and $x_{1:n}\in X_{1:n}$,  $\prob(\mathbf{x}_{1:n}=x_{1:n}|\hat{\boldsymbol{\theta}}_n=n+1)=\prob(\mathbf{x}_{1:n}=x_{1:n})=P_{n+1}^{(1:n)}(x_{1:n})$. Therefore, by Lemma \ref{L:SMML}, $\hat{\theta}$ has 0 excess expected message length for all $n\in\setN$, and is therefore a strict minimum message length estimator. Indeed any estimator satisfying
\begin{equation} 
\text{$\hat{\theta}_n(x_{1:n})=f(n)$, where for all $n>2$, $f(n)>n$}\label{eq:thetaf}
\end{equation}
 is an SMML estimator. All such estimators diverge as $n$ goes to infinity, and so are inconsistent.

We show further that there exists \emph{no} consistent SMML estimator on $T$ by showing that the estimators satisfying \eqref{eq:thetaf} are the only SMML estimators.

Suppose $\hat{\theta}$ is an SMML estimator not satisfying \eqref{eq:thetaf}, and suppose $n^*\in\setN$ and $\theta^*\in\Theta$ are such that $\theta^*\leq n^*$ and $\prob(\hat{\boldsymbol{\theta}}_n=\theta^*)>0$. Because there exist estimators with 0 excess expected message length, $\hat{\theta}$, as an SMML estimator, must also have 0 excess expected message length.  Note that since $\theta^*\leq n^*,$ $\supp(P_{\theta^*}^{(1:n)})\subsetneq\supp(\mathbf{x}_{1:n})$ (since for instance, if $x_{\theta^*}\ne 2$ then $x_{1:n}\notin\supp(P_{\theta^*}^{(1:n)})$). Note also that for any $\theta>n^*$ it holds that $\supp(P_\theta^{(1:n)})=\supp(\mathbf{x}_{1:n^*})$ and $\supp(\mathbf{x}_{1:n})\cap\supp\left(P_{\theta^*}^{(1:n)}\right)=\supp\left(P_{\theta^*}^{(1:n)}\right)\ne\emptyset$ and so, by Corollary \ref{C:SMML},
\begin{equation}
\hat{\theta}_{n^*}(x_{1:n^*})\leq n^*\label{eq:lessthann}
\end{equation} for all $x_{1:n^*}\in\supp(\mathbf{x}_{1:n^*})$. 

An estimator with 0 excess expected message length satisfying \eqref{eq:lessthann} is impossible due to the way in which the supports of the distributions for values of $\theta<n^*$ overlap. In particular consider $x^{(1)}_{1:n^*},x^{(2)}_{1:n^*}\in X_{1:n^*}$ where $x^{(1)}_{1:2}=x^{(2)}_{1:2}=\langle 2,2\rangle$ and $x^{(1)}_{3:n^*}=\langle 1,1,\ldots\rangle$ while $x^{(2)}_{3:n^*}=\langle 0,0,\ldots\rangle$. Amongst members of $\Theta$ less than or equal to $n$ the only values with positive likelihood given $x^{(1)}_{1:n^*}$ are $1$ and $2$, while the only value with positive likelihood given $x^{(2)}_{1:n^*}$ is $2$.  Thus, if we let $\hat{\theta}_n^{-1}(\theta)\defeq\{x_{1:n}\in X_{1:n}|\hat{\theta}_n(x_{1:n})=\theta\}$, then again by Corollary \ref{C:SMML}, $\hat{\theta}_{n^*}(x^{(1)}_{1:n^*})=\hat{\theta}_{n^*}(x^{(2)}_{1:n^*})=2$ and $\hat{\theta}_{n^*}^{-1}(2)=\supp(P_2^{(1:n^*)})=\{x_{1:n^*}\in X_{1:n^*}|x_{1:2}=\langle 2,2\rangle\text{ and for all }i>2, x_i\in\{0,1\}\}$. This implies that $\hat{\theta}_{n^*}(\mathbf{x}_{1:n^*})=2$ and $\boldsymbol{\theta}\leq n^*$ if and only if $\boldsymbol{\theta}\in\{1,2\}$.  Therefore using \eqref{eq: condmarg},
\begin{align*}
	&\prob(\mathbf{x}_{1:n^*}=x^{(1)}_{1:n^*}|\hat{\theta}_{n^*}(\mathbf{x}_{1:n^*})=2) \\
&\qquad\qquad=\prob(\mathbf{x}_{1:n^*}=x^{(1)}_{1:n^*}|\text{$\hat{\theta}_{n^*}(\mathbf{x}_{1:n^*})=2$ and $\boldsymbol{\theta}\leq n^*$})\\
&\qquad\qquad=\prob(\mathbf{x}_{1:n^*}=x^{(1)}_{1:n^*}|\text{$\hat{\theta}_{n^*}(\mathbf{x}_{1:n^*})=2$ and $\boldsymbol{\theta}\in\{1,2\}$})\\
&\qquad\qquad=\prob(\mathbf{x}_{1:n^*}=x^{(1)}_{1:n^*}|\boldsymbol{\theta}\in\{1,2\})\\
&\qquad\qquad=\frac{\prob(\boldsymbol{\theta}=1)P_1^{(1:n^*)}\left(x^{(1)}_{1:n^*}\right)+\prob(\boldsymbol{\theta}=2)P_2^{(1:n^*)}\left(x^{(1)}_{1:n^*}\right)}{\prob(\boldsymbol{\theta}\in\{1,2\})}\\
&\qquad\qquad=\frac{\prob(\boldsymbol{\theta}=1)+\prob(\boldsymbol{\theta}=2)\cdot 2^{2-n^*}}{\prob(\boldsymbol{\theta}\in\{1,2\})}\\
&\qquad\qquad=\frac{2}{3}+\frac{1}{3\cdot 2^{n^*-2}}\ne2^{2-n^*} = P_2^{(1:n^*)}(x_{1:n^*}^{(1)}).
\end{align*}
 The last line contradicts the assumption that $I_2(\hat{\theta}_n)=0$ by Lemma \ref{L:SMML}.

We finally show that $T$ has distinctive likelihoods. To this end, it is sufficient to show that for all $\theta\in\Theta$ for $P_\theta$-nearly every $x\in X$: 
\begin{equation*}
\limsup_{n\to\infty}\frac{\sup_{\theta'\in\Theta\setminus\{\theta\}}P_{\theta'}^{(1:n)}(x_{1:n})}{P_\theta(x_{1:n})}<1.
\end{equation*}

Equivalently, we show that both
\begin{equation}\label{Eq:t_low}
\limsup_{n\to\infty}\frac{\sup_{\theta'\le n,\theta'\ne\theta}P_{\theta'}^{(1:n)}(x_{1:n})}{P_\theta(x_{1:n})}<1
\end{equation}
and
\begin{equation}\label{Eq:t_high}
\limsup_{n\to\infty}\frac{\sup_{\theta'> n,\theta'\ne\theta}P_{\theta'}^{(1:n)}(x_{1:n})}{P_\theta(x_{1:n})}<1
\end{equation}
hold true.

For this, let us fix $\theta$, and let us consider only those $x$ that satisfy
for all $n\in\setN$, $x_{1:n}\in\supp(P^{(1:n)}_\theta)$, or, equivalently,
$P^{(1:n)}_\theta(x_{1:n})>0$. This is true for $P_\theta$-nearly
every $x\in X$, so it is enough to prove \eqref{Eq:t_low} and \eqref{Eq:t_high}
under this assumption.

Furthermore, let us consider also only those $n\in\setN$ such that $\theta<n$,
noting that, again, it is enough to prove \eqref{Eq:t_low} and \eqref{Eq:t_high}
under this assumption.

Let $\theta^*$ be the value other than $\theta$ satisfying $\lceil\frac{\theta^*}{2}\rceil=\lceil\frac{\theta}{2}\rceil$.
Among the values that are at most $n$, $\theta^*$ is the only one distinct
from $\theta$ that may potentially have $P_{\theta^*}(x_{1:n})>0$, and for it
we know that
\[
\limsup_{n\to\infty}\frac{P_{\theta^*}(x_{k+1:n})}{P_{\theta}(x_{k+1:n})}=0,
\]
where $k=\max(\theta,\theta^*)$, due to Lemma~\ref{L:Bin_CLR}, because from
$\mathbf{x}_{k+1}$ on, all $\mathbf{x}_i$ values are independent and identically
Bernoulli-distributed given $\boldsymbol{\theta}$.

Because $x_{1:n}$ has been assumed to be in $\supp(P_\theta^{(1:n)})$
for all $n\in\setN$, we know that $P_\theta^{(1:k)}(x_{1:k})>0$. Hence,
\begin{equation*}
 	\limsup_{n\to\infty} \frac{P_{\theta^*}^{(1:n)}(x_{1:n})}{P_\theta^{(1:n)}(x_{1:n})} = \limsup_{n\to\infty} \left(\frac{P_{\theta^*}^{(1:k)}(x_{1:k})}{P_\theta^{(1:k)}(x_{1:k})}\right)\left(\frac{P_{\theta^*}^{((k+1):n)}(x_{(k+1):n})}{P_\theta^{((k+1):n)}(x_{(k+1):n})}\right) = 0,
\end{equation*}
proving \eqref{Eq:t_low}.

We now proceed to show \eqref{Eq:t_high}.

Recall that for any $\theta'>n$, by construction
\[
P_{\theta'}^{(1:n)}(x_{1:n})=\prob(\mathbf{x}_{1:n}=x_{1:n})=\prob(\mathbf{x}_{1:n}=x_{1:n}|\boldsymbol{\theta}\le n).
\]

Combining this with \eqref{Eq:t_low}, we conclude that
\begin{align*}
\limsup_{n\to\infty}\frac{\sup_{\theta'>n}P_{\theta'}^{(1:n)}(x_{1:n})}{P_\theta(x_{1:n})}
&=\limsup_{n\to\infty}\frac{\prob(\mathbf{x}_{1:n}=x_{1:n}|\boldsymbol{\theta}\le n)}{P_\theta^{(1:n)}(x_{1:n})}\\
&=\limsup_{n\to\infty}\sum_{\theta'=1}^{n} \frac{\prob(\boldsymbol{\theta}=\theta')P^{(1:n)}_{\theta'}(x_{1:n})}{\prob(\boldsymbol{\theta}\le n) P_\theta^{(1:n)}(x_{1:n})}\\
&<\limsup_{n\to\infty}\sum_{\theta'=1}^{n} \frac{\prob(\boldsymbol{\theta}=\theta')}{\prob(\boldsymbol{\theta}\le n)}=1,
\end{align*}
proving \eqref{Eq:t_high}, and therefore the entire claim.
\end{proof}

\end{document}